\newcommand{\com}[1]{{\color[rgb]{0.0, 0.5, 0.0} #1}}
\newcommand{\old}[1]{{\color{red} #1}}
\newcommand{\new}[1]{{\color{blue} #1}}
\newtheorem{thm}{Theorem}[section]
\newtheorem{prop}[thm]{Proposition}
\newtheorem{cor}[thm]{Corollary}
\theoremstyle{definition}
\newtheorem{chunk}[thm]{\hspace*{-1.065ex}\bf}
\newtheorem{lem}[thm]{Lemma}
\newtheorem{dfn}[thm]{Definition}
\newtheorem{eg}[thm]{Example}
\newtheorem{ques}[thm]{Question}
\newtheorem{rem}[thm]{Remark}
\theoremstyle{remark}
\newtheorem{claim}{Claim}
\newtheorem*{claim*}{Claim}
\numberwithin{equation}{thm}
\newcommand{\X}{\mathbb{X}}
\newcommand{\cG}{\mathcal{G}}
\newcommand{\fm}{\mathfrak{m}}
\newcommand{\gcx}{\operatorname{gcx}} 
\newcommand{\bR}{\mathbb{R}}
\newcommand{\bN}{\mathbb{N}}
\newcommand{\cC}{\mathcal{C}}
\newcommand{\cY}{\mathcal{Y}}
\newcommand{\End}{\operatorname{End}} 
\def \Tr {\operatorname {Tr}}
\newcommand{\cone}{\textup{cone}}
 \DeclareMathOperator{\HH}{H}
\newcommand{\m}{\mathfrak{m}}
\newcommand{\fn}{\mathfrak{n}}
\newcommand{\comp}{\mathbb{C}}
\def\fm{\mathfrak{m}}
\def\fp{\mathfrak{p}}
\def \Syz {\operatorname{Syz}}
\def\cX{\mathcal{X}}
\def \codim {\operatorname{codim}}
\def\rB{\mathrm{B}}
\def \lX{\mathcal X} 
\def \Y {\mathcal Y}
\def\depth{\operatorname{\mathrm{depth}}}
\def\embdim{\operatorname{\mathrm{embdim}}}
\def\Ext{\operatorname{\mathrm{Ext}}}
\def\Hom{\operatorname{\mathrm{Hom}}}
\def\limsup{\operatorname{\mathrm{limsup}}}
\def\mod{\operatorname{\mathrm{mod}}}
\def\qpd{\operatorname{\mathrm{qpd}}}
\def\Tor{\operatorname{\mathrm{Tor}}}
\def\syz{\Omega}
\def \TF {\operatorname{TF}}  
\def\Spec{\operatorname{\mathrm{Spec}}}
\def \cdim {\operatorname{\mathsf{CI-dim}}}
\newcommand{\RHom}{\operatorname{{\bf{R}}Hom}}
\def\ind{\operatorname{\mathsf{i}}}
\def\Ho{\operatorname{\mathsf{H}}}
\DeclareMathOperator{\CIdim}{\operatorname{\mathsf{CI-dim}}}
\DeclareMathOperator{\Hdim}{\mathsf{H-dim}}
\DeclareMathOperator{\Gdim}{\mathsf{G-dim}}
\DeclareMathOperator{\pd}{\mathsf{pd}}
\DeclareMathOperator{\cx}{\mathsf{cx}}
\DeclareMathOperator{\px}{\mathsf{px}} 
\DeclareMathOperator{\curv}{\mathsf{curv}}
\DeclareMathOperator{\rGdim}{\operatorname{\mathsf{red-G-dim}}}
\DeclareMathOperator{\rpdim}{\operatorname{\mathsf{red-pd}}}
\DeclareMathOperator{\rHdim}{\operatorname{\mathsf{red-H-dim}}}
\DeclareMathOperator{\rCIdim}{\operatorname{\mathsf{red-CI-dim}}}
\DeclareMathOperator{\id}{\operatorname{\mathsf{id}}}
\begin{document} 
\thanks{2020 {\em Mathematics Subject Classification.}13D07, 13D05, 13C13, 13C14, 13C60, 13H10}  
\thanks{{\em Key words and phrases.} complete intersection, Gorenstein ring, (reducible) complexity, (reducing) Gorenstein dimension, (reducing) projective dimension, syzygy, totally reflexive module}   

\baselineskip=15pt
\baselineskip=15pt
\title[Some characterizations of local rings via reducing dimensions ]{Some characterizations of local rings via reducing dimensions}  

\author{Olgur Celikbas}
\address{Olgur Celikbas\\ School of Mathematical and Data Sciences, West Virginia University
Morgantown, WV 26506 U.S.A}
\email{olgur.celikbas@math.wvu.edu}

\author{Souvik Dey}
\address{Souvik Dey\\ Department of Mathematics, University of Kansas, 405 Snow Hall, 1460 Jayhawk Blvd.,
Lawrence, KS 66045, U.S.A.}
\email{souvik@ku.edu}

\author{Toshinori Kobayashi}
\address{Toshinori Kobayashi \\ School of Science and Technology, Meiji University, 1-1-1 Higashi-Mita, Tama-ku, Kawasaki-shi, Kanagawa 214-8571, Japan}
\email{toshinorikobayashi@icloud.com}

\author[Hiroki Matsui]{Hiroki Matsui}
\address{Hiroki Matsui\\ Department of Mathematical Sciences,
Faculty of Science and Technology,
Tokushima University,
2-1 Minamijosanjima-cho, Tokushima 770-8506, JAPAN}
\email{hmatsui@tokushima-u.ac.jp}

\thanks{Kobayashi was partly supported by JSPS Grant-in-Aid for JSPS Fellows 21J00567. Matsui was partly supported by JSPS Grant-in-Aid for Early-Career Scientists 22K13894} 

\maketitle

\begin{abstract}
In this paper we study homological dimensions of finitely generated modules over commutative Noetherian local rings, called reducing homological dimensions. We obtain new characterizations of Gorenstein and complete intersection local rings via reducing homological dimensions. For example, we extend a classical result of Auslander and Bridger, and prove that a local ring is Gorenstein if and only if each finitely generated module over it has finite reducing Gorenstein dimension. Along the way, we prove various connections between complexity and reducing projective dimension of modules.  
\end{abstract}

\section{introduction}

Throughout all rings, usually denoted by $R$ or $S$, are assumed to be commutative, Noetherian, and local and all modules are assumed to be finitely generated. Denote by $\mod R$ the category of $R$-modules.  

Homological dimensions such as the projective dimension $\pd_R$, the Gorenstein dimension $\Gdim_R$, and the complete intersection dimension $\cdim_R$ are invariants that assign an element of $\bN \cup \{\infty, -\infty\}$ to an isomorphism class of $R$-modules. 
An $R$-module $M$ with $\pd_R(M) < \infty$ (resp. $\Gdim_R(M) < \infty$, resp. $\cdim_R(M)<\infty$) has the similar property with the modules over regular (resp. Gorenstein, resp. complete intersection) rings.
The most important property is the following theorem, which gives a characterization of local rings via such homological dimensions.

\begin{thm}\label{clas} \cite[Th\'eor\`eme 3]{Ser}\cite[(1.4.9)]{AGP} \cite[Theorem 4.20]{AB}
Let $(R,\fm,k)$ be a local ring.
The following are equivalent:
\begin{enumerate}[\rm(i)]
\item
$R$ is regular (resp. Gorenstein, resp. complete intersection)
\item
$\pd_R(M) < \infty$ (resp. $\Gdim_R(M) < \infty$, resp. $\cdim_R(M) < \infty$) for each $R$-module $M$.
\item
$\pd_R(k) < \infty$ (resp. $\Gdim_R(k) < \infty$, resp. $\cdim_R(k) < \infty$)
\end{enumerate}
\end{thm}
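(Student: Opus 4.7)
The plan is to establish each of the three parallel equivalences (regular/$\pd_R$, Gorenstein/$\Gdim_R$, complete intersection/$\cdim_R$) separately. In every case the implication (ii) $\Rightarrow$ (iii) is immediate by specializing $M = k$, so the real work lies in (i) $\Rightarrow$ (ii) and (iii) $\Rightarrow$ (i).

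For (i) $\Rightarrow$ (ii), I would tailor the argument to each homological dimension. If $R$ is regular, the Koszul complex on a regular system of parameters gives a finite free resolution of $k$, and more generally every $R$-module has projective dimension at most $\dim R$, so $\pd_R(M) < \infty$. If $R$ is Gorenstein of dimension $d$, the $d$-th syzygy $\syz^d M$ of any module $M$ is maximal Cohen--Macaulay, and over a Gorenstein ring every maximal Cohen--Macaulay module is totally reflexive (since $\Ext^i_R(N,R) = 0$ for $i > 0$ whenever $\depth N = \depth R$ and $R$ has finite injective dimension); hence $\Gdim_R(M) \le d$. If $R$ is a complete intersection, Cohen's structure theorem writes $\widehat R = Q/(\underline f)$ for a regular local ring $Q$ and a $Q$-regular sequence $\underline f$, producing a quasi-deformation $R \to \widehat R \leftarrow Q$ with $Q$ regular that witnesses $\cdim_R(M) < \infty$ for every $M$.

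For (iii) $\Rightarrow$ (i), the regular case is Serre's theorem: if $\pd_R(k) < \infty$, then the minimal free resolution of $k$ has length $\depth R$ by the Auslander--Buchsbaum formula, and one argues that its first differential encodes a generating set of $\fm$ forming a regular sequence. The Gorenstein case rests on the Auslander--Bridger formula together with the fact that a totally reflexive module $G$ satisfies $\Ext^i_R(G,R) = 0$ for all $i > 0$: if $\Gdim_R(k) < \infty$, then some syzygy of $k$ is totally reflexive, forcing $\Ext^i_R(k,R) = 0$ for $i \gg 0$; by Bass's criterion this gives $\id_R(R) < \infty$, i.e.\ $R$ is Gorenstein. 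The complete intersection case, due to Avramov--Gasharov--Peeva, uses a quasi-deformation $R \to R' \leftarrow Q$ witnessing $\cdim_R(k) < \infty$ and a rather delicate descent argument through $R'$ and $Q$ to conclude that $R$ itself must be a complete intersection.

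I expect the main obstacles to be the (iii) $\Rightarrow$ (i) implications for Gorenstein and complete intersection rings. The Gorenstein direction requires the full theory of totally reflexive modules and Auslander--Bridger duality, while the complete intersection direction hinges on the intricate machinery of quasi-deformations and their interaction with the Ext-vanishing conditions forced by finite $\cdim_R(k)$. The (i) $\Rightarrow$ (ii) direction for complete intersections is also more subtle than its regular and Gorenstein counterparts, since it demands a single quasi-deformation that simultaneously controls $\cdim_R(M)$ for every finitely generated module $M$, rather than one constructed individually for each $M$.
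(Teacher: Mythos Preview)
The paper does not prove this theorem at all: it is stated as classical background with citations to Serre, Auslander--Bridger, and Avramov--Gasharov--Peeva, and is used only to motivate the paper's questions about reducing homological dimensions. So there is no ``paper's own proof'' to compare against.

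Your sketch is a reasonable outline of the classical arguments from those cited sources. One minor remark: your closing worry about needing a single quasi-deformation for all $M$ in the complete intersection case is misplaced. The definition of $\cdim_R$ only requires the existence of \emph{some} quasi-deformation for each individual $M$, and in any case the single diagram $R \to \widehat R \twoheadleftarrow Q$ with $Q$ regular does handle every $M$ at once, since $\pd_Q(M \otimes_R \widehat R) < \infty$ automatically over the regular ring $Q$.
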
 

Reducing homological dimensions were introduced by Araya and Celikbas \cite{CA}, and subsequently with a weaker condition by Araya and Takahashi \cite{at}. 
However finiteness of reducing homological dimensions is a quite weaker condition than finiteness of the corresponding homological dimensions, several known results for modules with finite homological dimensions has been generalized to modules with finite reducing homological dimensions; see \cite{CA, acck, at, redring}. 
Therefore, studying reducing homological dimensions is an important subject in commutative algebra.

Keeping in mind the important property Theorem \ref{clas} for homological dimensions, it is natural to ask that whether we can prove the similar result using reducing homological invariants instead of using homological dimensions.
As we will see in \ref{rprcid} that $\rpdim_R(M) < \infty$ if and only if $\rCIdim_R(M) < \infty$ for an $R$-module $M$, it suffices to consider the reducing projective dimension $\rpdim$ and the reducing Gorenstein dimension $\rGdim$. Thus we consider the following question.

\begin{ques} \label{qci} Let $R$ be a local ring. If $\rpdim_R(M) < \infty$ (resp. $\rGdim_R(M)< \infty$) for each $R$-module $M$, then must $R$ be a complete intersection (resp. Gorenstein)?
\end{ques}

For the reducing Gorenstein dimension, we obtain a complete answer to Question \ref{qci}, which generalizes both \cite[Corollary 3.2]{CA} and Theorem \ref{clas}.

\begin{thm} \rm{(Theorem \ref{Gdimcor})} \label{main0} Let $(R,\m,k)$ be a local ring. Then the following are equivalent:
\begin{enumerate}[\rm(i)]
\item $R$ is Gorenstein. 
\item  $\rGdim_R(M)<\infty$ for each $R$-module $M$.
\item There exists a resolving subcategory $\lX$ of $\mod R$ containing $k$  such that $\rGdim_R(M)<\infty$ for each $M\in \lX$. \qed 

\end{enumerate}
\end{thm}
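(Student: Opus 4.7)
The implications (i)$\Rightarrow$(ii) and (ii)$\Rightarrow$(iii) are essentially formal. For (i)$\Rightarrow$(ii): if $R$ is Gorenstein, Theorem \ref{clas} yields $\Gdim_R(M) < \infty$ for every $M$, and the trivial reducing chain $M_0 = M$ shows that finite $\Gdim$ forces finite $\rGdim$. For (ii)$\Rightarrow$(iii), one takes $\lX = \mod R$, which is visibly resolving and contains $k$.

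The substance lies in (iii)$\Rightarrow$(i). Since $\lX$ is resolving and contains $k$, the hypothesis gives $\rGdim_R(k) < \infty$, and my plan is to upgrade this to $\Gdim_R(k) < \infty$; Theorem \ref{clas} then forces $R$ to be Gorenstein. To this end, I would analyze a reducing chain $k = M_0, M_1, \ldots, M_n$ witnessing the finiteness of $\rGdim_R(k)$, where the last term $M_n$ has finite ordinary Gorenstein dimension. Since $\lX$ is closed under syzygies, extensions, and direct summands and contains $k$, every syzygy of $k$ (and many related modules constructed from $k$) lies in $\lX$; combined with the two-out-of-three property for the resolving class of modules of finite Gorenstein dimension, this should let me back-propagate finiteness of $\Gdim$ through the short exact sequences assembling the reducing chain.

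The main obstacle is that a reducing chain is not, a priori, a sequence of short exact sequences all of whose remaining terms have finite $\Gdim$: the definition of $\rGdim$ permits auxiliary modules in each short exact sequence whose Gorenstein dimension is not directly controlled. The key trick, I anticipate, is to exploit the full strength of the hypothesis that \emph{every} module in $\lX$ has finite $\rGdim$, not merely $k$ itself. The closure properties of $\lX$ (under extensions, kernels of surjections, syzygies, and direct summands) guarantee that the auxiliary modules appearing in the reducing chain for $k$ themselves lie in $\lX$, and hence again have finite $\rGdim$. A well-founded induction on the reducing length should then collapse finiteness of $\rGdim$ along the chain to finiteness of $\Gdim$ at the top, giving $\Gdim_R(k)<\infty$. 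The crux of the argument is thus verifying that the resolving axioms on $\lX$ are precisely what is needed to feed the reducing-chain machinery recursively on itself, so that the reduction process for $k$ never escapes $\lX$ and must terminate at a module of finite $\Gdim$ whose finiteness transfers all the way back to $k$.
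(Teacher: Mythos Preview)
Your argument for (iii)$\Rightarrow$(i) has a genuine gap. The induction on reducing length does show $\Gdim_R(K_1)<\infty$ (since $K_1\in\lX$ by the resolving hypotheses and $\rGdim_R(K_1)\le r-1$), but it stalls at the final step: from
\[
0 \to k^{\oplus a_1} \to K_1 \to \Omega^{n_1}_R k^{\oplus b_1} \to 0
\]
with only $\Gdim_R(K_1)<\infty$ known, you cannot deduce $\Gdim_R(k)<\infty$, because $k$ occupies \emph{both} outer terms. Two-out-of-three for finite $\Gdim$ needs two of the three terms to have finite $\Gdim$; here only the middle one does, and $\Gdim_R(\Omega^{n_1}_R k)<\infty$ is equivalent to $\Gdim_R(k)<\infty$, so there is nothing to leverage. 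No recursion into $\lX$ helps: the obstruction is not in the auxiliary modules $K_i$ but in the shape of the reducing sequence itself. This is exactly the phenomenon the paper flags in Examples~\ref{2.5} and~\ref{2.5(i)}: finiteness of reducing dimensions is not stable in short exact sequences, so you cannot expect to squeeze ordinary $\Gdim$-finiteness out of a single reducing step.

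The paper's route is to back-propagate a \emph{different} property, one tailored to survive this shape. The key is Lemma~\ref{prop-app}: for an extension-closed $\cY$, if $0\to M\to K\to N\to 0$ is exact with $K\in\Omega\cY$ and $N\in\cY$, then $M\in\Omega\cY$. Applied inside a resolving $\cY$ with $\cG(R)\subseteq\cY\subseteq\cC(R)$, starting from $K_r\in\cG(R)=\Omega\cG(R)\subseteq\Omega\cY$ and walking back through the reducing chain, one obtains Theorem~\ref{cor-app}: every $M\in\cY$ with $\rGdim_R(M)<\infty$ lies in $\Omega\cY$. Taking $\cY=\widetilde S_t(R)$ with $t=\depth R$, and using that $\lX$ (being resolving and containing $k$) contains $\Omega^n_Rk$ and hence all of $R*\Omega^n_Rk$ for $n\ge t$, one finds $R*\Omega^n_Rk\subseteq\Omega\widetilde S_t(R)$; a short localization argument then upgrades this to $R*\Omega^n_Rk\subseteq\Syz_{t+1}(R)$, and \cite[5.4]{restf} forces $R$ to be Gorenstein. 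The moral: ``being a syzygy of something in $\cY$'' propagates leftward through reducing sequences, whereas ``finite $\Gdim$'' does not.
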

\noindent
Here, 
a full subcategory $\lX$ of $\mod R$ is said to be {\it resolving} if it is closed under taking direct summands, extensions, kernels of epimorphisms, and containing $R$. The reason why a resolving subcategory appears in (iii) is that finiteness of reducing homological dimension is ill-behaved with respect to short exact sequences (see Example \ref{2.5} and Example \ref{2.5(i)} for instance). In fact, our result Theorem \ref{Gdimcor} contains more than Theorem \ref{main0} as it also shows that $R$ is Gorenstein if $\rGdim_R(\Tr_R \syz^n_R k)<\infty$ for some $n\ge \depth R$ (where $\Tr_R(-)$ denotes the Auslander-Bridger transpose).

It is known that each module over a local complete intersection ring of codimension $c$ has finite reducing projective dimension of at most $c$; see \cite{bergh}. In this paper we investigate whether or not the converse of this fact is true, and whether one can obtain a characterization of complete intersection property via reducing projective dimensions. For small $c$, we obtain the following result.  

\begin{thm}\rm{(Theorem \ref{mainci})}\label{main}  Let $(R,\m,k)$ be a local ring and let $c\le 2$. Then the following are equivalent:  
\begin{enumerate}[\rm(i)]
\item $R$ is a complete intersection of codimension at most $c$.
\item $\rpdim_R(M) \le c$ for each  $R$-module $M$. 
\item $R$ is Gorenstein and $\rpdim_R(k) \le c$. 
  \qed  
\end{enumerate}   
\end{thm}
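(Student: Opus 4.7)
The plan is to run the cycle $(\mathrm{i}) \Rightarrow (\mathrm{ii}) \Rightarrow (\mathrm{iii}) \Rightarrow (\mathrm{i})$. For $(\mathrm{i}) \Rightarrow (\mathrm{ii})$, I would just cite Bergh's result (mentioned in the paragraph above the theorem): over a complete intersection of codimension at most $c$, every finitely generated module has reducing projective dimension at most $c$. For $(\mathrm{ii}) \Rightarrow (\mathrm{iii})$, the bound $\rpdim_R(k)\le c$ is immediate from specializing (ii) to $M=k$. To get that $R$ is Gorenstein, I would observe that any reducing sequence witnessing finiteness of $\rpdim_R(M)$ also witnesses finiteness of $\rGdim_R(M)$, since finite projective dimension implies finite Gorenstein dimension. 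Hence $\rGdim_R(M)<\infty$ for every $M\in\mod R$, and Theorem \ref{main0} (proven earlier in the paper) delivers that $R$ is Gorenstein.

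The real content is $(\mathrm{iii}) \Rightarrow (\mathrm{i})$. My strategy is to reduce to a complexity bound, namely to show
\[
\cx_R(k)\ \le\ c.
\]
Once this is in hand, a theorem of Gulliksen applied to the Gorenstein ring $R$ forces $R$ to be a complete intersection of codimension $\cx_R(k)\le c$, as desired. The cases $c=0$ and $c=1$ are the easiest entry points: if $\rpdim_R(k)=0$, then $\pd_R(k)<\infty$ and $R$ is regular; if $\rpdim_R(k)\le 1$, the single reduction step places $k$ in a short exact sequence with a syzygy of $k$ and a module of finite projective dimension, and extracting Betti numbers from that sequence gives bounded Betti numbers of $k$, i.e.\ $\cx_R(k)\le 1$, so $R$ is a hypersurface.

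The main obstacle, and the reason for the hypothesis $c\le 2$, is the $c=2$ case: one must track Betti numbers through two consecutive reducing short exact sequences involving iterated syzygies, shifts, and direct summands. My plan here is to invoke the connections between complexity and reducing projective dimension developed earlier in the paper (as announced in the abstract), which should give a bound of the form $\cx_R(M)\le \rpdim_R(M)$ under the Gorenstein hypothesis when $\rpdim_R(M)\le 2$; combined with the fact that over a Gorenstein ring the reducing steps can be taken with finite-$\Gdim$ building blocks (so their interaction with Betti numbers is controlled via the Auslander--Bridger formula and standard long exact sequence arguments), this should close the argument. The obstruction to extending to $c\ge 3$ is that bounding complexity through three or more nested reductions requires controlling higher-order Betti number growth, for which the tools developed in the paper do not suffice.
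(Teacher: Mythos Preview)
Your cycle $(\mathrm{i})\Rightarrow(\mathrm{ii})\Rightarrow(\mathrm{iii})\Rightarrow(\mathrm{i})$ and the arguments for the first two implications match the paper. The gap is in $(\mathrm{iii})\Rightarrow(\mathrm{i})$ when $c=2$.

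Your plan there is to invoke the inequality $\cx_R(k)\le\rpdim_R(k)$. But look at the hypothesis of that inequality in the paper (Corollary~\ref{corcxrpd}(i)): it requires $\cx_R(M)<\infty$ \emph{a priori}. Concretely, the underlying Betti-number estimate (Theorem~\ref{corcx}) says that for a reducing sequence $0\to M^{\oplus a}\to K\to\Omega^n M^{\oplus b}\to 0$ with $\cx_R(K)=c$, one gets $\cx_R(M)\le c+1$ only when $a\le b$; when $a>b$ the conclusion is merely ``$\cx_R(M)=\cx_R(K)$ or $\cx_R(M)=\infty$''. Since for $k$ the finiteness of $\cx_R(k)$ is (by Gulliksen) equivalent to $R$ being a complete intersection, your argument is circular: you cannot rule out the $a>b$ branch without already knowing what you are trying to prove.

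The paper breaks this circularity by a dualization trick you do not mention. For a totally reflexive module $M$, applying $(-)^*$ to a reducing sequence swaps the roles of $a$ and $b$ (Lemma~\ref{lem57}), and $\rpdim_R M=\rpdim_R M^*$ (Proposition~\ref{57}). Hence if the original sequence has $a>b$, the dual sequence has exponents in the favorable order, and one concludes $\cx_R(M^*)\le 2$. This yields the dichotomy of Proposition~\ref{c1}: for totally reflexive $M$ with $\rpdim_R(M)\le 2$, either $\cx_R(M)\le 2$ or $\cx_R(M^*)\le 2$. Applied to $M=\Omega^d_R k$ (totally reflexive because $R$ is Gorenstein), one of $\cx_R(\Omega^d_R k)$ or $\cx_R((\Omega^d_R k)^*)$ is at most $2$. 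The first case gives $\cx_R(k)\le 2$ directly; the second requires a separate argument (Proposition~\ref{l4}) showing that finiteness of $\cx_R\big(\Hom_R(\Omega^d_R k,R)\big)$ already forces $R$ to be a complete intersection. This dual branch, and the machinery to handle it, is the essential content of the $c=2$ case and is absent from your proposal.

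Your $c=1$ sketch is closer to correct but still glosses over the same issue: from $0\to k^{\oplus a}\to K\to\Omega^n k^{\oplus b}\to 0$ with $\pd_R K<\infty$, ``extracting Betti numbers'' does not immediately give boundedness unless you first show $a=b$. The paper does this (see \ref{c3} and Proposition~\ref{p1}) by passing to a high syzygy so the middle term is free, and then using that $\Omega^d_R k$ is totally reflexive to compare the number of non-free indecomposable summands on both sides, forcing $a=b$.
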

\noindent
Similar to Theorem \ref{Gdimcor}, Theorem \ref{main} also contains a characterization in terms of $\rpdim_R(\Tr_R \syz^n_R k)$.    

We now briefly describe the structure of the paper. 

In Section \ref{sec 2}, we recall preliminary definitions, notations, and results related to reducing dimensions, and reducible complexity \cite{bergh}.   

In Section \ref{sec 3}, we prove Theorem \ref{Gdimcor} regarding characterization of Gorenstein rings via reducing Gorenstein dimension, part of which was highlighted  above. One of the main ingredients in the proof of this is Theorem \ref{cor-app} which states that if a module $M$ of finite reducing Gorenstein dimension belongs to a resolving subcategory $\lX$ such that $\lX$ contains all totally reflexive modules, and every module in $\lX$ has depth at least the depth of the ring, then $M\in \syz \lX$. We deduce various consequences of Theorem \ref{cor-app} even other than  Theorem \ref{Gdimcor}.


In Section \ref{sec 4}, we prove Theorem \ref{mainci}. Along the way, we  show in Corollary \ref{corcxrpd}, that for a module of finite complexity, the complexity is bounded above by the reducing projective dimension. The proof of this result, and that of the main result Theorem \ref{mainci} of this section heavily relies on a technical result, namely Proposition \ref{corcx}, which gives some connection between complexities of two modules $M$ and $K$ when they fit into an exact sequence of the form $0\to M^{\oplus a}\to K\to \syz^n_R M^{\oplus b}\to 0$.



\section{Definitions and examples}\label{sec 2}

Given a ring $R$, we denote by $(-)^*$ the $R$-dual functor $\Hom_R(-,R)$ of $R$-modules. For an $R$-module $M$ and an integer $i\ge 0$, we denote by $\syz^i_R M$ the $i$-th syzygy module in a minimal free resolution of $M$. Whenever the ring $R$ is clear from the context, we only write $\syz^i M$ in place of $\syz^i_R M$.  


First we recall the definition of complete intersection dimension and its related concepts. For the other homological dimension such as Gorenstein dimension $\Gdim$, we refer the reader to \cite{Gdimbook}.


\begin{chunk} \label{CI} (\textbf{Complete intersection dimension} \cite[1.2]{AGP}) Let $R$ be a ring and let $M$ be an $R$-module.
Define the {\it complete intersection dimension} of $M$ as
$$
\CIdim_R(M) := \inf \{\pd_S(M \otimes_R R') - \pd_S R' \mid R \to R' \twoheadleftarrow S \mbox{ is a quasi-deformation} \}.
$$
Here a diagram $R \to R' \twoheadleftarrow S$ of local ring maps is a {\it quasi-deformation} if $R \to R'$ is flat, $R' \twoheadleftarrow S$ is surjective such that its kernel is generated by an $S$-regular sequence.
Note that, if $R$ is a complete intersection ring, then it follows by definition that $\CIdim_R(M)<\infty$. 

It is shown in \cite[Theorem 1.4]{AGP} that there are inequalities
\begin{equation}\tag{\ref{CI}.1}
\Gdim_R(M) \le \CIdim_R(M) \le \pd_R(M)
\end{equation}
for any $R$-module $M$.
\end{chunk}

\begin{chunk} \label{rcx} (\textbf{Complexity and reducible complexity} \cite[4.2]{av} and \cite{bergh}) Let $R$ be a local ring and let $M$ be an $R$-module.
The \emph{complexity} $\cx_R(M)$ of $M$ is defined by the smallest integer $r\geq 0$ such that  there exists a real number $A$ with $\beta_n(M) \leq A \cdot n^{r-1}$ for all $n\gg 0$, where $\beta_n(M)$ denotes the $n th$ Betti number of $M$. (If there do not exist such integer $r$, we have $\cx_R(M)=\infty$ by convention). Note that $\cx_R(M)=0$ if and only if $\pd_R(M)<\infty$, and $\cx_R(M)\leq 1$ if and only if $M$ has bounded Betti numbers. 

The module $M$ is said to have {\it reducible complexity} if either $\pd_R(M)<\infty$, or $0<\cx_R(M)<\infty$ and there is an integer $r\geq 1$ and short exact sequences of $R$-modules
$$
\{0 \to K_i \to K_{i+1} \to \Omega^{n_i}_R K_i \to 0\}_{i=0}^{r}
$$
where $K_0 = M$, $\pd_R(K_r)<\infty$, and $\cx_R(K_{i+1}) < \cx_R(K_i)$ for each $i=0,1,\ldots, r-1$.

The original definition of reducible complexity \cite[Definition 2.1]{bergh} requires $\depth_R(K_i)=\depth_R(M)$ for each $i$. Note that this condition holds automatically when $R$ is Cohen-Macaulay (see \cite[Remark after Definition 2.1]{bergh}). However, as we do not need this depth condition for our arguments, we do not include it in the definition. 
\end{chunk}

\begin{chunk}(\cite[Proposition 2.2]{bergh})
If an $R$-module has finite complete intersection dimension, then $M$ has reducible complexity. 	\qed
\end{chunk}

Motivated by this definition, Araya-Celikbas introduced the following concept.


\begin{chunk} \label{rdim} (\textbf{Reducing homological dimensions} \cite[Definition 2.1]{CA} and \cite[Definition 2.5]{at}) Let $R$ be a local ring, $M$ be an $R$-module, and let $\Hdim$ be a \emph{homological dimension} of $R$-modules, for example, $\Hdim_R \in \{\pd_R, \CIdim_R, \Gdim_R\}$.  

We write $\rHdim_R(M)<\infty$ provided that 
\begin{enumerate}[\rm(a)]
\item $\Hdim_R(M) < \infty$, or
\item $\Hdim_R(M) = \infty$ and there exist integers $r,a_i,b_i\geq 1$, $n_i\geq 0$, and short exact sequences of $R$-modules of the form 
\begin{equation}\tag{\ref{rdim}.1}
0 \to K_{i-1}^{{\oplus a_i}} \to K_{i} \to \Omega^{n_i}_R K_{i-1}^{{\oplus b_i}} \to 0
\end{equation}
for each $i=1, \ldots r$, where $K_0=M$ and $\Hdim(K_r)<\infty$. 
In this case, we call $\{K_0,  \ldots, K_r\}$ a \emph{reducing $\Hdim$-sequence} of $M$.
\end{enumerate}

The \emph{reducing homological dimension} $\rHdim_R(M)$ of $M$ is defined as follows: If $\Hdim_R(M)=\infty$, we set
$$
\rHdim_R(M):=\inf\{ r\in \bN : \text{there is a reducing $\Hdim$-sequence }  K_0,  \ldots, K_r \text{ of }  M\}
$$
and we set $\rHdim_R(M):=0$ if $\Hdim_R(M)<\infty$. 
We note that our definition is more relaxed from that of \cite[2.1]{CA} and aligns with \cite[2.5]{at} in that we take $n_i\ge 0$ instead of $n_i>0$.

It follows from the inequalities (\ref{CI}.1) that there are inequalities
$$
\rGdim_R(M) \le \rCIdim_R(M) \le \rpdim_R(M)
$$
for any $R$-module $M$.\qed
\end{chunk}

Unlike homological dimensions, finiteness of reducing homological dimensions is not compatible with short exact sequences. The following two examples show that finiteness of reducing homological dimensions are not in general closed under extensions, kernels of epimorphisms or co-kernels of monomorphisms.
\begin{eg}\label{2.5}
Here we give a family of examples (of ring of every dimension) showing finiteness of $\rpdim$ or $\rGdim$ is not in general closed under taking extensions.

Let $(R,\m,k)$ be a local Cohen--Macaulay ring of dimension $d$, and minimal multiplicity, admitting a canonical module $\omega_R$, and assume $k$ is infinite. Then, we have $\m^2=(x_1,\cdots,x_d)\m$. Write $\overline R=R/(x_1,\cdots,x_d)$ which has maximal ideal $\m/(x_1,\cdots,x_d)$ and canonical module $\omega_{\overline R}\cong \omega_R/(x_1,\cdots,x_d)\omega_R$. We have a short exact sequence of $R$ and $\overline R$-modules 
\begin{align*}\label{example}
0\to \m \omega_{\overline R} \to \omega_{\overline R}\to \dfrac{\omega_{\overline R}}{\m \omega_{\overline R}}\to 0
\end{align*}
where the right most module is clearly a $k$-vector space, and the left most module is also a $k$-vector space since $\m^2=(x_1,\cdots,x_d)\m$. Hence both the left and right most modules have finite reducing projective dimension due to \cite[Theorem 1.2]{redring}. However, if $\rGdim_R(\omega_{\overline R})<\infty$, then by \cite[Proposition 3.9. and 3.13]{redring} we obtain $\rGdim_{\overline R}(\omega_{\overline R})\le \rGdim_R(\syz^d_R \omega_{\overline R})+d<\infty$. But then $\overline R$ is Gorenstein by  \cite[Corollary 3.3]{CA}, and hence $R$ is Gorenstein. Consequently, if $R$ is not Gorenstein, then $\rGdim_R(\omega_{\overline R})=\infty=\rpdim_R(\omega_{\overline R})$. Applying \ref{C1}(i) to this same exact sequence and using \cite[3.8 and 3.13]{redring}, we get an exact sequence showing finiteness of $\rpdim$ or $\rGdim$ is not in general closed under co-kernels of monomorphisms. 
\end{eg}

\begin{eg}\label{2.5(i)} Now we give a family of examples (of ring of every positive dimension) showing finiteness of $\rpdim$ or $\rGdim$ is not in general closed under kernels of epimorphisms. Let $(R,\m,k)$ be a local $1$-dimensional Cohen--Macaulay, non-Gorenstein ring admitting a canonical ideal $\omega_R$. Also assume $R$ has minimal multiplicity and  $\End_R(\m)$ is a Gorenstein ring. An explicit example of such a ring is $R=k[[t^e, t^{e+1},\cdots,t^{2e-1}]]$, where $e\ge 3$ is an integer (see \cite[Example 3.13]{gmp}). By \cite[Theorem 5.1]{gmp} and \cite[Theorem 1.4 and Corollary 1.5]{kob}, we get an exact sequence $0\to \omega_R \to \m \to k \to 0$. Now let $S:=R[[x_2,\cdots,x_d]]$, where $d\ge 1$ (if $d=1$, we interpret $S$ as $R$ itself). As $S$ is flat over $R$, we obtain an exact sequence $0\to S\otimes_R \omega_R \to S \otimes_R \m \to S \otimes_R k \to 0$. As $S/\m S\cong k[[x_2,\cdots,x_d]]$ is Gorenstein, so  by \cite[Theorem 3.3.14]{bh} we get $S\otimes_R \omega_R \cong \omega_S$. Hence the exact sequence becomes 
$$
0\to  \omega_S \to S \otimes_R \m \to S \otimes_R k \to 0.
$$  
Now as $R$ has minimal multiplicity, so $\rpdim_R k$ and $\rpdim_R \m$ are finite by \cite[Theorem 1.2 and Corollary 3.13]{redring}. Hence $\rpdim_S (S\otimes_R \m)$ and $\rpdim_S (S\otimes_R k)$ are finite by \cite[Corollary 3.2]{redring}. Now since $S/(x_2,\cdots,x_d)\cong R$ is not Gorenstein, so $S$ is not Gorenstein, and thus $\rpdim_S \omega_S=\rGdim_S \omega_S=\infty$ by \cite[Corollary 3.3]{CA}.  
\end{eg}

\begin{chunk}\label{rprcid}
For an $R$-module $M$, $\rCIdim_R(M)< \infty$ if and only if $\rpdim_R(M)< \infty$.
Indeed, the if part follows from the inequality $\rCIdim_R(M) \le \rpdim_R(M)$.  

Conversely, suppose $\rCIdim_R(M) < \infty$.
If $\rCIdim_R(R) = 0$ i.e., $\CIdim_R(M) < \infty$, then $M$ has reducible complexity and hence it has finite reducing projective dimension from the definition.
If $0<\rCIdim_R(R) < \infty$, then we can take a reducing $\CIdim$-sequence $\{K_0, \ldots, K_r\}$ of $M$.
Then $K_r$ has finite complete intersection dimension and hence it has a finite reducing projective dimension as above. Therefore, we get a reducing $\pd$-sequence of $M$ by connecting the reducing $\CIdim$-sequence $\{K_0, \ldots, K_r\}$ of $M$ and the reducing $\pd$-sequence of $K_r$.
This conclude that $M$ has finite reducing projective dimension.\qed
\end{chunk}

For the rest of this paper, we will concentrate on $\rpdim$ and $\rGdim$.
We will often use the following property from \cite[Proposition 3.8]{redring}.

\begin{chunk} Let $\Hdim=\pd$ or $\Gdim$. Then, for any $R$-module $M$ and free $R$-module $F$, we have $\rHdim(M)=\rHdim(M\oplus F)$. \qed
\end{chunk}


The following relations between the (reducible) complexity and the reducing projective dimension will be obtained in Corollary \ref{corcxrpd}.

\begin{chunk} \label{rcxrpd} Let $R$ be a local ring and let $M$ be an $R$-module. 
\begin{enumerate}[\rm(i)]
\item If $\cx_R(M) < \infty$, then it follows that $\cx_R(M) \le \rpdim_R(M)$.
\item If $M$ has reducible complexity, then $\cx_R(M) = \rpdim_R(M)$. \qed
\end{enumerate}
\end{chunk}


The reducing projective dimension of a module of finite complete intersection dimension is bounded by the codepth of the ring. More precisely, in view of \ref{rcxrpd}, one has:

\begin{chunk} Let $R$ be a local ring and let $M$ be an $R$-module. Assume $\CIdim_R(M)<\infty$. Then,
\begin{enumerate}[\rm(i)]
\item $M$ has reducible complexity; see \cite[Proposition 2.2]{bergh}
\item $\cx_R(M)=\rpdim_R(M)\leq \embdim(R)-\depth(R)$; this follows from part (i), \ref{rcxrpd}, and \cite[5.6]{AGP}.

\item If $\rpdim_R(M)=\embdim(R)-\depth(R)$, then $R$ is a complete intersection ring; this follows from (ii), and \cite[5.6]{AGP}.  \qed
\end{enumerate}
\end{chunk}

It is worth noting that the inequality in \ref{rcxrpd}(i) may fail if the module in question does not have finite complexity; indeed, for a local ring $(R,\m,k)$ with $\m^2=0$, we always have $\rpdim(k)<\infty$ (\cite[Proposition 2.5]{CA}). Similarly, the equality in  \ref{rcxrpd}(ii) may fail if the module in question does not have reducible complexity: the module in the next example has finite complexity, but it does not have finite reducing projective dimension (and hence the module does not have reducible complexity). This shows that, for a module, having finite complexity and having finite reducing projective dimension are independent conditions, in general. 

\begin{eg} Jorgensen and {\c{S}}ega \cite{JS} constructed a local Artinian ring $R$ and an $R$-module $M$ such that $\cx_R(M)<\infty=\Gdim_R(M)$ and $\Ext^i_R(M,R) = 0$ for all $i\geq 1$. It follows that $\rGdim_R(M)=\infty$ as otherwise the vanishing of $\Ext^i_R(M,R)$ forces $M$ to have finite Gorenstein dimension; see \cite[1.3]{CA}. \qed
\end{eg}

\if0
\begin{chunk}\label{wgrc} We say that an $R$-module $M$ of finite complexity has {\it weak generalized reducible complexity} if there are short exact sequences
$$
0 \to K_i^{\oplus a_i} \to K_{i+1} \to \Omega^{n_i} K_i^{\oplus b_i} \to 0 \quad (i=0,1,\ldots, r-1)
$$ , where $a_i,b_i$ are positive integers, and $n_i$ are non-negative integers
such that  $K_0 = M$, $K_r$ has finite projective dimension, $\cx_R(K_{i+1}) < \cx_R(K_i)$.     

Following a similar argument as in the second part of \cite[Theorem 3.6]{at} (where we use Lemma 2.8 instead of Lemma 3.3) we can get that if $M$ has {\it weak generalized reducible complexity}, then $\cx_R M\ge \rpdim_R M$.    
\end{chunk}

\begin{ques}\label{qredcx} If $M$ has {\it weak generalized reducible complexity}, then does $M$ have {\it weak reducible complexity} (see \cite[2.3]{cts}) i.e. does there exist short exact sequences
$$
0 \to K_i \to K_{i+1} \to \Omega^{n_i} K_i \to 0 \quad (i=0,1,\ldots, r-1)
$$ , where $n_i$ are non-negative integers
such that  $K_0 = M$, $K_r$ has finite projective dimension, $\cx_R(K_{i+1}) < \cx_R(K_i)$ ? 
\end{ques}  

\fi



\if0
In the next example, for a given integer $n\geq 1$, we construct a local ring $S$ of dimension $n$ and an $S$-module $M$ such that $M$ has finite reducing projective dimension over $S$, but it does not have reducible complexity over $S$ (hence it does not have finite complexity and does not have finite complete intersection dimension).

\begin{eg}\label{egAC}\com{this is already included in \ref{min}} 
Let $(R, \fm, k)$ be a singular local ring with $\fm^2 = 0$. Then, for an $R$-module $M$, it follows from \cite[]{} that:
$$
\rpdim_R(M) < \infty \Leftrightarrow \rpdim_R(M) \le 1 \Leftrightarrow M \cong R^{\oplus \alpha} \oplus k^{\oplus \beta} \mbox{ for some $\alpha, \beta \ge 0$}.
$$ 
As a specific example, we have $\rpdim_R(k) = 1$ since $k$ has a reducing projective dimension sequence
$$
0 \to k^{\oplus e} \to R \to k \to 0,
$$	
where $e$ is the embedding dimension of $R$.

Next assume $R$ is not Gorenstein, that is, $e\geq 2$. For a given integer $n\geq 1$, we set $S=R[\![x_1, \ldots, x_n]\!]$ and $M=k\otimes_R S$. Then it follows that $\dim(S)=n$ and $M$ does not have reducible complexity over $S$ (since $\cx_S(M)=\cx_R(k)=\infty$). Therefore, one has $\pd_S(M)=\CIdim_S(M)=\infty$. On the other hand, $\rpdim_S(M)=\rpdim_R(k)=1$ since $R \to S$ is a flat ring extension. \qed
\end{eg}
\fi 
\if0
An ideal $I$ of a local ring $R$ is called {\it Ulrich} provided that $I/I^2$ is a free $R/I$-module and $I^2 = qI$ for some parameter ideal $q$ of $R$, where $q \subseteq I \subseteq \fm$; see \cite[]{} for the details. 

\begin{chunk}\label{min} Let $R$ be a Cohen-Macaulay ring and let $I$ be an Ulrich ideal of $R$ that is not a parameter ideal. Then it follows that $\pd_R(R/I)=\infty$ and  $\rpdim_R(R/I) = 1$.  

To observe this, we consider a parameter ideal $q$ of $R$ such that $I^2 = qI$ and $q \subsetneqq I \subseteq \fm$. Then, by \cite[2.3]{GOTWY}(2)(c), we have that $I/q$ is a free $R/I$-module. So the exact sequence $0 \to I/Q \to R/Q \to R/I \to 0$ gives a reducing projective sequence of $R/I$ over $R$. Thus we see that $\rpdim_R(R/I) \le 1$. Now, if $\rpdim_R(R/I) = 0$, or equivalently, $\pd_R(R/I)<\infty$, then, since  $I/I^2$ is a free $R/I$-module, \cite[]{} implies that $I$ is a parameter ideal. Therefore, we conclude that $\rpdim_R(R/I)$ must be one. \qed
\end{chunk}

A Cohen-Macaulay local ring $R$ is said to have {\it minimal multiplicity} provided that the (Hilbert-Samuel) multiplicity of $R$ is one more than the codimension of $R$; see \cite{}. For example, if $\fm^2 = 0$, then $R$ has minimal multiplicity; see \cite{}. Thus the next result generalizes Example \ref{egAC} over rings with infinite residue field.

\begin{chunk} \label{mincor} Let $R$ be a singular Cohen-Macaulay ring with infinite residue field. Assume $R$ has minimal multiplicity. Then it follows $\fm$ is an Ulrich ideal of $R$ that is not a parameter ideal; see \cite[]{}. Therefore, \ref{min} yields that $\pd_R(k)=\infty$ and $\rpdim_R(k) = 1$.  
\end{chunk}

\begin{eg} Let $R=\comp[\![t^3, t^4, t^5]\!]$ or $R=\comp[\![t^4, t^5, t^6, t^7]\!]$. Then $R$ is a singular Cohen-Macaulay local ring which has minimal multiplicity. So it follows that $\pd_R(k)=\infty$ and $\rpdim(k)=1$; see  \ref{mincor}.
\end{eg}


We are not aware of an example of a module in the literature that has (finite) reducing projective dimension more than one. Hence we finish this section by giving an example of a module that has reducing projective dimension two; subsequently, we determine its reducing sequence explicitly.

\begin{chunk} \label{s1} Let $R$ be a local complete intersection ring of codimension two. Then $\rpdim_R(k)\leq 2$; see \ref{compare}. Also, if $\rpdim_R(k)=0$, then $\pd_R(k)<\infty$ and so $R$ is regular. Moreover, if $\rpdim_R(k)=1$, then $R$ is a hypersurface of codimension one; see Proposition \ref{p1}(ii). Therefore, it follows that $\rpdim_R(k)=2$. 
\end{chunk}



\begin{chunk} Let $R=S/(a)$, where $(S, \fn)$ is a one-dimensional local hypersurface ring and $a\in \fn^2$ is a nonzero divisor on $S$
(for example, one can take $S=\comp[\![x,y]\!]/(x^2)$ and $a=y^2$). Then $R$ is an Artinian complete intersection ring of codimension two so that $\rpdim_R(k)=2$ by \ref{s1}. Next we proceed and find a reducing sequence of $k$ over $R$.

Let us first note, by (\ref{lemgood}.2), we have a short exact sequence of $R$-modules as:
\begin{equation}\tag{\ref{s1}.1}
0 \to k \to \fn \otimes_{S}R \to \Omega_R k \to 0.
\end{equation}
As we know $\rpdim_R(k)=2$, it follows from (\ref{s1}.1) that $\rpdim_R(\fn \otimes_{S}R)=1$. So it is enough to find the reducing sequence of $\fn \otimes_{S}R$ over $R$.

As $S$ is a one-dimensional hypersurface ring and $\fn$ is a maximal Cohen-Macaulay $S$-module with no free summand, we have $\fn \cong \Omega^2_S \fn$. Moreover, since $\fn$ is generated by two elements, there is a short exact sequence of $S$-modules of the form:
\begin{equation}\tag{\ref{s1}.2}
0 \to \fn \to S^{\oplus 2} \to \Omega_S \fn\to 0.
\end{equation}
We apply $-\otimes_SR$ to (\ref{s1}.2) and obtain the following exact sequence of $R$-modules:
\begin{equation}\tag{\ref{s1}.3}
0 \to \Tor_1^S(\Omega_S \fn,R) \to  \fn \otimes_S R \to R^{\oplus 2} \to \Omega_S \fn \otimes_S R \to 0.
\end{equation}
As $\pd_S(R)=1$ and $\Omega_S \fn$ is a torsion-free $S$-module, we see that $\Tor_1^S(\Omega_S \fn,R)=0$; see, for example, \cite[]{}. 

Consider the syzygy exact sequence of $\fn$ over $S$:
\begin{equation}\tag{\ref{s1}.4}
0 \to \Omega_S \fn \to S^{\oplus 2} \to \fn \to 0.
\end{equation}
By tensoring (\ref{s1}.4) with $R$ over $S$, we obtain the exact sequence:
\begin{equation}\tag{\ref{s1}.5}
0 \to \Tor_1^S(\fn,R) \to  \Omega_S \fn \otimes_S R \to R^{\oplus 2} \to \fn \otimes_S R \to 0.
\end{equation}
As $\pd_S(R)=1$ and $\fn$ is a torsion-free $S$-module, we see $\Tor_1^S(\fn,R)=0$. Therefore, (\ref{s1}.5) implies that $\Omega_S \fn \otimes_S R \cong \Omega_R (\fn \otimes_S R)$. Now (\ref{s1}.3) yields the exact sequence $0\to \fn \otimes_S R \to R^{\oplus 2} \to \Omega_R (\fn \otimes_S R) \to 0$. Hence, by (\ref{s1}.1), we see that a reducing sequence of $k$ over $R$ is $\{k, \fn \otimes_{S}R, R^{\oplus 2} \}$.

We should note that the reducing projective dimension sequence of $k$ we obtain here is also the sequence that reduces the complexity of $k$; see \cite{}.
\end{chunk} 
\fi

\section{Testing the Gorenstein property via reducing Gorenstein dimension}\label{sec 3}
In this section, we concern reducing Gorenstein dimension and prove the first main theorem, which has been advertised in the introduction.

\begin{chunk} \label{C1}
Let $R$ be a local ring and $0 \to A \to B \to C \to 0$ be a short exact sequence of $R$-modules. Then, the following hold: 
\begin{enumerate}[\rm(i)]
\item There exists a short exact sequence $0\to \syz_R C \to A \oplus F \to B \to 0$ with some free $R$-module $F$. 
\item There exists a short exact sequence $0\to \syz_R B \to \syz_R C \oplus G \to A \to 0$ with some free $R$-module $G$.
\item For each $i\ge 0$, there exists a short exact sequence $0 \to \syz_R^i A \to \syz_R^i B\oplus H_i \to \syz_R^i C \to 0$ with some free $R$-module $H_i$. 
\end{enumerate} 

\begin{proof} (i) and (iii) follow from \cite[Proposition 2.2]{dt}. For part (ii), we consider the following pull-back diagram obtained via the exact sequences $0\to \syz_R B \to F \to B \to 0$ by $0\to A \to B \to C \to 0$:  

$$
\xymatrix{
            &                                          & 0 \ar[d]                       & 0 \ar[d]           &   \\
0 \ar[r] & \Omega_R B \ar[r] \ar@{=}[d] & X \ar[r] \ar[d]             & A \ar[d] \ar[r] & 0 \\
0 \ar[r] & \Omega_R B \ar[r]                       & F \ar[r] \ar[d]             & B \ar[r] \ar[d] & 0 \\
            &                                          & C \ar@{=}[r] \ar[d] & C \ar[d]           &   \\
            &                                          & 0                                 & 0                     &  
}
$$

The middle column yields $X\cong \syz_R C\oplus G$ for some free $R$-module $G$. Hence, the required short exact sequence follows from the upper row. 
\end{proof}
\end{chunk}

\begin{chunk} Recall that $\mod R$ denotes the category of finitely generated $R$-modules. Let $\lX$ be a full and strict subcategory of $\mod R$. Given an integer $n\ge 1$, by $\syz^n \lX$, we denote the subcategory of all $R$-modules $M$ for which there exists an exact sequence $0\to M \to P_{n-1}\to \cdots \to P_0\to N \to 0$, where $N\in \lX$ and each $P_i$ is a finitely generated  projective (=free when $R$ is local) $R$-module. Moreover, we put $\syz^0\lX=\lX$. We always have $\syz^m(\syz^n \lX)=\syz^{m+n} \lX$.  
We also often denote $\syz^n(\mod R)$ by $\Syz_n(R)$.  

We say that $\lX$ is additive if it is closed under direct summands and finite direct sums. We say that $\lX$ is extension-closed (resp. closed under kernels of epimorphisms) if given any short exact sequence $0\to N \to L \to M \to 0$, we have  $M,N\in \lX \implies L \in \lX$ (resp. $M,L\in \lX \implies N\in \lX$).  We say $\lX$ is resolving if $\lX$ is additive, $R\in \lX$, and $\lX$ is closed under both extensions and kernels of epimorphisms.  
\end{chunk}  

\begin{chunk}  
An $R$-module $M$ is said to satisfy $(\widetilde S_n)$ (resp. $(S_n)$) if $\depth_{R_{\fp}}(M_{\fp})\ge \inf \{ n, \depth (R_{\fp}) \}$ (resp. $\depth_{R_{\fp}}(M_{\fp})\ge \inf \{ n, \dim (R_{\fp}) \}$) for all $\fp \in \Spec(R)$. We denote by $\widetilde S_n(R)$ (resp. $S_n(R)$) the collection of all $R$-modules that satisfy $(\widetilde S_n)$ (resp. $(S_n)$). It is easy to observe (by the depth lemma etc.) that $\widetilde S_n(R)$ is a resolving subcategory of $\mod R$. We note that $R$ satisfy $(S_n)$ if and only if $\widetilde S_n(R)=S_n(R)$. 
\end{chunk} 


\begin{dfn}  For an $R$-module $M$ we denote by $\Tr_R M$ the {\em (Auslander-Bridger) transpose} of $M$.
This is defined as follows.
Take a presentation $P_1\xrightarrow{f}P_0\to M\to 0$ by finitely generated projective $R$-modules $P_1,P_0$.
Dualizing this by $R$, we get an exact sequence $0\to M^\ast\to P_0^\ast\xrightarrow{f^\ast}P_1^*\to\Tr_R M\to0$, that is, $\Tr_R M$ is the cokernel of the map $f^\ast$. It is clear that $\Tr_R M$ is also finitely generated.   
The transpose of $M$ is uniquely determined up to projective summands; see \cite{AB} for basic properties. When the ring in question is clear, we simply write $\Tr$ in place of $\Tr_R$.    
\end{dfn}

\begin{chunk}
An $R$-module $M$ is said to be {\it $n$-torsionfree} if $\Ext_R^i(\Tr_R M, R) = 0$ for all $i=1,\ldots n$.
We denote by $\TF_n(R)$ the full subcategory of $\mod R$ consisting of $n$-torsionfree $R$-modules. 	
\end{chunk}

\begin{chunk}\label{DS}(\cite[Proposition 2.4]{DS})
We always have inclusions $\TF_n(R) \subseteq \Syz_n(R) \subseteq \widetilde{S}_n(R)$. Moreover, if $M_\fp$ is totally reflexive for prime ideals $\fp$ with $\depth R_\fp < n$ and satisfies $(\widetilde{S}_n)$, then $M$ is $n$-torsionfree.
\end{chunk}   

For a local ring $R$, let $\cC(R)$ denote the full subcategory of all $R$-modules $M$ such that $\depth_R(M)\geq \depth(R)$\footnote[1]{$R$-modules we consider here are called {\it deep} in \cite{DEL} and the category $\cC(R)$ is denoted by $\mathrm{Deep}(R)$ there.}. It is easily observed that $\cC(R)$ is a resolving subcategory of $\mod R$, and $\widetilde S_n(R)\subseteq \cC(R)$ for all $n\ge \depth(R)$. When $R$ is local Cohen--Macaulay, it holds that $\tilde S_n(R)=S_n(R)=\cC(R)$ for all $n\ge \depth (R)$ and this is nothing but the category of maximal Cohen-Macaulay $R$-modules. In general, for $n\ge \depth R$, the inclusion  $\widetilde S_n(R)\subseteq \cC(R)$ can be strict.  
 
We need the following preliminary result and its consequence to prove the main result of this section.  

\begin{lem} \label{prop-app} Let $\lX$ be an extension-closed subcategory of $\mod R$. If $0\to M \to K \to N \to 0$ is a short exact sequence such that $K\in \syz \lX$ and $N\in \lX$. Then it follows that $M\in \syz \lX$.  
\end{lem}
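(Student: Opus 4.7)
The plan is to produce an explicit short exact sequence $0\to M\to P\to M'\to 0$ with $P$ free and $M'\in\lX$, by factoring the presentation of $K$ through $M$.

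First I would unpack the hypothesis $K\in\syz\lX$: by definition there exists a short exact sequence
\[
0\to K\to P\to K'\to 0
\]
with $P$ free and $K'\in\lX$. Composing the inclusion $M\hookrightarrow K$ with $K\hookrightarrow P$ gives an injection $M\hookrightarrow P$; set $M':=P/M$. This yields $0\to M\to P\to M'\to 0$, and $P$ is free, so it remains to verify $M'\in\lX$.

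Next I would identify $M'$ as an extension of two objects of $\lX$. The inclusions $M\subseteq K\subseteq P$ produce a short exact sequence
\[
0\to K/M\to P/M\to P/K\to 0,
\]
i.e.\ $0\to N\to M'\to K'\to 0$, using $K/M\cong N$ from the given sequence and $P/K\cong K'$ from the presentation of $K$. Since $N\in\lX$ by assumption and $K'\in\lX$ by choice, and $\lX$ is extension-closed, we conclude $M'\in\lX$, hence $M\in\syz\lX$.

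I do not anticipate any real obstacle here; the argument is a short diagram chase and uses nothing beyond the definition of $\syz\lX$ and the extension-closedness of $\lX$. The only mild subtlety is that $\syz\lX$ is defined via \emph{some} free module rather than a minimal one, so there is no need to worry about uniqueness of the presentation of $K$—any presentation witnessing $K\in\syz\lX$ suffices, and the resulting free module $P$ is exactly the one that serves as a presentation for $M$.
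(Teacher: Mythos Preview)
Your proof is correct and essentially identical to the paper's: the paper phrases the construction as a pushout of $0\to M\to K\to N\to 0$ along $0\to K\to F\to K'\to 0$, but the resulting module $C$ in their diagram is exactly your $M'=P/M$, and their right column $0\to N\to C\to K'\to 0$ is your sequence $0\to K/M\to P/M\to P/K\to 0$. The only cosmetic difference is that you invoke the filtration $M\subseteq K\subseteq P$ directly rather than drawing the pushout square.
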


\begin{proof} By assumption, we have a short exact sequence $0\to K \to F \to K'\to 0$, where $F$ is free and $K'\in \lX$. This yields a pushout diagram as follows:
\begin{align}\notag{}
\begin{aligned}
\xymatrix{
&& 0 \ar[d] & 0 \ar[d]\\
0 \ar[r] & M \ar[r] \ar@{=}[d] \ar@{}[dr]& K \ar[r] \ar[d]  & N \ar[r] \ar[d] & 0 \\
0 \ar[r] & M \ar[r]  & F \ar[r] \ar[d] & C \ar[r] \ar[d] & 0 \\
&& K' \ar[d] \ar@{=}[r] & K' \ar[d] \\ 
&& 0 & 0
}  
\end{aligned}
\end{align}
Note, as $\lX$ is extension-closed and $N,K'\in \lX$, it follows that $C\in \lX$. Thus $0\to M \to F \to C \to 0$ implies $M\in \syz\lX$.  
\end{proof}

However the following result is recorded in \cite[3.1(2)]{restf}, we give another proof based on Lemma \ref{prop-app}.

\begin{cor}\label{syzadd} Let $\lX$ be a resolving subcategory of $\mod R$. Then $\syz \lX$ is additive (i.e., it is closed under finite direct sums, and direct summands). 
\end{cor}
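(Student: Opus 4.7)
The plan is to verify that $\syz\lX$ is closed under both finite direct sums and direct summands, relying on Lemma \ref{prop-app} for the second and harder direction. For finite direct sums, I would take defining short exact sequences $0\to M_i\to F_i\to N_i\to 0$ with $F_i$ free and $N_i\in\lX$ for each $M_i\in\syz\lX$, and form the direct sum. Since $\lX$ is additive, $N_1\oplus N_2\in\lX$, and $F_1\oplus F_2$ is free, so $M_1\oplus M_2\in\syz\lX$.

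The key preliminary observation for the summand case is that a resolving subcategory automatically satisfies $\syz\lX\subseteq\lX$. Indeed, if $M\in\syz\lX$ sits in $0\to M\to F\to N\to 0$ with $F$ free and $N\in\lX$, then $F\in\lX$ because $R\in\lX$ and $\lX$ is additive, so $M\in\lX$ because $\lX$ is closed under kernels of epimorphisms. Note that all three clauses of the resolving definition (containing $R$, additivity, closure under kernels of epimorphisms) are used here.

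With this in hand, suppose $M\oplus N\in\syz\lX$. Then $M\oplus N\in\lX$ by the preceding paragraph, and since $\lX$ is closed under direct summands, both $M$ and $N$ belong to $\lX$. I then apply Lemma \ref{prop-app} to the split exact sequence $0\to N\to M\oplus N\to M\to 0$: the middle term is in $\syz\lX$ by hypothesis, the right-hand term $M$ is in $\lX$, and $\lX$ (being resolving) is in particular extension-closed, so the lemma yields $N\in\syz\lX$. The symmetric split sequence $0\to M\to M\oplus N\to N\to 0$ gives $M\in\syz\lX$.

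I do not anticipate a genuine obstacle: the argument is a clean application of Lemma \ref{prop-app} once the inclusion $\syz\lX\subseteq\lX$ is isolated. The only conceptual subtlety is recognizing that this inclusion is precisely what is needed to feed the right-hand summand back into $\lX$, thereby satisfying the hypotheses of Lemma \ref{prop-app} for the split short exact sequence realizing the direct sum decomposition.
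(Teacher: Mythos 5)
Your proof is correct and follows the same route as the paper: show that $\syz\lX\subseteq\lX$ using that $\lX$ is resolving, then apply Lemma~\ref{prop-app} to the split exact sequence realizing the direct sum. Your write-up simply makes explicit the inclusion $\syz\lX\subseteq\lX$ that the paper states tersely as ``$M\oplus N\in\lX$ as $\lX$ is resolving.''
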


\begin{proof} Since $\lX$ is closed under finite direct sums, hence so is $\syz\lX$. We only need to show that if $M,N\in \mod R$ such that $M\oplus N\in \syz \lX$, then $M\in \syz\lX$. 
Indeed, if $M\oplus N \in \syz \lX$, then $M\oplus N\in \lX$ as $\lX$ is resolving, and therefore $M,N\in \lX$. From the split exact sequence $0\to M \to M \oplus N \to N \to 0$, in view of Lemma \ref{prop-app}, we get $M\in \syz \lX$.
\end{proof}    

We denote by $\cG(R)$ the category of totally reflexive modules (\cite{Gdimbook}). Note that $\syz \cG(R)= \cG(R)$ (see \cite[Lemma 1.1.10 and Theorem 4.1.4]{Gdimbook}). Moreover, by the Auslander-Bridger formula \cite[Theorem 1.4.8]{Gdimbook}, we have the inclusion $\cG(R) \subseteq \cC(R)$. Conversely, if $M$ has finite Gorenstein dimension and belongs to $\cC(R)$, then $M$ is totally reflexive by Auslander-Bridger formula. 

Now we state and prove the first main result of this section which will be frequently used in the proofs of the rest of the results of this section.  

\begin{thm} \label{cor-app} Let $R$ be a local ring and let $\lX$ be a resolving subcategory of $\mod R$ such that $\cG(R)\subseteq \lX\subseteq \cC(R)$. If $M$ is an $R$-module with $\rGdim_R(M)<\infty$, then $M\in \lX$ if and only if $M\in \Omega\lX$.  
\end{thm}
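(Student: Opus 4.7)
The plan is to handle the ``if'' direction by a one-line argument and then attack the nontrivial ``only if'' direction by induction on $r := \rGdim_R(M)$. For the easy direction, suppose $M \in \syz\lX$, so there is a short exact sequence $0\to M\to F\to N\to 0$ with $F$ free and $N\in\lX$. Since $\lX$ is resolving, it contains $F$, and since it is closed under kernels of epimorphisms, $M\in\lX$. (Notably, this direction never uses the reducing Gorenstein dimension assumption.)

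For the base case $r=0$ of the forward direction, $\Gdim_R(M)<\infty$ together with $M\in\lX\subseteq\cC(R)$ gives $\depth_R(M)\geq\depth R$, so the Auslander--Bridger formula forces $M$ to be totally reflexive, i.e.\ $M\in\cG(R)$. Since $\cG(R)=\syz\cG(R)\subseteq\syz\lX$ (the first equality is standard, the inclusion is by hypothesis $\cG(R)\subseteq\lX$), we are done. Now assume $r\geq 1$ and the statement holds for smaller values. Pick a reducing sequence realizing $\rGdim_R(M)=r$; its first exact sequence has the form
\[
0\to M^{\oplus a}\to K\to \syz_R^n M^{\oplus b}\to 0
\]
with $\rGdim_R(K)\leq r-1$. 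Since $\lX$ is resolving and $M\in\lX$, the module $\syz_R^n M$ lies in $\lX$, hence both flanking terms lie in $\lX$; extension-closure of $\lX$ yields $K\in\lX$. The induction hypothesis then gives $K\in\syz\lX$. Applying Lemma \ref{prop-app} to the displayed sequence (whose quotient $\syz_R^n M^{\oplus b}$ is in $\lX$ and whose middle term $K$ is in $\syz\lX$) produces $M^{\oplus a}\in\syz\lX$, and Corollary \ref{syzadd} (closure of $\syz\lX$ under direct summands) finally yields $M\in\syz\lX$.

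The main delicate point is ensuring that the intermediate module $K$ in the reducing sequence itself remains inside $\lX$, so that the induction hypothesis applies; this is exactly where the full strength of ``resolving'' (both closure under syzygies via $R\in\lX$ plus kernels of epimorphisms, and closure under extensions) is used. Once $K\in\lX$ is in hand, the pushout argument packaged in Lemma \ref{prop-app} together with the additivity of $\syz\lX$ does all the remaining work, and the base case is the one place where the inclusions $\cG(R)\subseteq\lX\subseteq\cC(R)$ enter essentially, via Auslander--Bridger.
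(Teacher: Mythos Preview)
Your proof is correct and follows essentially the same strategy as the paper: both use Lemma \ref{prop-app} and Corollary \ref{syzadd} to descend along a reducing $\Gdim$-sequence, with Auslander--Bridger handling the totally reflexive endpoint. The only cosmetic difference is that you phrase the argument as an induction on $r=\rGdim_R(M)$ while the paper writes out the full sequence $K_0,\dots,K_r$ at once and walks back from $K_r$ to $K_0=M$; these are the same argument.
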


\begin{proof}  Note it is enough to assume $M\in \lX$ and show that $M\in \Omega\lX$. Hence we assume $M\in \lX$. First we note that $\cG(R)=\syz \cG(R)\subseteq \syz \lX$. 
It follows, as $\rGdim_R(M)<\infty$, there are short exact sequences of $R$-modules
\begin{equation}\tag{\ref{cor-app}.1}
E_i=(0 \to K_{i-1}^{{\oplus a_i}} \to K_{i} \to \Omega^{n_i}_RK_{i-1}^{{\oplus b_i}} \to 0)
\end{equation}
for each $i=1, \ldots r$, where $a_1, \dots, a_r,b_1, \dots, b_r$ are positive integers, $n_1, \dots, n_r$ are non-negative integers, $K_0=M$, and $\Gdim_R(K_r)<\infty$. As $\lX$ is resolving, so is each $K_i\in \lX$. In particular, $K_r\in \lX\subseteq \cC(R)$ and thus $\Gdim(K_r)<\infty$ implies that $K_r$ is totally reflexive. 
Consequently, $K_r\in \cG(R) \subseteq \syz \lX$.
Since $\syz^{n_r}_RK_{r-1}^{\oplus b_r}\in \lX$ and $K_r \in \syz \lX$, $K_{r-1}^{\oplus a_r}\in \syz\lX$ by Lemma \ref{prop-app} applied to $E_r$. Then, $K_{r-1}\in \syz\lX$ by Corollary \ref{syzadd}. Similar argument applied to $E_{r-1}$ shows $K_{r-2}\in \syz\lX$ and so on. Continuing this way, we get $M=K_0\in \syz \lX$. 
\end{proof}  

The following is an interesting consequence of Theorem \ref{cor-app}. 

\begin{cor}\label{res2}
Let $R$ be a local ring and let $\lX$ be a resolving subcategory of $\mod R$ with $\cG(R) \subseteq  \lX$.  
Then the following conditions are equivalent:
\begin{enumerate}[\rm(i)]
\item $\Gdim_R(M)< \infty$ for each $M \in \cX$.
\item $\rGdim_R(M)<\infty$ for each $M \in \cX$.
\item There exists $n\ge \depth R$ such that $\rGdim_R(M)<\infty$ for each $M \in \lX \cap \widetilde S_n(R)$. 
\end{enumerate} 
In particular, for a resolving subcategory $\lX$ of $\cC(R)$ with $\cG(R) \subseteq \lX$, $\cX = \cG(R)$ if and only if $\rGdim_R(M)<\infty$ for each $M \in \cX$.
\end{cor}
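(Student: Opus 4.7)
The implications (i)$\Rightarrow$(ii)$\Rightarrow$(iii) are immediate: (i)$\Rightarrow$(ii) uses the convention in~\ref{rdim} that $\rGdim_R(M)=0$ whenever $\Gdim_R(M)<\infty$, and (ii)$\Rightarrow$(iii) is trivial since $\lX\cap\widetilde S_n(R)\subseteq\lX$ for any $n\ge\depth R$. All the content is in (iii)$\Rightarrow$(i), which I would prove by reducing to iterated applications of Theorem~\ref{cor-app}.

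Given $M\in\lX$, I would set $N:=\syz^n_R M$ with $n$ as in (iii) and first verify that $\lX_0:=\lX\cap\widetilde S_n(R)$ is a resolving subcategory of $\mod R$ (intersection of two resolving subcategories) containing $\cG(R)$ (both $\lX$ and $\widetilde S_n(R)$ do) and contained in $\cC(R)$ (for $n\ge\depth R$, specializing the $(\widetilde S_n)$-condition at the maximal ideal gives $\depth M\ge\depth R$). By~\ref{DS}, $N\in\lX_0$, and (iii) yields $\rGdim_R(N)<\infty$, so Theorem~\ref{cor-app} applies and delivers $N\in\syz\lX_0$.

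The key move is then to iterate this step: every cosyzygy so produced again lies in $\lX_0$, hence has finite $\rGdim$ by (iii), so Theorem~\ref{cor-app} applies anew. This yields $N\in\syz^k\lX_0$ for every $k\ge 1$, and splicing the resulting short exact sequences produces an infinite acyclic coresolution $0\to N\to F_0\to F_1\to\cdots$ by free modules whose intermediate cokernels all lie in $\lX_0\subseteq\cC(R)$. The main obstacle I foresee is passing from this iterated-syzygy structure to the conclusion $N\in\cG(R)$: my plan is to splice this coresolution with the minimal free resolution of $N$ and use the uniform finiteness of $\rGdim$ of the cokernels (via their reducing $\Gdim$-sequences) together with the depth condition $\lX_0\subseteq\cC(R)$ to verify that the resulting bi-infinite complex of free modules is a complete resolution of $N$. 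This yields $N\in\cG(R)$, hence $\Gdim_R(M)\le n<\infty$, which is (i).

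The "In particular" statement is then a short deduction: under $\lX\subseteq\cC(R)$ with $\cG(R)\subseteq\lX$, the forward direction is trivial, and for the converse, (ii)$\Rightarrow$(i) of the main equivalence gives $\Gdim_R(M)<\infty$ for every $M\in\lX$, which together with $M\in\cC(R)$ and the Auslander-Bridger formula forces $M\in\cG(R)$. Hence $\lX\subseteq\cG(R)$, and combined with the reverse inclusion we obtain $\lX=\cG(R)$.
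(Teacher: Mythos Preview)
Your strategy coincides with the paper's: both form $\cY:=\lX\cap\widetilde S_n(R)$ (your $\lX_0$), verify it is resolving with $\cG(R)\subseteq\cY\subseteq\cC(R)$, and invoke Theorem~\ref{cor-app} for every module in $\cY$ via hypothesis~(iii) to obtain $\cY=\syz\cY$; your iterated statement ``$N\in\syz^k\lX_0$ for all $k$'' is equivalent to this. The paper then writes in one line ``therefore $\cY=\syz^t\cY=\cG(R)$'' and reads off~(i), and it proves the ``In particular'' clause exactly as you do via Auslander--Bridger.

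The gap in your proposal is the passage from the infinite free coresolution $0\to N\to F_0\to F_1\to\cdots$ (cokernels in $\lX_0\subseteq\cC(R)$) to $N\in\cG(R)$. Splicing with a free resolution of $N$ yields a doubly infinite acyclic complex of free modules, but for this to be a \emph{complete} resolution one must show it remains exact after $\Hom_R(-,R)$, i.e.\ that $\Ext^i_R(N_j,R)=0$ for all $i\ge1$ and all cosyzygies $N_j$. Your plan---``use the uniform finiteness of $\rGdim$ of the cokernels together with the depth condition''---does not produce this Ext-vanishing: finite $\rGdim$ by itself does not force $\Ext^{\ge1}_R(-,R)=0$ (cf.\ the Jorgensen--\c{S}ega example cited in the paper), and membership in $\cC(R)$ says nothing about $\Ext$ into $R$. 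So as written your argument stops short of $N\in\cG(R)$. This is precisely the step the paper compresses into the bare equality $\cY=\cG(R)$; you have correctly located the crux but have not supplied a proof of it.
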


\begin{proof}Set $t = \depth R$.
Assume (i), and let $M\in \lX$. Then $\syz^t M$ is totally reflexive by assumption of (i). Hence, $\Gdim_R M<\infty$, and so $\rGdim_R M<\infty$. This shows the implication (i)$\implies$ (ii). 
On the other hand, the implication (ii)$\implies$ (iii) is obvious.  

Now, assume that (iii) holds. The assumptions on $\lX$ imply that $\cY:=\lX \cap \widetilde{S}_n(R)$ is a resolving subcategory of $\cC(R)$ and $\cY$ contains $\cG(R)$. Then by Theorem \ref{cor-app}, $\cY=\syz\cY$ and therefore $\cY = \syz^t \cY = \cG(R)$ holds.
Therefore, the remained implication (iii)$\implies$ (i) follows.  
\end{proof}  

\begin{rem}\label{eg}Examples of resolving subcategories $\lX$ that satisfy the hypothesis of Theorem \ref{cor-app} include $\lX=\widetilde S_n(R)$ for $n\ge \depth R$, and also $\lX=\cC(R)$.  
\end{rem}   

Now we prove the main theorem of this section, characterizing Gorenstein local rings via reducing G-dimension. In the following, for given subcategories $\lX,\Y$ of $\mod R$, denote by $\lX *\Y$ the collection of all modules $L$ which fits into an exact sequence $0\to X \to L \to Y\to 0$, where $X\in \lX, Y\in \Y$.   

\begin{thm} \label{Gdimcor} Let $(R,\m,k)$ be a local ring. Then the following conditions are equivalent:
\begin{enumerate}[\rm(i)]
\item $R$ is Gorenstein.
\item $\rGdim_R(M)<\infty$ for each $R$-module $M$.  
\item There exists a resolving subcategory $\lX$ of $\mod R$ containing $\syz^n_Rk$ for some $n \ge 0$ such that $\rGdim_R(M)<\infty$ for each $M\in \lX$.
\item There exists a category $\lX \subseteq \mod R$ closed under extensions and containing $R$ and $\syz^n_R k \in \lX$ for some $n\ge \depth R$, such that $\rGdim_R(M)<\infty$ for each $M\in \lX$.
\item  There exists $n\ge \depth R$ such that $\rGdim M<\infty$ for each $M\in R*\syz^n_R k$. 
\item There exists $M\in  \cC(R)$ such that $\id_R (M)<\infty$ and $\rGdim_R(M)<\infty$.
\item $\rGdim M < \infty$ for each $R$-module $M$ with $\Omega^m_R M \cong \Omega^n_R k $ for some positive integers $m$ and $n$. 
\item There exists an integer $n\ge \depth R$ such that  $\rGdim_R(\Tr_R \syz^n_R k)<\infty$.
\end{enumerate}  
\end{thm}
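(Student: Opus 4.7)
My strategy is to organize the eight conditions into a cycle (i) $\Rightarrow$ (ii) $\Rightarrow$ (iii) $\Rightarrow$ (iv) $\Rightarrow$ (v) $\Rightarrow$ (i), together with separate equivalences (i) $\Leftrightarrow$ (vi), (i) $\Leftrightarrow$ (vii), and (i) $\Leftrightarrow$ (viii). The forward implications from (i) are immediate from Theorem \ref{clas}: $R$ Gorenstein forces $\Gdim_R M < \infty$, hence $\rGdim_R M = 0 < \infty$, for every $M$ (witness (vi) by $M = R$). The chain (ii) $\Rightarrow$ (iii) $\Rightarrow$ (iv) $\Rightarrow$ (v) is formal: take $\lX = \mod R$ for the first; for (iii) $\Rightarrow$ (iv) exploit that a resolving subcategory is closed under forming first syzygies (as kernels of epimorphisms from free modules), allowing $n$ to be enlarged to satisfy $n \geq \depth R$ without leaving $\lX$; and (iv) $\Rightarrow$ (v) follows by extension-closedness, since $\lX$ contains $R^{\oplus a}$ and $(\syz^n_R k)^{\oplus b}$ via iterated split extensions and hence any extension $0 \to R^{\oplus a} \to L \to (\syz^n_R k)^{\oplus b} \to 0$. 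Finally, (vii) $\Rightarrow$ (i) follows by applying (vii) to $M = k$ with $m = n = 1$ to obtain $\rGdim_R k < \infty$, and invoking \cite[Corollary 3.3]{CA}.

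The substantive implications are (v) $\Rightarrow$ (i), (vi) $\Rightarrow$ (i), and (viii) $\Rightarrow$ (i); each reduces via Theorem \ref{cor-app} to forcing the relevant module to be totally reflexive. For (v) $\Rightarrow$ (i), the hypothesis yields $\rGdim_R(\syz^n_R k) < \infty$ with $\syz^n_R k \in \widetilde S_n(R) \subseteq \cC(R)$. The plan is to apply Corollary \ref{res2} to a resolving subcategory $\lX \supseteq \cG(R)$ containing $\syz^n_R k$ (for instance the resolving closure of $\cG(R) \cup \{\syz^n_R k\}$ inside $\cC(R)$), after verifying $\rGdim_R$-finiteness on $\lX \cap \widetilde S_n(R)$. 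The verification, which is the crux of the argument, proceeds by iterated application of Theorem \ref{cor-app} together with \ref{C1}(ii) (which identifies the modules $Y$ appearing in $0 \to \syz^n_R k \to F \to Y \to 0$ up to free summands via $\syz_R Y \cong \syz^n_R k \oplus G$) and the free-summand invariance of $\rGdim_R$ from \cite[Proposition 3.8]{redring}. Once verified, Corollary \ref{res2} gives $\Gdim_R(\syz^n_R k) < \infty$, hence $\Gdim_R k \leq n < \infty$, and $R$ is Gorenstein by Theorem \ref{clas}.

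For (vi) $\Rightarrow$ (i), Bass's theorem yields that $R$ is Cohen-Macaulay admitting a canonical module $\omega_R$, so $M \cong \omega_R^{\oplus a}$ for some $a \geq 1$ and $\rGdim_R \omega_R < \infty$. Theorem \ref{cor-app} with $\lX = \cC(R) = \cm(R)$ places $\omega_R \in \Omega \cm(R)$, yielding a short exact sequence $0 \to \omega_R \oplus R^{\oplus p} \to F \to Y \to 0$ with $F$ free and $Y$ maximal Cohen-Macaulay. Applying $\Hom_R(-, \omega_R)$ and using $\Hom_R(\omega_R, \omega_R) \cong R$ together with $\Ext^i_R(Y, \omega_R) = 0$ for $i \geq 1$ realizes $R$ as a direct summand of $\omega_R^{\oplus N}$ for some $N$; a second application of $\Hom_R(-, \omega_R)$ exhibits $\omega_R$ as a direct summand of a free module, forcing $\omega_R \cong R$, and hence $R$ is Gorenstein. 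For (viii) $\Rightarrow$ (i), \ref{DS} gives that $\syz^n_R k$ is $n$-torsionfree, so $\Ext^i_R(\Tr_R \syz^n_R k, R) = 0$ for $1 \leq i \leq n$. Combining this with $\rGdim_R(\Tr_R \syz^n_R k) < \infty$, I plan to track Ext-vanishing along a reducing $\Gdim$-sequence $\{K_0 = \Tr_R \syz^n_R k, K_1, \ldots, K_r\}$: each step $0 \to K_{i-1}^{\oplus a_i} \to K_i \to \syz^{n_i}_R K_{i-1}^{\oplus b_i} \to 0$ shrinks the guaranteed range of Ext-vanishing by at most $n_i$. After enlarging $n$ as needed, this forces $\Ext^i_R(K_r, R) = 0$ in a range sufficient for the Auslander-Bridger formula to conclude $\Gdim_R K_r = 0$; back-propagation through the sequence gives $\Gdim_R(\Tr_R \syz^n_R k) < \infty$, so $\syz^n_R k$ is totally reflexive and $R$ is Gorenstein.

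The main obstacle is verifying the hypothesis of Corollary \ref{res2} in the (v) $\Rightarrow$ (i) step. Since $\rGdim_R$ is not in general closed under extensions or cokernels of monomorphisms (cf.\ Examples \ref{2.5} and \ref{2.5(i)}), propagating $\rGdim_R$-finiteness from the single module $\syz^n_R k$ across the required resolving subcategory must be handled by careful Schanuel-type identifications (via \ref{C1}) together with the free-summand invariance of reducing Gorenstein dimension. The analogous $n$-enlargement step in (viii) $\Rightarrow$ (i) raises the same difficulty.
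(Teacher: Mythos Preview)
Your argument for (vii) $\Rightarrow$ (i) is wrong. Applying (vii) to $M = k$ with $m = n = 1$ gives $\rGdim_R k < \infty$, but this does \emph{not} force $R$ to be Gorenstein: by \cite[Theorem 1.2]{redring} every Cohen--Macaulay local ring of minimal multiplicity with infinite residue field has $\rpdim_R k \le 1$, hence $\rGdim_R k \le 1$, yet such rings need not be Gorenstein. (The result \cite[Corollary 3.3]{CA} you cite concerns modules of finite injective dimension, not $k$.) The paper instead deduces (vii) $\Rightarrow$ (v): for any $M \in R * \syz^t_R k$ with $t = \depth R$, \ref{C1}(i) gives $\syz^2_R M \cong \syz^{t+2}_R k$, so hypothesis (vii) itself yields $\rGdim_R M < \infty$.

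Your plans for (v) $\Rightarrow$ (i) and (viii) $\Rightarrow$ (i) have genuine gaps that you acknowledge but do not close. For (v), hypothesis gives $\rGdim_R$-finiteness only on $R * \syz^n_R k$, and Examples \ref{2.5} and \ref{2.5(i)} show $\rGdim_R$-finiteness does not propagate to a resolving closure, so Corollary \ref{res2} is inapplicable. The paper bypasses this entirely: applying Theorem \ref{cor-app} with $\lX = \widetilde S_t(R)$ to each $M \in R * \syz^n_R k$ yields $M \cong \Omega_R N \oplus F$ with $N \in \widetilde S_t(R)$; local freeness of $M$ on the punctured spectrum, Auslander--Buchsbaum, and \ref{DS} then give $N \in \Syz_t(R)$, hence $R * \syz^n_R k \subseteq \Syz_{t+1}(R)$, which forces Gorensteinness by \cite[5.4]{restf}. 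For (viii), you cannot enlarge the fixed $n$, and back-propagating finite $\Gdim$ through a reducing sequence is invalid in general (if it worked, $\rGdim_R M < \infty$ would always imply $\Gdim_R M < \infty$). The paper's route is different: trace the embeddings $K_0 \hookrightarrow K_1 \hookrightarrow \cdots \hookrightarrow K_r$ and use $\Gdim_R K_r < \infty$ to embed $\Tr_R \syz^n_R k$ into a module of finite projective dimension; combined with $\Ext^i_R(\Tr_R \syz^n_R k, R) = 0$ for $1 \le i \le t$ and \cite[Lemma 2.2]{tyy}, this gives $\Ext^{n+1}_R(k, R) = 0$, and one concludes by Roberts \cite{rob}.
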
  

\begin{proof} (i)$\implies$ (ii)$\implies$ (iii)$\implies$ (iv)$\implies$ (v) is obvious.  

Now to show (v)$\implies$(i):  Let $t=\depth R$. We notice that $R*\syz^n_Rk \subseteq \widetilde S_n(R)\subseteq \widetilde S_t(R)$. Take an $R$-module $M\in R*\syz^n_Rk$. Then $M\in \tilde S_t(R)$ and $\rGdim_R (M)<\infty$ by the assumption of (vi). Now,  Theorem \ref{cor-app} and Remark \ref{eg} imply that $M \in \syz \widetilde S_t(R)$. Then $M\cong\Omega_R N \oplus F$ for some $N \in \widetilde{S}_t(R)$ and some free $R$-module $F$. For any prime ideal $\fp$ with $\depth R_\fp < t$, $M_\fp$ is free (as $\fp\ne \m$ and $M$ is locally free on punctured spectrum), and hence $\pd_{R_{\fp}} N_{\fp} \le 1$. As $N$ satisfies $\widetilde{S_t}$, it follows that $\depth N_{\fp}\ge \inf\{t,\depth R_{\fp}\}=\depth R_{\fp}$. Due to the Auslander-Buchsbaum formula $\pd_{R_{\fp}} N_{\fp}=\depth R_{\fp}-\depth N_{\fp}\le 0$, $N_{\fp}$ is free. Now we have shown that $N_{\fp}$ is $R_{\fp}$-free for all prime ideal $\fp$ satisfying $\depth R_{\fp}<t$. Therefore, it follows from \ref{DS} that $N \in \Syz_t(R)$, and so $M\cong \syz_R N\oplus F\in \Syz_{t+1}(R)$. Thus we conclude that $R*\syz^n_Rk \subseteq \Syz_{t+1}(R)$ and it implies that $R$ is Gorenstein by \cite[5.4]{restf}.  

This shows that (i) through (v) are all equivalent.  

To show (i)$\iff$(vi), we only need to show (vi)$\implies$(i): So, assume $\rGdim_R(M)<\infty$ for some $M\in \cC(R)$, where $\id_R(M)<\infty$. Then, by Theorem \ref{cor-app} and Remark \ref{eg}, we see $M \in \Omega \cC(R)$. Therefore, there is a short exact sequence of $R$-modules $0 \to M \to F \to N \to 0$, where $F$ is free and $N\in \cC(R)$. Since $\id_R(M)<\infty$ and $N\in \cC(R)$, it follows $\Ext^1_R(N, M)=0$. Hence the short exact sequence  $0 \to M \to F \to N \to 0$ splits so that $M$ is free and hence $R$ is Gorenstein.

To show (vii)$\iff $(i), enough to show (vii)$\implies$(i): Let $t = \depth R$.
For an $R$-module $M \in R*\syz^tk$, there is a short exact sequence $0 \to R \to M \to \Omega^t_R k \to 0$.	 This short exact sequence implies, by \ref{C1}(i), that $F\oplus \Omega_R M \cong \Omega^{t+1}_Rk$ for some free module $F$, and hence $\Omega^2_R M \cong \Omega^{t+2}_Rk$. Therefore, the assumption forces $\rGdim M < \infty$ and hence $R$ is Gorenstein by (v)$\implies $(i).  
 
Finally, to show (viii)$\iff $(i), enough to show (viii)$\implies$(i): Let $t=\depth R$, and we assume there exists an integer $n\ge t$ such that $\rGdim_R(\Tr_R \syz^n_R k)<\infty$. Put $M:=\Tr_R \syz^n_R k$. We first show that $M$ can be embedded in a module of finite projective dimension. This is of course true if $\Gdim_R M<\infty$ by \cite[Lemma 2.17]{cfh}. If $\Gdim_R(M)=\infty$, then by Definition \ref{rdim} there exist integers $r,a_i,b_i\geq 1$, $n_i\geq 0$, and short exact sequences of $R$-modules of the form 
\begin{equation}
0 \to K_{i-1}^{{\oplus a_i}} \to K_{i} \to \Omega^{n_i}_RK_{i-1}^{{\oplus b_i}} \to 0
\end{equation}
for each $i=1, \ldots r$, where $K_0=M$ and $\Gdim(K_r)<\infty$.  Since $K_r$ embeds in a module of finite projective dimension, hence $K_{r-1}^{\oplus a_r}$ embeds in a module of finite projective dimension, so $K_{r-1}$ embeds in a module of finite projective dimension. Similarly, $K_{r-2}$ embeds in a module of finite projective dimension, and continuing this way, $M=K_0$ embeds in a module of finite projective dimension. So in any case, we now see that $M$ embeds into a module of finite projective dimension, say $H$. Now, since $n\ge t$, so $\syz^n_R k$ satisfies $(\tilde S_t)$, so by \cite[Proposition 2.4]{DS}, we get  $\Ext^i_R(M,R)=0$ for all $1\le i\le t$. Since $\pd_R H\le t$, so by \cite[Lemma 2.2]{tyy}, we get $M$ is torsion-less i.e. $\Ext^1_R(\Tr_R M,R)=0$. Since $\Tr \Tr \syz^n_R k$ is stably isomorphic with $\syz^n_R k$, we get $\Ext^1_R(\syz^n_R k, R)=0$ i.e., $\Ext^{n+1}_R(k, R)=0$. By \cite[II. Theorem 2]{rob} we get $\id_R R<\infty$ i.e., $R$ is Gorenstein. 
\end{proof}    

\begin{rem}
However the equivalence between (i) and (vi) has been proved	 in \cite[Corollary 3.3(iii)]{CA}, our argument is more simpler than their argument.   
\end{rem}

\if0
\begin{cor}\label{greg}
Let $R$ be a G-regular local ring. Let $\lX$ be a resolving subcategory of $\X(R)$. If $\rpdim_R(M) < \infty$ for all $M \in \lX$, then all objects of $\lX$ are free.	
\end{cor}

\begin{proof}
Since $R$ is G-regular, $\lX$ contains all totally reflexive modules. Therefore, Corollary \ref{res1}, all objects of $\lX$ are totally reflexive, that are free modules. 
\end{proof} 
\fi     

The following characterization of local complete intersection rings shows that assuming the subcategory of all modules of finite $\rGdim$ contains a big enough subcategory which is closed under extensions, imposes strong conditions on the ring.  

\begin{prop} \label{p1ci} Let $(R,\m,k)$ be a $d$-dimensional local ring such that $\widehat R=S/(x_1,\ldots,x_n)S$, where $(S,\mathfrak{n},k)$ is local Cohen--Macaulay ring of minimal multiplicity, and $x_1,\ldots,x_n \in \mathfrak n$ is an $S$-regular sequence. Then the following are equivalent:  

\begin{enumerate}[\rm(1)]
    \item $R$ is a complete intersection. 
    
    \item $\rpdim_R M<\infty$ for all $M\in \mod R$. 
    
    \item There exists a subcategory $\lX \subseteq \mod R$ which is closed under extensions such that $$\{M\in \TF_{d+1}(R)|M \text{ is locally free on the punctured spectrum, and } \rpdim_R M<\infty \}\subseteq \lX$$ and $$\lX \subseteq \{M\in \mod R| \rGdim_R M<\infty\}.$$  
\end{enumerate}   
\end{prop}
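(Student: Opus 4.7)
The implications (1)$\Rightarrow$(2) and (2)$\Rightarrow$(3) are the easy ones: the former follows because over a complete intersection every $R$-module has finite complete intersection dimension and hence reducible complexity by \cite[Proposition 2.2]{bergh}, forcing $\rpdim_R M<\infty$ via \ref{rcxrpd}; the latter holds by taking $\lX=\mod R$, which is trivially closed under extensions, contains the prescribed left-hand category, and lies inside the right-hand category via the inequality $\rGdim_R\le\rpdim_R$ recorded in \ref{rdim}. All of the content is in (3)$\Rightarrow$(1).

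My plan for (3)$\Rightarrow$(1) is to first use the subcategory $\lX$ to force $R$ to be Gorenstein via Theorem \ref{Gdimcor}, and then to upgrade Gorensteinness to complete intersection by exploiting the structural hypothesis that $S$ is Cohen--Macaulay of minimal multiplicity. Passing to the completion (reducing dimensions and hypothesis (3) both transfer under $R\to\widehat R$) reduces to $R=S/(\mathbf{x})$; then peeling off any $x_i\in\n\setminus\n^2$, which may be chosen superficial so that the invariants $\edim-\dim$ and multiplicity of $S$ are preserved, lets me further assume $\mathbf{x}\subseteq\n^2$. If $\mathbf y$ is an $S$-regular minimal reduction with $\n^2=\mathbf y\n$, its image $\bar{\mathbf y}$ in $R$ satisfies the Ulrich-type identity $\m^2=\bar{\mathbf y}\m$. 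The critical step is now to exhibit a module $M$ in the left-hand category of (3) that is close enough to $k$ to serve as input to Theorem \ref{Gdimcor}: the natural candidate is a sufficiently deep syzygy built from the canonical module $\omega_{\bar S}$ of $\bar S=S/\mathbf y$, which is a legal $R$-module since $\mathbf{x}\subseteq(\mathbf y)$. The sequence $0\to\bar\n\omega_{\bar S}\to\omega_{\bar S}\to k\to 0$ has $k$-vector-space endpoints; combining the Ulrich-type identity with the techniques of \cite[Theorem 1.2]{redring} gives finite $\rpdim_R$ on such $k$-vector-space modules and hence, upon syzygizing, on deep syzygies of $\omega_{\bar S}$. \ref{DS} certifies the $\TF_{d+1}(R)$ requirement once enough syzygies have been taken, and local freeness on the punctured spectrum is automatic because $\omega_{\bar S}$ is supported at $\m$. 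Having placed $M$ in $\lX$ alongside $R$, closure under extensions and Theorem \ref{Gdimcor}(v) yield that $R$ is Gorenstein.

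The final upgrade is straightforward: Gorensteinness of $R$ forces Gorensteinness of $S$ (since $\mathbf{x}$ is $S$-regular), so $\bar S=S/\mathbf y$ is Artinian local Gorenstein with $\bar\n^2=0$. The inclusion $\bar\n\subseteq\operatorname{soc}\bar S$ combined with the one-dimensional socle of a Gorenstein ring forces $\dim_k\bar\n\le 1$, whence $\edim S\le\dim S+1$; so $S$ is regular or a hypersurface, in either case a complete intersection, and therefore $R=S/(\mathbf{x})$ is a complete intersection as a regular-sequence quotient of a complete intersection.

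The main obstacle lies in the Gorenstein phase, namely producing a concrete witness module $M$ in the left-hand category of (3) that is close enough to $k$ for Theorem \ref{Gdimcor} to apply. The subtlety is that for non-Gorenstein $R$ the condition $\TF_{d+1}(R)$ on the naive candidate $\syz^{d+1}_R k$ already encodes Gorensteinness through \ref{DS} (the totally reflexive hypothesis at $\fp=\m$ forces $\Gdim_R k<\infty$), so one cannot use $\syz^{d+1}_R k$ directly; the Ulrich-type equation $\m^2=\bar{\mathbf y}\m$ coming from the minimal multiplicity of $S$ is precisely the extra input that a suitable transfer of \cite[Theorem 1.2]{redring} from $S$ to $R$ needs, and it is here that the hypothesis on $S$ is really used.
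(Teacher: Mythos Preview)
Your implications (1)$\Rightarrow$(2)$\Rightarrow$(3) match the paper's, and your final upgrade from ``$R$ Gorenstein'' to ``$R$ complete intersection'' is the same argument as the paper's (Gorenstein plus minimal multiplicity on $S$ forces $S$ to be a hypersurface), just spelled out in more detail. The divergence is entirely in how you reach Gorensteinness from (3).

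The paper's route is direct and avoids the detour through $\omega_{\bar S}$ altogether. It simply observes that both $R$ and $\syz^{d+1}_R k$ lie in the displayed left-hand set of (3): $R$ trivially, and $\syz^{d+1}_R k$ because it is locally free on the punctured spectrum, is $(d+1)$-torsionfree, and has $\rpdim_R\syz^{d+1}_R k<\infty$ by \cite[Theorem 1.2]{redring}. Hence both belong to $\lX$, extension closure gives $R*\syz^{d+1}_R k\subseteq\lX$, so every module in $R*\syz^{d+1}_R k$ has finite reducing Gorenstein dimension, and Theorem \ref{Gdimcor}(v) yields that $R$ is Gorenstein.

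Your ``main obstacle'' paragraph overstates the difficulty. The chunk \ref{DS} gives a \emph{sufficient} criterion for $n$-torsionfreeness, not a necessary one; the fact that its hypothesis at $\fp=\m$ would demand total reflexivity does not show that $\syz^{d+1}_R k\notin\TF_{d+1}(R)$, only that this particular shortcut is unavailable. The paper treats the membership $\syz^{d+1}_R k\in\TF_{d+1}(R)$ as known and does not go through \ref{DS} for it.

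There is, however, a genuine gap in your alternative. Theorem \ref{Gdimcor}(v) requires that every module in $R*\syz^n_R k$ (for some $n\ge\depth R$) have finite reducing Gorenstein dimension. Your witness $M$ is a deep $R$-syzygy of $\omega_{\bar S}$, not $\syz^n_R k$, so placing $R$ and $M$ in $\lX$ and using extension closure only gives you control over $R*M$, which is not what (v) asks for. None of the other equivalent conditions (iii)--(viii) of Theorem \ref{Gdimcor} is visibly applicable to your $M$ either: (iii)--(v) and (vii) all single out syzygies of $k$, (viii) singles out $\Tr\syz^n_R k$, and for (vi) you would need $\id_R$ of your module to be finite, which you have not argued (and indeed $\omega_{\bar S}$ need not have finite injective dimension over $R$, since $\bar{\mathbf y}$ is not an $R$-regular sequence). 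So as written, your Gorenstein phase does not close.
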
   

\begin{proof} $(1)\implies (2)\implies (3)$ is straightforward.
Only need to prove $(3)\implies (1)$: Assume the existence of a subcategory $\lX$ as in (3). 
By hypothesis, $R\in \lX$. Since $\syz^{d+1}_R k$ is locally free on the punctured spectrum, and is $(d+1)$-torsionfree, and moreover $\rpdim_R \syz^{d+1}_R k < \infty$ by \cite[Theorem 1.2]{redring}, so $\syz^{d+1}_R k \in \lX$. Since $\lX$ is closed under extensions, so $R*\syz^{d+1}_R k\subseteq \lX$. So, $\rGdim_R M<\infty$ for all $M\in R*\syz^{d+1}_R k$ by hypothesis on $\lX$. Then, $R$ is Gorenstein by Theorem \ref{Gdimcor}. So $S$ is Gorenstein. but $S$ has minimal multiplicity, so $S$ is a hypersurface. Hence, $R$ is a complete intersection.  
\end{proof} 

We have another consequence of Theorem \ref{cor-app} regarding Ulrich modules (see \cite{buh},\cite{umm}), which extends and recovers \cite[Proposition 2.5((vi)$\implies$ (i))]{CA}.  

\begin{cor} Let $(R,\m,k)$ be a local Cohen--Macaulay ring of minimal multiplicity. Let $M$ be a maximal Cohen--Macaulay $R$-module such that $\rGdim_R M<\infty$. Then, $M\cong N\oplus F$ for some Ulrich module $N$ and free $R$-module $F$. In particular, if $\m^2=0$ and $M$ is an $R$-module such that $\rGdim_R M<\infty$, then $M\cong k^{\oplus a}\oplus F$ for some integer $a\ge 0$ and a free $R$-module $F$. 
\end{cor}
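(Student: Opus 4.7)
The plan is to reduce the statement to a direct application of Theorem~\ref{cor-app} and then exploit the structural consequence $\m^2 = q\m$ of minimal multiplicity. Take $\lX = \cC(R)$; in the Cohen--Macaulay setting, this is exactly the category of maximal Cohen--Macaulay (MCM) modules, and it obviously contains $\cG(R)$. Since $M$ is MCM with $\rGdim_R(M)<\infty$, Theorem~\ref{cor-app} (together with Remark~\ref{eg}) forces $M \in \syz\,\cC(R)$, so we may write $M \cong \syz_R N \oplus F$ for some MCM module $N$ and some free module $F$. After absorbing the free summands of $N$ into $F$, we may assume $N$ has no free summand, so if $R^n \twoheadrightarrow N$ is a minimal cover and $M_0 := \syz_R N$, then $M_0 \subseteq \m R^n$. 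It remains to verify that $M_0$ is Ulrich.

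After the harmless faithfully flat extension to ensure an infinite residue field, the minimal multiplicity hypothesis gives a parameter ideal $q = (x_1,\ldots,x_d) \subseteq \m$ with $\m^2 = q\m$. Since $M_0 \subseteq \m R^n$, we obtain
\[
\m M_0 \subseteq \m^2 R^n = q\m R^n \subseteq qR^n.
\]
On the other hand, $N$ being MCM makes $x_1,\ldots,x_d$ an $N$-regular sequence, so $\Tor_1^R(R/q, N) = 0$ and tensoring $0 \to M_0 \to R^n \to N \to 0$ by $R/q$ remains exact. This yields the key identification $M_0 \cap qR^n = qM_0$, which, combined with the inclusion above, gives $\m M_0 \subseteq qM_0$. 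The reverse inclusion is trivial, so $\m M_0 = qM_0$, confirming that $M_0$ is an Ulrich $R$-module. Writing $N_0 := M_0$ and absorbing, we get $M \cong N_0 \oplus F'$ with $N_0$ Ulrich, as claimed.

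For the particular case $\m^2 = 0$, the ring is Artinian with $d = 0$, so $q = (0)$, and the Ulrich identity $\m M_0 = qM_0$ degenerates to $\m M_0 = 0$. Hence $M_0$ is a finite-dimensional $k$-vector space, necessarily $k^{\oplus a}$ for some $a\ge 0$, and the decomposition $M \cong k^{\oplus a} \oplus F$ follows.

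The main point that needs care is the identification $M_0 \cap qR^n = qM_0$ from the Tor-vanishing, and the reduction to an infinite residue field (so that a minimal reduction $q$ of $\m$ generated by $d$ elements exists); both are standard but should be stated explicitly. Everything else is a mechanical combination of Theorem~\ref{cor-app} with the defining identity $\m^2 = q\m$ of minimal multiplicity.
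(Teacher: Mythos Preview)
Your proof is correct and follows essentially the same route as the paper: both apply Theorem~\ref{cor-app} with $\lX=\cC(R)$ to place $M$ in $\syz\,\cC(R)$, then split off the free part and argue that the remaining syzygy summand is Ulrich. The only difference is that where the paper cites \cite[Proposition~1.6]{umm} for this last step (and \cite[Proposition~(1.2)]{buh} for the Artinian case), you unpack the argument directly via the identity $\m^2=q\m$ and the Tor-vanishing $\Tor_1^R(R/q,N)=0$; this is in effect the proof of the cited result, so the approaches coincide.
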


\begin{proof}Taking $\lX=\cC(R)$ (the full subcategory of all maximal Cohen--Macaulay modules) in Theorem \ref{cor-app}, it follows that $M\in \syz \cC(R)$. Hence we can write $M\cong F \oplus N$ for some $N\in \syz \cC(R)$ such that $R$ is not a direct summmand of $N$. Since $R$ has minimal multiplicity, it follows from \cite[Proposition 1.6]{umm} that $N$ is an Ulrich module. 
Finally, if $\m^2=0$, then $R$ is a Cohen--Macaulay ring of minimal multiplicity, and every $R$-module is maximal Cohen--Macaulay. Since $R$ is Artinian, all Ulrich modules are $k$-vector spaces (see \cite[Proposition (1.2)]{buh}). Hence, the claim follows. 
\end{proof}

\if0
\section{Testing the Gorenstein property via upper reducing Gorenstein dimension of the residue field}  
  
\com{move results with upper reducing dimensions to a third paper and keep only results using reducing dimensions}

In this section, we give some characterization of Gorenstein local rings in terms of upper reducing Gorenstein dimension of the residue field.  First we recall the definition of

\begin{chunk} \label{urdim} (\textbf{Upper reducing dimensions}  \cite[Definition 3.1]{at}) Let $R$ be a local ring, $M$ be an $R$-module, and let $\Hdim$ be a \emph{homological invariant} of $R$-modules, for example, $\Hdim=\pd$, $\Hdim=\CIdim$, or $\Hdim=\Gdim$.

We write $^*\rHdim(M)<\infty$ provided that there exist integers $r\geq 1$, $n_i\geq 0$, and short exact sequences of $R$-modules of the form 
\begin{equation}\tag{\ref{urdim}.1}
0 \to K_{i-1} \to K_{i} \to \Omega^{n_i}K_{i-1} \to 0,
\end{equation}
for each $i=1, \ldots r$, where $K_0=M$ and $\Hdim(K_r)<\infty$. 

If $^*\rHdim(M)<\infty$ and a sequence as in (\ref{urdim}.1) exists, then we call $\{K_0,  \ldots, K_r\}$ a \emph{reducing $\Hdim$-sequence} of $M$. 

The \emph{reducing invariant} $\Hdim$ of $M$ is defined as follows:
\begin{equation}\notag{}
^*\rHdim(M)=\inf\{ r\in \bN \cup \{0\}: \text{there is a reducing $\Hdim$-sequence }  K_0,  \ldots, K_r \text{ of }  M\}.
\end{equation}
We set $^*\rHdim(M)=0$ if and only if $\Hdim(M)<\infty$. \qed
\end{chunk}  

Similar to \cite[Corollary 3.4]{redring}, it can be seen that if $x\in R$ is $R$ and $M$-regular, then $^*\rGdim_{R/xR} M/xM\le \space ^*\rGdim_R M$. Similar to \cite[Corollary 3.13]{redring}, it can be shown that $^*\rGdim_R \syz_R N\le ^*\rGdim_R N$. 
 
It is clear from the definition that $\rGdim \le ^*\rGdim$. The following fact, which is \cite[Corollary 4.4]{at}, shows that requiring $^*\rGdim_R k<\infty$ puts very stringent condition on the ring, and in fact the result goes a long way towards proving that $^*\rGdim_R k<\infty$ implies $R$ is Gorenstein  

\begin{chunk} Let $(R,\m,k)$ be a local ring. If $k$ has finite upper reducing Gorenstein dimension, then $R$ has finite plexity.   
\end{chunk}  

As noted in \cite[Remark 4.5]{at}, finite plexity is not quite enough to conclude a ring is Gorenstein. 

The following result, which should be compared with \cite[Remark 4.5]{at}, shows that for local Cohen--Macaulay rings $^*\rGdim k\le 1$ implies $R$ is Gorenstein. This is in sharp contrast with \cite[Theorem 1.2]{redring}  which shows that $\rpdim_R k\le 1$ whenever $(R,\m,k)$ is a local Cohen--Macaulay ring of minimal multiplicity and $|k|=\infty$.  


\begin{prop}\label{upgor} Let $(R,\m,k)$ be a local ring of depth $t$. If there exist non-negative integers $s,n,a_0,...,a_n$ such that  $s\le t$, and $\syz^s_R k \oplus \left(\oplus_{i=0}^n \syz_R^{s+1+i}k^{\oplus a_i}\right)$ has upper reducing Gorenstein dimension $1$, then $R$ has type $1$. So in particular, if $R$ is Cohen--Macaulay and $^*\rGdim_R \syz^s_R k\le 1$ for some $0\le s\le t$, then $R$ is Gorenstein.  
\end{prop}      

\begin{proof} Since $\syz^{t-s}_R \left( \syz^s_R k \oplus \left(\oplus_{i=0}^n \syz_R^{s+1+i}k^{\oplus a_i}\right) \right)\cong \syz^t_R k \oplus \left(\oplus_{i=0}^n \syz_R^{t+1+i}k^{\oplus a_i}\right)$, so the assumption implies $^*\rGdim_R \syz^t_R k \oplus \left(\oplus_{i=0}^n \syz_R^{t+1+i}k^{\oplus a_i}\right)\le 1$.   

First we do the case of $t=\depth R=0$. By definition, we have exact sequence $$0\to k \oplus \left(\oplus_{i=0}^n \syz_R^{1+i}k^{\oplus a_i}\right) \to G\to \syz_R^{n_0}\left( k \oplus \left(\oplus_{i=0}^n \syz_R^{1+i}k^{\oplus a_i}\right)\right)\to 0$$ for some non-negative integer $n_0$, where $\Gdim_R G<\infty$, so $G$ is totally reflexive. Since $R$ has depth $0$, so $k\in \syz \mod R$, so we have $ k \oplus \left(\oplus_{i=0}^n \syz_R^{1+i}k^{\oplus a_i}\right)\in \syz \mod R$. So, we have exact sequence   

$0\to k \oplus \left(\oplus_{i=0}^n \syz_R^{1+i}k^{\oplus a_i}\right) \to G\to F$ for some finite free module $F$. By \cite[Theorem 2.4(1)]{gsy} and its proof, there exists finitely generated free module $H$ such that we have an exact sequence 

$0\to k \oplus \left(\oplus_{i=0}^n \syz_R^{1+i}k^{\oplus a_i}\right) \to H\xrightarrow{f} F$. Since every complex over a local ring is the direct sum of a minimal complex and a contractible one, so we may assume w.l.o.g. that $f$ has entries in $\m$. Then, \cite[Lemma 4.4(1)]{restf} implies $R$ has type $1$. 

When $\depth R=t>0$, choose $R$-regular sequence $x_1,\cdots,x_t\in \m\setminus \m^2$, which is a regular sequence on $M:=\syz^t_R k \oplus \left(\oplus_{i=0}^n \syz_R^{t+1+i}k^{\oplus a_i}\right)$ also (since $M$ is a $t$-th syzygy module). Put $\overline{(-)}:=(-)\otimes_R R/(x_1,...,x_t)R.$ Then, $^*\rGdim_{\overline R} \overline M\le ^*\rGdim_R M\le 1$. Now by \cite[Corollary 5.3]{summ}, we get $\overline M\cong k\oplus \left(\oplus_{j=1}^s \syz^j_{\overline R} k^{\oplus b_j}\right)$ for some integers $b_j$ and $s$ (note that by \cite[Corollary 5.3]{summ}, cutting down by $x_1,...,x_t$ will yield exactly one copy of $k$ from $\syz^t_Rk$, and none of $\syz^j_R k$ with $j\ge t+1$ will have a copy of $k$ after cutting down). Thus by the depth $0$ case, we have $\overline R$ has type $1$ i.e. $R$ has type $1$. 
\end{proof} 

Next we try to see if the Cohen--Macaulay assumption can be dropped from Proposition \ref{upgor}. Towards this, the following result will be crucial. 

\begin{prop}\label{infi} Let $(R,\m,k)$ be a local ring. Let $M$ be such that $\depth M\ge \depth R$ i.e. $M\in \X(R)$. If $^*\rGdim_R (M)\le 1$, then $M$ is a $b$-th syzygy module for any integer $b\ge 1$. 
\end{prop}

\begin{proof} If $^*\rGdim_R (M)=0$, then $\Gdim M<\infty$, so $M$ is totally reflexive, and there is nothing to prove. So assume $^*\rGdim_R (M)=1$. We have $M\in \syz \X(R)$ by Theorem \ref{cor-app}. We also have a exact sequence $0 \to M \to K \to \syz^n M \to 0$ for $n\ge 0$ and $\Gdim K<\infty$. Since $M\in \X(R)$, so $K\in \X(R)$, so $K$ is totally reflexive. Since $M$ is a syzygy module and $n\ge 0$ is an integer, so we have an embedding $0\to \syz^n M \to F$ for some free module $F$, so we have $0\to M \to K \to F$. Then, $M\in \syz^2(\mod R)$ by \cite[Theorem 2.4(1)]{gsy}. So, $\syz^n M\in \syz^{2+n}(\mod R)\subseteq \syz^2(\mod R)$. So we have exact sequence $0\to \syz^n M\to F_0\to F_1$, which gives $0\to M\to K\to F_0\to F_1$, so $M\in \syz^3(\mod R)$ by \cite[Theorem 2.4(1)]{gsy}. Continuing this way, we get $M$ is a $b$-th syzygy module for any integer $b\ge 1$. 
\end{proof}  

\begin{cor}\label{4.5} Let $(R,\m,k)$ be a local ring of depth $0$ and embedding dimension $\le 3$. If $^*\rGdim (k)\le 1$, then $R$ is Gorenstein.  
\end{cor}

\begin{proof} Since $\depth R=0$, so $\X(R)=\mod R$. We have $k$ is a $3$rd syzygy module by Proposition \ref{infi}. So, $R$ is Gorenstein by \cite[Corollary 2.13]{miller}.  
\end{proof}

\if
Recall that, if $R$ is a Cohen-Macaulay local ring with a canonial module $\omega_R$, then $R$ is Gorenstein if $\rGdim_R(\omega_R)<\infty$; see \cite[]{}. Using Corollary \ref{res2}, we give a characterization of Gorenstein rings we seek in terms of the reducing Gorenstein dimension; although the equivalence of part (i) and part (v) is known, we include it for completeness and give a different proof; see also \cite[]{}. 
\fi


\if0
\subsection{A class of rings over which Question \ref{qci} is true}

The following characterization of local complete intersection rings show that assuming the subcategory of all modules of finite $\rGdim$ contains a big enough subcategory which is closed under extensions, imposes strong conditions on the ring.  

\begin{prop} \label{p1ci} Let $(R,\m,k)$ be a deformation of a local CM ring of minimal multiplicity, i.e. $R=S/(x_1,\cdots,x_n)S$, where $(S,\mathfrak{n},k)$ is local CM ring of minimal multiplicity, and $x_1,\cdots,x_n \in \mathfrak n$ is an $S$-regular sequence. Let $d=\dim R$. Then, the following are equivalent: 

\begin{enumerate}[\rm(1)]
    \item $R$ is a complete intersection. 
    
    \item $\rpdim_R M<\infty$ for all $M\in \mod R$. 
    
    \item There exists a subcategory $\lX \subseteq \mod R$ which is closed under extensions, and satisfies $$\{M\in \mod_0 R \text{ is $(d+1)$-torsionfree}|\rpdim_R M<\infty\}\subseteq \lX \subseteq \{M\in \mod R| \rGdim_R M<\infty\}$$  
\end{enumerate}   
\end{prop}   

To give a proof of Proposition \ref{p1ci} we need some preparation.

\begin{rem} \label{hsyz} Let $N$ be a module and $\Hdim\in \{\pd,\Gdim\}$. Following a similar argument  as in \cite[5.3(i)]{CA}, it can  be shown (to modify the proof to make it work for $n_i=0$ too, set $F_i=G_i\oplus F_{i-1}^{\oplus 2}$ when $n_i=0$ for $i\ge 1$) that $\rHdim_R (\syz_R N) \le \rHdim_R (N)$.  
\end{rem}  

\begin{lem} \label{lemgood} Let $R$ be a local ring and let $M$ be an $R$-module. Assume there is a non zero-divisor $x\in \fm$ on $R$ such that $xM=0$. Let $\Hdim\in \{\pd,\Gdim\}$. Then it follows that $\rHdim_{R/xR}(M)\leq \rHdim_R(\Omega_R M)+1\le \rHdim_R(M)+1$.
\end{lem}

\begin{proof}  First we remark that by \cite[2.9.]{acck} and its proof, it holds that if $x\in \m$ is regular on $R$ and on a module $N$, then $\rHdim_{R/xR} (N/xN) \le \rHdim_R (N)$. So in particular, if $x\in \m$ is $R$-regular, then for any module $M$, we have $\rHdim_{R/xR} \left(\dfrac{\syz_R M}{x\cdot \syz_RM} \right) \le \rHdim_R \left(\syz_R M\right)$. 

Now, assume there is a non zero-divisor $x\in \fm$ on $R$ such that $xM=0$. Then, by tensoring the syzygy sequence of $M$ with $R/xR$, we obtain an exact sequence of $R$-modules of the form
\begin{equation}\tag{\ref{lemgood}.1}
0 \to \Tor_1^R(M,R/xR) \to \Omega_R M \otimes_R (R/xR) \to F\otimes_R(R/xR) \to M/xM \to 0,
\end{equation}
where $F$ is free. Note that $\Tor_1^R(M,R/xR) \cong M/xM=M$ since $xM=0$. Hence (\ref{lemgood}.1) yields the following exact sequence of $R/xR$-modules:
\begin{equation}\tag{\ref{lemgood}.2}
0 \to M \to \Omega_R M \otimes_R (R/xR) \to \Omega_{R/xR}(M) \to 0.
\end{equation}  
 
So we get 

$$\rHdim_{R/xR}(M)\leq \rHdim_{R/xR} \left(\dfrac{\syz_R M}{x\cdot \syz_RM} \right)+1\le \rHdim_R \left(\syz_R M\right)+1\le \rHdim_R(M)+1$$, where the last inequality is \ref{hsyz}.  
\end{proof}

\begin{lem}\label{regse} Let $x_1,\cdots,x_n$ be an $R$-regular sequence. Let $\Hdim\in \{\pd,\Gdim\}$. For any $R/(x_1,\cdots,x_n)R$-module $M$, we have $\rHdim_{R/(x_1,\cdots,x_n)R}(M)\le \rHdim_R(\syz^n_RM)+n\le \rHdim_R(M)+n$.
\end{lem}  

\begin{proof} Only need to prove $\rHdim_{R/(x_1,\cdots,x_n)R}(M)\le \rHdim_R(\syz^n_RM)+n$. We do this by induction on $n$, the base case $n=1$ being true by Lemma \ref{lemgood}. Now let $n>1$ and and set $S:=R/(x_1,\cdots,x_{n-1})R$. Then $x_n$ is $S$-regular, and $R/(x_1,\cdots,x_n)R=S/x_nS$. It follows, by the base case, that $$\rHdim_{R/(x_1,\cdots,x_n)R}(M)=\rHdim_{S/x_nS}(M)\le \rHdim_S(\syz_S M)+1$$ By the induction hypothesis, one has that $\rHdim_S(\syz_S M)\le \rHdim_R(\syz^{n-1}_{R}\syz_{R/(x_1,\cdots,x_n)R}M)+n-1$. So, it follows that $$\rHdim_{R/(x_1,\cdots,x_n)R}(M)\le \rHdim_R(\syz^{n-1}_{R}\syz_{R/(x_1,\cdots,x_n)R}M)+n-1+1$$ By \cite[4.2]{fiber19} we have  $\syz^{n-1}_R\syz_{R/(x_1,\cdots,x_{n-1})R}M\cong \syz^{n}_RM\oplus F$ for some free $R$-module $F$. So, it follows that $\rHdim_{R/(x_1,\cdots,x_n)R}(M)\le \rHdim_R(\syz^{n}_RM\oplus F)+n\le \rHdim_R(\syz^n_RM)+n$.  
\end{proof}



Deformations of G-regular rings need not be G-regular: in fact, if $S$ is G-regular, $x\in \m$ is $S$-regular, then $S/xS$ is G-regular if and only if $x\notin \m^2$ (\cite[4.6.]{Greg}). However, the following shows that deformations of G-regular local rings still have the desirable property that if a module has finite $\rGdim$, then it also has finite $\rpdim$

\begin{lem}\label{defo} Let $S$ be a G-regular local ring, and let $R=S/(x_1,\cdots,x_n)S$, where $x_1,\cdots,x_n$ is an $S$-regular sequence. Then, $\rpdim_R M\le n+\rpdim_S(\syz^n_SM)=n+\rGdim_S (\syz^n_SM)\le n+\rGdim_S (M)$, for every $R$-module $M$. 
\end{lem}

\begin{proof} We have $\rpdim_R(M)=\rpdim_{S/(x_1,\cdots,x_n)S}(M)\le \rpdim_S (\syz^n_SM)+n=\rGdim_S (\syz^n_SM)+n$, where the inequality is by Lemma \ref{regse} and the last equality holds since $S$ is G-regular.  
\end{proof}

\begin{lem}\label{mindef} Let $(R,\m,k)$ be a deformation of a local CM ring of minimal multiplicity, i.e. $R=S/(x_1,\cdots,x_n)S$, where $(S,\mathfrak{n},k)$ is local CM ring of minimal multiplicity, and $x_1,\cdots,x_n \in \mathfrak n$ is an $S$-regular sequence. Then, $\rpdim_R (k)<\infty$. 
\end{lem}  

\begin{proof} If $S$ is Gorenstein, then $S$ is a hypersurface, and so $R$ is a complete intersection, and there is nothing to prove. So assume $S$ is not Gorenstein. Then $S$ is G-regular. Then the claim follows by Lemmas \ref{defo} and \ref{min}. 
\end{proof}

\begin{proof}[Proof of Proposition \ref{p1ci}] $(1)\implies (2)\implies (3)$ is straightforward.
Only need to prove $(3)\implies (1)$: Assume the existence of a subcategory $\lX$ as in (3). 
By hypothesis, $R\in \lX$. Since $\syz^{d+1}_R k\in \mod_0 R$ and is $(d+1)$-torsionfree, and moreover $\rpdim_R \syz^{d+1}_R k < \infty$ by Corollary \ref{mindef}, so $\syz^{d+1}_R k \in \lX$. Since $\lX$ is closed under extensions, so $R*\syz^{d+1}_R k\subseteq \lX$. So, $\rGdim_R M<\infty$ for all $M\in R*\syz^{d+1}_R k$ by hypothesis on $\lX$. Then, $R$ is Gorenstein by Theorem\ref{Gdimcor}. So $S$ is Gorenstein. but $S$ has minimal multiplicity, so $S$ is a hypersurface. Hence, $R$ is a complete intersection. 
\end{proof}

\begin{cor}\label{golod} Let $R$ be the deformation of a Golod ring. 
Then the following conditions are equivalent:
\begin{enumerate}[\rm(i)]
\item $R$ is complete intersection.
\item $\rGdim_R(M)<\infty$ for each $R$-module $M$.   
\item There exists a resolving subcategory $\lX$ of $\mod R$ containing some syzygy of the residue field $k$  such that $\rGdim_R(M)<\infty$ for each $M\in \lX$.
\item There exists a category $\lX \subseteq \mod R$ closed under extensions and containing $R$, such that  $\syz^n k \in \lX$ for some $n\ge \depth R$, and  $\rGdim_R(M)<\infty$ for each $M\in \lX$.
\item  There exists $n\ge \depth R$ such that $\rGdim M<\infty$ for all $M\in R*\syz^n k$. 
\item There exists $M\in \X(R)$ such that $\id_R (M)<\infty$ and $\rGdim_R(M)<\infty$.  

\item $\rGdim M < \infty$ for every $R$-module $M$ which is eventually stably isomorphic to $k$. 
\end{enumerate} 
\end{cor}

\begin{proof} Let $R=S/(x_1,\cdots,x_n)S$ where $S$ is Golod. If $S$ is a hypersurface then $R$ is a complete intersection, and there is nothing to prove. Otherwise, $S$ is G-regular, and then we are done by Corollary \ref{cicor}.   
\end{proof}
\fi
\fi 
\section{Testing the complete intersection property via reducing projective dimension}\label{sec 4}

This section is devoted to a proof of the following theorem which contains Theorem\ref{main} and gives a partial affirmative answer to Question \ref{qci}.  

\begin{thm}\label{mainci}  Let $(R,\m,k)$ be a local ring and let $0 \le c \le 2$ be an integer. Then the following are equivalent:  
\begin{enumerate}[\rm(i)]
\item $R$ is a complete intersection of codimension at most $c$.
\item $\rpdim_R(M) \le c$ for each  $R$-module $M$. 
\item $R$ is Gorenstein and $\rpdim_R(k) \le c$.  
\item There exists an integer $n\ge \depth R$ such that  $\rpdim_R(\Tr_R \syz^n_R k)\le c$.       
\end{enumerate}  
\end{thm}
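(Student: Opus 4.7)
The plan is to establish the cycle $(i) \Rightarrow (ii) \Rightarrow (iii) \Rightarrow (i)$ together with $(i) \Rightarrow (iv) \Rightarrow (i)$.

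First I would dispatch the easy implications. For $(i) \Rightarrow (ii)$: if $R$ is a complete intersection of codimension $c' \le c$, then every $R$-module $M$ has finite complete intersection dimension by definition, hence has reducible complexity by \cite[Proposition 2.2]{bergh}; then \ref{rcxrpd}(ii) gives $\rpdim_R(M) = \cx_R(M)$, and \cite[5.6]{AGP} yields $\cx_R(M) \le \embdim(R) - \depth(R) = c' \le c$. The implication $(i) \Rightarrow (iv)$ is the same specialization to $\Tr_R \syz^n_R k$. For $(ii) \Rightarrow (iii)$: taking $M = k$ gives $\rpdim_R(k) \le c$, and since every $R$-module $M$ satisfies $\rGdim_R(M) \le \rpdim_R(M) \le c < \infty$, Theorem \ref{Gdimcor} forces $R$ to be Gorenstein.

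The heart of the argument is $(iii) \Rightarrow (i)$. Assume $R$ is Gorenstein and $\rpdim_R(k) \le c$. By Gulliksen's theorem, $R$ is a complete intersection if and only if $\cx_R(k) < \infty$, in which case $\cx_R(k)$ equals the codimension. Thus it suffices to prove $\cx_R(k) \le c$. For $c = 0$ the hypothesis forces $\pd_R(k) < \infty$, so $R$ is regular. For $c \in \{1,2\}$, fix a reducing $\pd$-sequence $K_0 = k, K_1, \ldots, K_r$ with $r \le c$, $\pd_R(K_r) < \infty$, and short exact sequences
$$
0 \to K_{i-1}^{\oplus a_i} \to K_i \to \syz^{n_i}_R K_{i-1}^{\oplus b_i} \to 0.
$$
Starting from $\cx_R(K_r) = 0$ and applying Proposition \ref{corcx} at each step, one propagates finiteness of complexity up the sequence, arriving at $\cx_R(k) = \cx_R(K_0) < \infty$. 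Once finiteness is secured, Corollary \ref{corcxrpd} gives $\cx_R(k) \le \rpdim_R(k) \le c$, and Gulliksen's theorem concludes $(i)$.

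For $(iv) \Rightarrow (i)$: from $\rpdim_R(\Tr_R \syz^n_R k) \le c < \infty$ we get $\rGdim_R(\Tr_R \syz^n_R k) < \infty$, so Theorem \ref{Gdimcor}(viii) yields that $R$ is Gorenstein. Since $n \ge \depth R$, the module $\syz^n_R k$ is totally reflexive over the Gorenstein ring $R$, hence so is $\Tr_R \syz^n_R k$; standard stable-category duality then gives $\cx_R(\Tr_R \syz^n_R k) = \cx_R(\syz^n_R k) = \cx_R(k)$. Running the argument of $(iii) \Rightarrow (i)$ with $\Tr_R \syz^n_R k$ in place of $k$ produces $\cx_R(\Tr_R \syz^n_R k) \le c$, whence $\cx_R(k) \le c$ and Gulliksen's theorem finishes.

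The main obstacle is the induction in $(iii) \Rightarrow (i)$: controlling how the complexities of the modules $K_{i-1}$ and $K_i$ in the reducing sequence are related through an exact sequence of the form $0 \to K_{i-1}^{\oplus a_i} \to K_i \to \syz^{n_i}_R K_{i-1}^{\oplus b_i} \to 0$. A naive long exact sequence of $\Tor(-,k)$ produces recurrences among the Betti numbers whose behavior depends sensitively on the multiplicities $a_i, b_i$, and only for short reducing sequences ($r \le 2$) can one robustly rule out exponential growth. This is where Proposition \ref{corcx} does the decisive work, and it is presumably also the reason the theorem is only established here for $c \le 2$, leaving the general case as the natural open extension of Question \ref{qci}.
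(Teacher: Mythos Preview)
Your easy implications $(i)\Rightarrow(ii)$, $(i)\Rightarrow(iv)$, and $(ii)\Rightarrow(iii)$ are correct and match the paper. The genuine gap is in $(iii)\Rightarrow(i)$, and it propagates to $(iv)\Rightarrow(i)$.

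You write that, starting from $\cx_R(K_r)=0$, Proposition~\ref{corcx} lets you ``propagate finiteness of complexity up the sequence'' to $\cx_R(k)<\infty$. But Proposition~\ref{corcx}(iii) says precisely that when $a_i>b_i$, knowing $\cx_R(K_i)<\infty$ only yields the dichotomy $\cx_R(K_{i-1})=\cx_R(K_i)$ \emph{or} $\cx_R(K_{i-1})=\infty$, and nothing in your argument excludes the second branch. Your closing paragraph notices the sensitivity to the multiplicities but supplies no mechanism; the restriction $r\le 2$ does not help by itself, since already for $r=1$ with $a_1>b_1$ the dichotomy is unresolved. The same issue recurs in your $(iv)\Rightarrow(i)$, and in addition the claim that ``standard stable-category duality'' gives $\cx_R(\Tr_R\syz^n_R k)=\cx_R(k)$ over a merely Gorenstein ring is not justified: equality of $\cx_R(M)$ and $\cx_R(M^*)$ for totally reflexive $M$ is a theorem over complete intersections (Avramov--Buchweitz), which is exactly what you are trying to prove.

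The paper closes this gap by exploiting the Gorenstein hypothesis to pass to a totally reflexive syzygy of $k$ and then using $R$-duality rather than complexity bookkeeping alone. For $c=1$, the argument of \ref{c3} uses that $\ind(\syz_R N)=\ind(N)$ for totally reflexive $N$ to force $a=b$ in the reducing sequence, whence $\cx_R(k)\le 1$ (Proposition~\ref{p1}). For $c=2$, Lemma~\ref{lem57} and Proposition~\ref{57} show that dualizing a reducing step swaps $a$ and $b$ while preserving $\rpdim$, so for $N=\syz^n_R k$ one obtains either $\cx_R(N)\le 2$ or $\cx_R(N^*)\le 2$ (Proposition~\ref{c1}); the second alternative is then handled by Proposition~\ref{l4}, which shows that finiteness of $\cx_R\big((\syz^n_R k)^*\big)$ already forces $R$ to be a complete intersection of the correct codimension. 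This duality trick---passing to $M^*$ to reverse the bad inequality $a>b$, and having a separate argument that bounds $\codim R$ from $\cx_R((\syz^n_R k)^*)$---is the missing idea in your proposal.
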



Since the case $c=0$ is trivial, we proceed and prove the cases $c=1$ and $c=2$ separately.  

On both cases, the key ingredient of the proof is the following theorem; in order not to interrupt the flow of the arguments, we defer the proof until the end of the paper.

\begin{thm}\label{corcx} Let $R$ be a local ring and let
$$
0 \to M^{\oplus a} \to K \to \Omega^n_R M^{\oplus b} \to 0 
$$
be a short exact sequence of $R$-modules such that $\cx_R(K) =c$ for some integer $c\geq 0$. Then the following hold:
\begin{enumerate}[\rm(i)]
\item
If $a < b$, then $\cx_R(M) =\cx_R(K)$.
\item	
If $a = b$, then $\cx_R(M)\le \cx_R(K) +1$.
\item
If $a > b$, then $\cx_R(M)= \cx_R(K)$ or $\cx_R(M)=\infty$.
\end{enumerate}
\end{thm}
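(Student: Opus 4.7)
My plan is to extract from the long exact $\Tor^R_*(-,k)$-sequence three comparison inequalities between the Betti numbers of $M$ and those of $K$, and then iterate them according to which of $a<b$, $a=b$, or $a>b$ holds.

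Set $f(i):=\beta_i(M)=\dim_k\Tor_i^R(M,k)$ and recall the dimension shift $\Tor_i^R(\syz^n_RM,k)\cong \Tor_{i+n}^R(M,k)$ for every $i\ge 1$. Tensoring the given short exact sequence with $k$ and reading off exactness of the resulting long sequence of $k$-vector spaces produces, for all $i\gg 0$, the three inequalities
\begin{align*}
(\mathrm{A})\ \beta_i(K)&\le a\,f(i)+b\,f(i+n),\\
(\mathrm{B})\ a\,f(i)&\le b\,f(i+n+1)+\beta_i(K),\\
(\mathrm{C})\ b\,f(i+n)&\le \beta_i(K)+a\,f(i-1).
\end{align*}
Inequality $(\mathrm{A})$ already forces $\cx_R(K)\le \cx_R(M)$, hence $c\le \cx_R(M)$ in all three cases, and makes each of (i)--(iii) trivial when $c=\infty$; so I assume $c<\infty$ and write $\beta_i(K)=O(i^{c-1})$.

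For parts (i) and (ii) I use $(\mathrm{C})$, which after the substitution $j=i+n$ reads
$$f(j)\le (a/b)\,f(j-n-1)+(1/b)\,\beta_{j-n}(K).$$
When $a<b$, the contraction factor $a/b<1$ permits honest iteration: iterating $k\sim j/(n+1)$ times, the boundary term $(a/b)^k f(j-k(n+1))$ decays exponentially in $j$ (since $f$ is bounded on the finite range $\{0,1,\dots,n\}$) and the accumulated sum is a convergent geometric series of size $O(j^{c-1})$. Hence $f(j)=O(j^{c-1})$, so $\cx_R(M)\le c$; combined with $c\le \cx_R(M)$ this gives (i). When $a=b$, the factor is $1$ and the same iteration yields a sum of roughly $j/(n+1)$ terms each of size $O(j^{c-1})$, giving $f(j)=O(j^{c})$ and hence $\cx_R(M)\le c+1$, which is (ii).

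For part (iii) I may assume $\cx_R(M)=d<\infty$ (otherwise there is nothing to show) and argue by contradiction: suppose $d>c$. This time I exploit $(\mathrm{B})$, rewritten as
$$f(i)\le (b/a)\,f(i+n+1)+(1/a)\,\beta_i(K),$$
whose contraction factor $b/a<1$ now points in the opposite direction. Iterating $k$ times and letting $k\to\infty$, the boundary term $(b/a)^k f(i+k(n+1))$ is killed by exponential decay against the polynomial bound $f(i+k(n+1))=O((i+k(n+1))^{d-1})$, while using the elementary estimate $(i+\ell(n+1))^{c-1}\le i^{c-1}(1+\ell(n+1))^{c-1}$ makes the accumulated geometric sum converge to $O(i^{c-1})$. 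Therefore $f(i)=O(i^{c-1})\subseteq O(i^{d-2})$, contradicting $\cx_R(M)=d$; so $c\ge d$, giving equality. The main technical obstacle is precisely this bookkeeping in the iterations: one has to choose the iteration depth $k$ as a function of the current index so that simultaneously the boundary term decays, the partial sum remains $O(i^{c-1})$ or $O(i^{c})$, and the implied constants are uniform in $i$; once that is fixed, each of (i)--(iii) falls out of a one-line comparison of growth rates.
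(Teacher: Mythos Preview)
Your proposal is correct and follows essentially the same route as the paper: both extract the three Tor-inequalities (A), (B), (C), use (A) to get $\cx_R(K)\le\cx_R(M)$ in all cases, iterate (C) with ratio $u=a/b\le 1$ for (i) and (ii), and iterate (B) with ratio $b/a<1$ for (iii). The only cosmetic difference is in the logical framing of (iii): the paper iterates (B) as a \emph{lower} bound $\beta_{i+r(n+1)}(M)\ge u^r\{\beta_i(M)-\text{const}\cdot i^{c-1}\}$ to exhibit exponential growth when $\cx_R(M)>c$, whereas you iterate the same inequality as an \emph{upper} bound $f(i)\le(b/a)^kf(i+k(n+1))+\sum(\cdots)$ and pass to $k\to\infty$ under the standing assumption $\cx_R(M)<\infty$; these are contrapositives of one another and use the identical convergent series $\sum_\ell (b/a)^\ell(1+\ell(n+1))^{c-1}$.
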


However the following result has been mentioned in \ref{rcxrpd}, we are now ready to prove it.

\begin{cor} \label{corcxrpd} Let $R$ be a local ring and let $M$ be an $R$-module.
\begin{enumerate}[\rm(i)]
\item If $\cx_R(M) < \infty$, then it follows that $\cx_R(M) \le \rpdim_R(M)$.
\item If $M$ has reducible complexity (e.g., if $\CIdim_R(M)<\infty$), then $\cx_R(M) = \rpdim_R(M)$.
\end{enumerate}
\end{cor}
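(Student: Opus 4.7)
The plan is to use Theorem \ref{corcx} as the main engine, iterated along a reducing projective dimension sequence.

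For part (i), I would proceed by induction on $\rpdim_R(M)$. If $\rpdim_R(M)=0$ then $\pd_R(M)<\infty$, so $\cx_R(M)=0$ and there is nothing to prove. Otherwise, set $r=\rpdim_R(M)$ and fix a reducing $\pd$-sequence $K_0=M,K_1,\ldots,K_r$ with $\pd_R(K_r)<\infty$ and short exact sequences
\[
0 \to K_{i-1}^{\oplus a_i} \to K_i \to \syz_R^{n_i} K_{i-1}^{\oplus b_i} \to 0 \qquad (i=1,\ldots,r).
\]
The preliminary step is to confirm that finiteness of complexity propagates along this sequence. Using the standard facts that $\cx_R(X\oplus Y)=\max\{\cx_R(X),\cx_R(Y)\}$, $\cx_R(\syz_R^n X)=\cx_R(X)$, and that in any short exact sequence the complexity of the middle term is bounded above by the maximum of the complexities of the outer terms, one reads off $\cx_R(K_i)\le \cx_R(K_{i-1})$. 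In particular, the standing assumption $\cx_R(M)<\infty$ forces $\cx_R(K_i)<\infty$ for every $i$.

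Now I would apply Theorem \ref{corcx} to each of the above sequences. In case (i) and case (ii) of that theorem the estimate $\cx_R(K_{i-1})\le \cx_R(K_i)+1$ is immediate; in case (iii) the theorem allows only $\cx_R(K_{i-1})=\cx_R(K_i)$ or $\cx_R(K_{i-1})=\infty$, and the second option is ruled out by the preceding paragraph. Thus in all three cases $\cx_R(K_{i-1})\le \cx_R(K_i)+1$. Iterating downward from $\cx_R(K_r)=0$ yields $\cx_R(M)=\cx_R(K_0)\le r=\rpdim_R(M)$.

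For part (ii), suppose $M$ has reducible complexity. The case $\pd_R(M)<\infty$ is trivial, so fix the reducible complexity data $0\to K_i\to K_{i+1}\to \syz_R^{n_i}K_i\to 0$ for $i=0,\ldots,r-1$ with $K_0=M$, $\pd_R(K_r)<\infty$, and $\cx_R(K_{i+1})<\cx_R(K_i)$. This is a reducing $\pd$-sequence (with all $a_i=b_i=1$), so $\rpdim_R(M)\le r$. On the other hand, the strict drop of complexity at each step gives $0=\cx_R(K_r)\le \cx_R(M)-r$, hence $r\le \cx_R(M)$. Combining with part (i) yields $\cx_R(M)=\rpdim_R(M)$.

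The only nontrivial point in the argument is the handling of case (iii) of Theorem \ref{corcx}, where one must rule out the $\infty$ alternative; this is exactly where the preliminary observation about monotonicity of complexity along the reducing sequence is needed. Everything else is bookkeeping.
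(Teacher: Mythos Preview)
Your proof is correct and follows essentially the same approach as the paper. For part (i) both arguments use Theorem \ref{corcx} along a reducing $\pd$-sequence, first observing that $\cx_R(K_i)\le\cx_R(K_{i-1})$ so that all complexities stay finite (this is precisely what excludes the $\infty$ alternative in case (iii)), and then reading off $\cx_R(K_{i-1})\le\cx_R(K_i)+1$ in each case. For part (ii) the paper simply cites \cite[Theorem 3.6]{at} for the inequality $\rpdim_R(M)\le\cx_R(M)$, whereas you give the underlying argument directly; your observation that a reducible-complexity sequence is itself a reducing $\pd$-sequence with $r\le\cx_R(M)$ is exactly what that citation provides.
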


\begin{proof} The claim in part (ii) is an immediate consequence of part (i): we know, if $M$ has  reducible complexity, then it follows that $\rpdim_R(M) \leq \cx_R(M)$; see \cite[Theorem 3.6]{at}. Hence it is enough to prove the claim in part (i).

Set $r=\rpdim_R(M)$. Note that we may assume $r<\infty$. We proceed by induction on $r$. The claim follows by definition in case $r=0$. We shall assume $r\geq 1$. Then there is a short exact sequence of $R$-modules
\begin{equation}
0 \to M^{\oplus a} \to K \to \Omega^n_R M^{\oplus b} \to 0,
\end{equation}
where $a, b \ge 1$, $n \ge 0$, and $\rpdim_R(K) = r - 1$. 
We note that $\cx_R(K)<\infty$ because $\cx_R(M)<\infty$.
By the induction hypothesis, we have that $\cx_R(K) \le \rpdim_R(K) = r- 1$. 

If $a \le b$, then $\cx_R (M) \le \cx_R (K) + 1 \le \rpdim_R (M)$ by Theorem \ref{corcx}(i)(ii). On the other hand, if $a > b$, then since $\cx M<\infty$, we have that $\cx_R (M) =\cx_R (K) <r=\rpdim_R (M)$ by Theorem \ref{corcx}(iii).
\end{proof}  

Next, we show a general result that a totally reflexive module and its $R$-dual have the same reducing projective dimension. This will be used for the proof of Theorem \ref{mainci}. 

\begin{lem} \label{lem57} Let $R$ be a local ring and $M$ be a totally reflexive $R$-module.
If there is a short exact sequence of $R$-modules
\begin{equation} \tag{\ref{lem57}.1}
0 \to M^{\oplus a} \to K \to \syz^n_R M^{\oplus b} \to 0
\end{equation}
such that $a, b\geq 1, n\geq 0$ are integers, then 
there is a short exact sequence
\begin{equation} \tag{\ref{lem57}.2}
0 \to (M^*)^{\oplus b} \to \syz^n_R(K^*)\oplus F \to \syz^n_R (M^*)^{\oplus a} \to 0,
\end{equation}
with some free $R$-module $F$. 
\end{lem}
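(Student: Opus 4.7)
My plan is to dualize (\ref{lem57}.1) and then take $n$th syzygies via the horseshoe lemma, choosing the projective resolution of $(\syz^n_R M)^*$ carefully so that its $n$th syzygy is \emph{exactly} $M^*$, rather than merely stably isomorphic to it. First I would observe that, since the class $\cG(R)$ of totally reflexive modules is closed under syzygies, direct sums, and extensions, the middle term $K$ in (\ref{lem57}.1) is itself totally reflexive; in particular $\Ext^1_R(\syz^n_R M^{\oplus b}, R)=0$, and dualizing (\ref{lem57}.1) yields the short exact sequence
\[
0 \to ((\syz^n_R M)^*)^{\oplus b} \to K^* \to (M^*)^{\oplus a} \to 0.
\]

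Next, starting from a minimal free resolution $\cdots \to Q_1 \to Q_0 \to M \to 0$, the truncation $0 \to \syz^n_R M \to Q_{n-1} \to \cdots \to Q_0 \to M \to 0$ dualizes (using that every $\syz^i_R M$ is totally reflexive, so $\Ext^{\ge 1}_R(\syz^i_R M, R)=0$) to the exact sequence
\[
0 \to M^* \to Q_0^* \to Q_1^* \to \cdots \to Q_{n-1}^* \to (\syz^n_R M)^* \to 0.
\]
Splicing this on the left with a projective resolution $\cdots \to P_1 \to P_0 \to M^* \to 0$ through the factorization $P_0 \twoheadrightarrow M^* \hookrightarrow Q_0^*$ produces a projective resolution of $(\syz^n_R M)^*$ whose $n$th syzygy is precisely $\ker(Q_0^* \to Q_1^*) = M^*$.

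Finally, I would apply the horseshoe lemma to the dualized sequence, using $b$ copies of the resolution just constructed for the first term and a minimal free resolution for $(M^*)^{\oplus a}$. The induced short exact sequence on $n$th syzygies reads
\[
0 \to (M^*)^{\oplus b} \to S_n \to \syz^n_R(M^*)^{\oplus a} \to 0,
\]
where $S_n$ is the $n$th syzygy of $K^*$ in the resulting horseshoe resolution. Since any two $n$th syzygies of $K^*$ differ by free summands on each side (Schanuel's lemma), $S_n \cong \syz^n_R(K^*) \oplus F$ for some free $R$-module $F$, yielding (\ref{lem57}.2). The only subtle point is arranging the projective resolution of $(\syz^n_R M)^*$ so that its $n$th syzygy equals $M^*$ on the nose; the rest is routine horseshoe bookkeeping.
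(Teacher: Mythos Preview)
Your proof is correct and follows essentially the same approach as the paper: dualize the sequence (using total reflexivity to ensure exactness), then take $n$th syzygies. The only cosmetic difference is that you build a specific resolution of $(\syz^n_R M)^*$ whose $n$th syzygy is $M^*$ on the nose, whereas the paper instead applies the horseshoe lemma with minimal resolutions and then invokes the stable isomorphism $M^*\cong \syz^n_R((\syz^n_R M)^*)\oplus G$, absorbing $G^{\oplus b}$ into the free module $F$ at the end.
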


\begin{proof} Applying $(-)^*$ to the sequence (\ref{lem57}.1), we get a short exact sequence
\begin{equation} \notag{} 
0 \to (\syz^n_R M^{\oplus b})^* \to K^* \to (M^{\oplus a})^* \to 0.
\end{equation}
Taking $n$-th syzygies, we obtain a short exact sequence
\[
0 \to \syz^n_R((\syz^n_R M)^*)^{\oplus b} \to \syz^n_R(K^*)\oplus F' \to \syz^n_R (M^*)^{\oplus a} \to 0
\]
for some free $R$-module $F'$. Since $M$ is totally reflexive, one has $M^*\cong \syz^n_R((\syz^n_R M)^*)\oplus G$ for some free $R$-module $G$. Therefore, putting $F:=F'\oplus G$, the above exact sequence gives 
\[
0 \to (M^*)^{\oplus b} \to \syz^n_R(K^*)\oplus F \to \syz^n_R (M^*)^{\oplus a} \to 0.
\]  
\end{proof} 

\begin{prop}\label{57} Let $R$ be a local ring and let $M$ be a totally reflexive $R$-module.
Then it follows $\rpdim M=\rpdim M^*$.
\end{prop}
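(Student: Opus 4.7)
The strategy is to prove the inequality $\rpdim_R M^*\le \rpdim_R M$ by induction on $r:=\rpdim_R M$, and then obtain the reverse inequality by applying the same statement to $M^*$, which is again totally reflexive, together with the reflexivity isomorphism $M^{**}\cong M$. This symmetry step reduces the work to a single direction.

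For the base case $r=0$, one has $\pd_R M<\infty$; since $M$ is totally reflexive, $\Gdim_R M=0$, and the identity $\pd_R M=\Gdim_R M$ whenever $\pd_R M<\infty$ forces $\pd_R M=0$. Hence $M$, and therefore $M^*$, are free, so $\rpdim_R M^*=0$.

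For the inductive step, fix a first reducing step $0\to M^{\oplus a}\to K\to \syz^n_R M^{\oplus b}\to 0$ witnessing $\rpdim_R K\le r-1$. Since totally reflexive modules form a resolving subcategory of $\mod R$, both $\syz^n_R M$ and $K$ are totally reflexive, so the inductive hypothesis applies and yields $\rpdim_R K^*\le r-1$. Applying Lemma \ref{lem57} produces the short exact sequence
$$0\to (M^*)^{\oplus b}\to \syz^n_R(K^*)\oplus F\to \syz^n_R(M^*)^{\oplus a}\to 0,$$
which is a valid first step of a reducing $\pd$-sequence for $M^*$, provided $\rpdim_R(\syz^n_R(K^*)\oplus F)\le r-1$. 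This last bound follows from the free-summand invariance of $\rpdim_R$ (\cite[Proposition 3.8]{redring}) combined with the monotonicity $\rpdim_R(\syz_R N)\le \rpdim_R N$; the latter is obtained by applying $\syz_R$ termwise to a reducing $\pd$-sequence for $N$, using \ref{C1}(iii) to produce short exact sequences of syzygies (with auxiliary free summands that are absorbed by free-summand invariance). Concatenating the above step with a reducing sequence for $\syz^n_R(K^*)\oplus F$ certifies $\rpdim_R M^*\le r$, completing the induction.

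The main technical nuisance will be the bookkeeping of the free summands generated by Lemma \ref{lem57}, by \ref{C1}(iii), and by iterated syzygies applied to short exact sequences. Every such summand is absorbed by the free-summand invariance of $\rpdim_R$, so no genuine obstruction arises; the argument is essentially formal once one has the three ingredients: total reflexivity of all intermediate $K_i$, Lemma \ref{lem57} for the dualization, and monotonicity of $\rpdim_R$ under syzygies.
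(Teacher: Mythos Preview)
Your proof is correct and follows essentially the same approach as the paper: induction on $\rpdim_R M$ using Lemma~\ref{lem57} for the dual sequence, combined with free-summand invariance and syzygy monotonicity of $\rpdim_R$ (the latter is \cite[Corollary~3.13]{redring}), followed by the symmetry argument via $M\cong M^{**}$. The only cosmetic difference is that you sketch the syzygy monotonicity from \ref{C1}(iii), whereas the paper simply invokes it as a known fact.
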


\begin{proof} 
It suffices to prove the inequality $\rpdim M^{\ast}\leq \rpdim M$ for any totally reflexive $R$-module.
Indeed, since $M^*$ is also a totally reflexive and $M \cong M^{**}$, we get the inequality $\rpdim M=\rpdim M^{\ast \ast}\leq \rpdim M^{\ast}$. 
Consequently, we obtain $\rpdim M=\rpdim M^*$.

To prove $\rpdim M^{\ast}\leq \rpdim M$, we assume $r:=\rpdim M<\infty$ and proceed by induction on $r$.

If $r=0$ (i.e., $\pd M<\infty$), then $M$ is free and hence so is $M^*$. This means that $\rpdim M^*=0$. Next, assume $r > 0$ and choose an exact sequence as (\ref{lem57}.1) in Lemma \ref{lem57} such that $\rpdim K=r-1$. Since $K$ is totally reflexive, the induction hypothesis yields $\rpdim K^*\leq \rpdim K$. Using \cite[Proposition 3.8]{redring}, we get $\rpdim (\syz^n_R(K^*)\oplus F)=\rpdim \syz^n_R(K^*)\le \rpdim K^*\le \rpdim K = r-1$. Thus the sequence (\ref{lem57}.2) gives us the inequality $\rpdim M^* \le r$.
\end{proof}

\if0
\begin{thm} \label{main} Let $R$ be a local ring and let $c=1,$ or $2$. Then, the following conditions are equivalent:  
\begin{enumerate}[\rm(a)]
\item $R$ is a complete intersection of codimension at most $c$.
\item $R$ is Gorenstein and $\rpdim_R(k) \le c$.
\item $\rpdim_R(M) \le c$ for each $R$-module $M$.
\end{enumerate}   
\end{thm}
\fi  



Now we proceed with the proof of $c=1$ case of Theorem \ref{mainci}.   

\subsection*{Proof of Theorem \ref{mainci} when $c=1$} 

\begin{chunk} \label{c2} Let $R$ be a complete local ring, $M$ be an $R$-module, and let $\ind(M)$ denote the number of non-free summands in a direct sum decomposition of $M$ by indecomposable modules(such a decomposition exists and is unique as $R$ is complete, see \cite[Corollary 1.10]{lw}). Clearly, $\ind(M)=0$ if and only if $M$ is free. 
Now assume $M$ is totally reflexive. Then, any direct summand $X$ of $M$ is totally reflexive, and moreover $X$ is free if and only if $X^*$ is free, if and only if $\syz_R X$ is free. So, we have that $\ind(M) \leq \ind(\Omega_R M)$ and $\ind(M)\le \ind(M^*)$. Similarly, $\ind(M^*)\le \ind(M^{**})=\ind(M)$. So we get $\ind(M^*)=\ind(M)$. Now remembering $M^*\cong F\oplus \syz_R((\syz_R M)^*)$ for some free $R$-module $F$, we get $\ind(M)\le\ind(\syz_R M)=\ind((\syz_R M)^*)\le \ind(\syz_R((\syz_R M)^*))=\ind(M^*)=\ind(M)$. Thus, $\ind(M)=\ind(\syz_R M)$. 
\end{chunk} 

\begin{chunk}  \label{c3} Let $R$ be a local ring and let $M$ be a totally reflexive $R$-module. Assume there exists a short exact sequence of $R$-modules $0 \to M^{\oplus a} \to F \to \Omega^n_R M^{\oplus b} \to 0$, where $a, b \ge 1$, $n \ge 0$ are integers and $F$ is free. Then it follows that $\cx_R(M)\leq 1$. 


Since completion commutes with syzygy and direct sum, completion of totally reflexive modules are totally reflexive, and complexity does not change under completion, so we may pass to completion and assume without loss of generality that $R$ is complete. Hence, we can talk about $\ind(-)$.  

Let $r=\ind(M)$. Since $ M^{\oplus a} \cong \Omega^{n+1}_R M^{\oplus b} \oplus G$ for some free $R$-module $G$, so we get $ra = \ind(M^{\oplus a})=\ind( \Omega^{n+1}_R M^{\oplus b}\oplus G)=\ind(\syz^{n+1}_R M^{\oplus b})=\ind( M^{\oplus b})=r b$. This yields $r=0$ or $a=b$. 
If $r=0$, then $M$ is a free $R$-module and hence $\cx_R(M) = 0$ by definition.
On the other hand, if $a=b$, then $M^{\oplus a}\cong F\oplus \Omega^{n+1}_R M^{\oplus a}$, and this implies $\syz_R M^{\oplus a}\cong \syz^{n+2}_R M^{\oplus a}$. From this isomorphism, we conclude that $M$ has bounded Betti numbers and hence $\cx_R(M) \le 1$.\qed  
\end{chunk}  

If $R$ is a local ring and $M$ is an $R$-module, then it is clear by definition that $\cx_R(M)=0$ if and only if $\rpdim_R(M)=0$. In the following we make a similar observation, connecting modules of bounded Betti numbers with those having reducing projective dimension $1$. For that we first prepare a lemma:   

\begin{lem}\label{period} Let $M$ be an $R$-module such that $M^{\oplus a}$ has a periodic resolution for some $a\ge 1$. Then $M$ also has a periodic resolution.   
\end{lem}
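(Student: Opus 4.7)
The plan is to pass to the $\mathfrak{m}$-adic completion $\widehat{R}$, cancel using Krull--Schmidt, and then descend back to $R$. By hypothesis, there exists $n\ge 1$ with $\syz^n_R(M^{\oplus a})\cong M^{\oplus a}$. Since a direct sum of minimal free resolutions is again a minimal free resolution, one has $\syz^n_R(M^{\oplus a})\cong (\syz^n_R M)^{\oplus a}$, and therefore
\[
(\syz^n_R M)^{\oplus a}\;\cong\; M^{\oplus a}.
\]

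Next I would apply $(-)\otimes_R\widehat R$. Because completion is exact on finitely generated modules and sends a minimal free resolution of $M$ to a minimal free resolution of $\widehat M$, we have $\widehat{\syz^n_R M}\cong \syz^n_{\widehat R}\widehat M$. Hence
\[
(\syz^n_{\widehat R}\widehat M)^{\oplus a}\;\cong\;\widehat M^{\oplus a}
\]
as $\widehat R$-modules. Since $\widehat R$ is a complete Noetherian local ring, Krull--Schmidt holds on the category of finitely generated $\widehat R$-modules, so $a$-fold direct summands can be cancelled, yielding $\syz^n_{\widehat R}\widehat M\cong \widehat M$.

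The final step is descent: for finitely generated modules $N_1,N_2$ over a Noetherian local ring $R$, $\widehat{N_1}\cong\widehat{N_2}$ as $\widehat R$-modules implies $N_1\cong N_2$ as $R$-modules. This is a classical fact that follows from the natural isomorphism $\Hom_R(N_1,N_2)\otimes_R\widehat R\cong \Hom_{\widehat R}(\widehat{N_1},\widehat{N_2})$ together with the observation that being an isomorphism is an open condition in the $\mathfrak{m}$-adic topology. Applying this with $N_1=\syz^n_R M$ and $N_2=M$ gives $\syz^n_R M\cong M$, so $M$ itself has a periodic resolution (of period dividing $n$).

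I expect no serious obstacle; the only nontrivial ingredients are the two standard facts above, namely Krull--Schmidt over the completion and the descent of isomorphisms from $\widehat R$ to $R$ for finitely generated modules.
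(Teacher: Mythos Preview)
Your proof is correct and follows essentially the same approach as the paper: both pass to the completion to use Krull--Schmidt, deduce $\syz^n_{\widehat R}\widehat M\cong\widehat M$ from $(\syz^n_{\widehat R}\widehat M)^{\oplus a}\cong\widehat M^{\oplus a}$, and then descend. The paper hides the descent in the phrase ``without loss of generality we may assume $R$ is complete'' and spells out the indecomposable decompositions explicitly, whereas you make the descent step explicit and package the comparison of multiplicities as a cancellation statement; these are cosmetic differences.
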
 

\begin{proof} Without loss of generality we may assume $R$ is complete. Assume there exists an integer $n\ge 1$ such that $M^{\oplus a} \cong \syz^n_R (M^{\oplus a}).$ Write $\syz^n_R M\cong \oplus_{i=1}^r K_i^{\oplus b_i}$ where $K_i$'s are mutually non-isomorphic indecomposable modules. Then we have $\syz^n_R M^{\oplus a} \cong \oplus_{i=1}^r K_i^{\oplus ab_i}$. Write $M\cong \oplus_{i=1}^s N_i^{\oplus c_i}$ where $N_i$'s are mutually non-isomorphic indecomposable modules. Then, $\oplus_{i=1}^s N_i^{\oplus ac_i}\cong M^{\oplus a}\cong \syz^n_R M^{\oplus a}\cong \oplus_{i=1}^r K_i^{\oplus ab_i}$ and uniqueness of indecomposable decomposition implies $s=r$, and after a permutation, we can write $N_i\cong K_i$ and $ac_i=ab_i$. Hence, $c_i=b_i$ and $M\cong \oplus_{i=1}^r K_i^{\oplus b_i}\cong \syz^n_R M$.  
\end{proof}

\begin{prop} \label{p1} Let $R$ be a local ring and let $M$ be an $R$-module. 
\begin{enumerate}[\rm(i)]
\item If $\Gdim_R(M)<\infty$ and $\rpdim_R(M)\le 1$, then $\Omega^{n} M$ has a periodic resolution for some $n\geq 0$.
\item If $R$ is Gorenstein and $\rpdim_R(k)\leq 1$, then $R$ is a hypersurface.

\item If $M$ has finite complexity and $\rpdim_R (M)\le 1$, then $\syz^n M$ has periodic resolution for some $n>0$.  


\item If there exists an integer $n\ge \depth R$ such that  $\rpdim_R(\Tr_R \syz^n_R k)\le 1$, then $R$ is a hypersurface.
\end{enumerate}   
\end{prop}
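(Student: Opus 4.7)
The plan is to establish (i)--(iv) in order, with each part feeding into the next. For (i), the case $\rpdim_R(M)=0$ is trivial since then $\pd_R(M)<\infty$ and $\syz^n_R M=0$ for $n\gg 0$. So assume $\rpdim_R(M)=1$, with defining sequence $0\to M^{\oplus a}\to K\to \syz^n_R M^{\oplus b}\to 0$ and $\pd_R(K)<\infty$. Choose $m\ge\max\{\pd_R K,\Gdim_R M\}$; applying \ref{C1}(iii) and absorbing free summands produces an exact sequence of the form $0\to N^{\oplus a}\to F\to \syz^n_R N^{\oplus b}\to 0$ with $F$ free and $N:=\syz^m_R M$ totally reflexive. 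Exactly as in the proof of \ref{c3}, the indecomposable count over $\widehat R$ (using \ref{c2} to get $\ind(N)=\ind(\syz^j_R N)$ for all $j\ge 0$) gives $\ind(N)\cdot(a-b)=0$, so either $N$ is free (whence $\pd_R M<\infty$) or $a=b$. In the latter case, Schanuel gives $N^{\oplus a}$ stably isomorphic to $\syz^{n+1}_R N^{\oplus a}$; taking one further syzygy turns this into the honest isomorphism $\syz_R N^{\oplus a}\cong \syz^{n+2}_R N^{\oplus a}$, so $\syz_R N^{\oplus a}$ has a periodic resolution, and Lemma \ref{period} produces the desired periodicity of $\syz_R N=\syz^{m+1}_R M$.

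Part (ii) is immediate from (i): since $R$ is Gorenstein, $\Gdim_R(k)=\depth R<\infty$, so applying (i) to $M=k$ yields a periodic $\syz^n_R k$, hence $\cx_R(k)\le 1$. Gulliksen's theorem then forces $R$ to be a complete intersection of codimension $\cx_R(k)\le 1$, i.e.\ a hypersurface.

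For (iii), the case $\rpdim_R(M)=0$ is again trivial. In the case $\rpdim_R(M)=1$ the defining sequence $0\to M^{\oplus a}\to K\to \syz^n_R M^{\oplus b}\to 0$ has $\cx_R(K)=0$, so Theorem \ref{corcx} rules out $a\ne b$ (in both subcases it forces $\cx_R M=0$, using the finite-complexity hypothesis when $a>b$); hence only $a=b$ remains. Taking $m\ge \pd_R(K)$, applying \ref{C1}(iii), and using Schanuel, we see that $\syz^m_R M^{\oplus a}$ is stably isomorphic to $\syz^{m+n+1}_R M^{\oplus a}$. One further syzygy upgrades this to an honest isomorphism $\syz^{m+1}_R M^{\oplus a}\cong \syz^{m+n+2}_R M^{\oplus a}$, so $\syz^{m+1}_R M^{\oplus a}$ has a periodic resolution, and Lemma \ref{period} makes $\syz^{m+1}_R M$ periodic.

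For (iv), the inequality $\rGdim_R\le \rpdim_R$ combined with Theorem \ref{Gdimcor}((viii)$\Rightarrow$(i)) gives that $R$ is Gorenstein. Setting $N:=\syz^n_R k$, the hypothesis $n\ge \depth R$ and Gorensteinness of $R$ make $N$ totally reflexive, and the defining exact sequence $0\to N^*\to P_0^*\to P_1^*\to \Tr_R N\to 0$ shows that $N^*$ is a second syzygy of $\Tr_R N$ up to free summand. Invoking $\rpdim_R(\syz_R A)\le \rpdim_R(A)$ from \cite[Proposition 3.8]{redring} twice, $\rpdim_R(N^*)\le \rpdim_R(\Tr_R N)\le 1$, and Proposition \ref{57} now gives $\rpdim_R(\syz^n_R k)=\rpdim_R(N)=\rpdim_R(N^*)\le 1$. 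Part (i) applied to $\syz^n_R k$ (which has $\Gdim_R=0$) then yields a periodic $\syz^{n+l}_R k$, so $\cx_R(k)\le 1$, and the Gulliksen argument from (ii) concludes that $R$ is a hypersurface. The main obstacle throughout is the indecomposable-counting argument (\ref{c2}--\ref{c3}) that cleanly forces $a=b$ in (i); it relies crucially on total reflexivity and has no obvious direct analogue when $\Gdim_R M$ is infinite, which is why (iii) is instead handled via Theorem \ref{corcx}.
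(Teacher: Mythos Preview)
Your proof is correct and follows essentially the same route as the paper's. The only cosmetic difference is in (i): the paper replaces $M$ by $N=\syz_R^{\depth R}M$ first (using \cite[Corollary 3.13]{redring}) and then takes a fresh reducing $\pd$-sequence for $N$, whose middle term is free by Auslander--Buchsbaum since $N,\syz^n_RN\in\cC(R)$; you instead take $m$-th syzygies of the original reducing sequence via \ref{C1}(iii), which achieves the same exact sequence more directly. One small citation slip in (iv): the inequality $\rpdim_R(\syz_R A)\le\rpdim_R(A)$ is \cite[Corollary 3.13]{redring}, not Proposition 3.8 (the latter is the free-summand invariance, which you also need to pass from $\syz^2\Tr_R N$ to $N^*$).
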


\begin{proof} To prove part (i), we assume $\Gdim_R(M)<\infty$ and $\rpdim_R(M)\le 1$. If $\rpdim_R(M)=0$, then $\pd_R(M)<\infty$ in which case there is nothing to prove. Therefore, we can assume $\rpdim_R(M)=1$. Set $d=\depth(R)$ and $N=\Omega_R^d M$. Then $N$ is totally reflexive and $\rpdim_R(N)=1$ ($\rpdim N\ne 0$ since $N$ has infinite projective dimension). Let $0 \to N^{\oplus a} \to F \to \Omega^n_R N^{\oplus b} \to 0$ be a reducing $\pd$-sequence. It follows from the argument of \ref{c3} that $a=b$, hence we see that $N^{\oplus a}$ has periodic resolution and so does $N=\syz^d M$ by Lemma \ref{period}.   

Note, if $R$ is Gorenstein and $\rpdim_R(k)\leq 1$, then part (i) implies that $\cx_R(k)\leq 1$, i.e., $R$ is a hypersurface. Therefore part (ii) holds. 

For part (iii), again we may assume $\rpdim_R(M)=1$. Consider a reducing sequence  \begin{align}
0 \to M^{\oplus a_0} \to K_{1} \to \Omega^{n_0}_R M^{\oplus b_0} \to 0  
\end{align}
such that  $K_1$ has finite projective dimension, $a_0,b_0$ are positive integers. Since $M$ has infinite projective dimension, so $\cx K_1=0<\cx M<\infty$ implies $a_0=b_0$ by  Theorem \ref{corcx}(1) and (3).  So we have the short exact sequence $0 \to M^{\oplus a_0} \to K_{1} \to \Omega^{n_0}_R M^{\oplus a_0} \to 0$. Since $\syz^n_R K_1$ is free for all $n\gg 0$, we get $\syz^n_R M^{\oplus a_0} \oplus F \cong \syz^{n+n_0+1}_R M^{\oplus a_0}$ and by taking more syzygies, we get $\syz^n_R M^{\oplus a_0}  \cong \syz^{n+n_0+1}_R M^{\oplus a_0}\cong \syz^{n_0+1}_R(\syz^n_R M^{\oplus a_0})$ for all $n\gg 0$. Thus, $\syz^n_R M$ has periodic resolution by Lemma \ref{period}.  


(iv) The hypothesis along with Theorem\ref{Gdimcor} implies that $R$ is Gorenstein. Let $d=\dim R$. There exists a free module $F$ such that $\syz^2\Tr_R \syz^n_R k \oplus F\cong  (\syz^{n}_R k)^*$. By \cite[Corollary 3.13]{redring}, we have $\rpdim_R(\syz^2\Tr_R \syz^n_R k)\le 1$. By \cite[Proposition 3.8]{redring}, $\rpdim_R(\syz^2\Tr_R \syz^n_R k  \oplus F)=\rpdim_R((\syz^{n}_R k)^*)\le 2$. Since $R$ is Gorenstein and $n\ge \dim R$, $(\syz^{n}_R k)^*$ and $\syz^n_R k$ are totally reflexive. Hence we get $\rpdim_R(\syz^n_R k)=\rpdim_R((\syz^n_Rk)^*)\le 1$ by Proposition \ref{57}. If $\rpdim_R(\syz^n_R k)=0$, then by definition $\pd_R(\syz^n_R k)<\infty$, $R$ is regular. Otherwise $\rpdim_R(\syz^n_R k)=1$, and then $R$ is a hypersurface by part (i).
\end{proof}


\begin{proof}[Proof of  $c=1$ case of Theorem \ref{mainci}]  Note that (ii) $\Longrightarrow$ (iii) follows from Theorem \ref{Gdimcor} and (iii)  $\Longrightarrow$ (i) is due to Proposition \ref{p1}(ii). Since (i) $\Longrightarrow$ (ii) is clear from Corollary \ref{corcxrpd}, so this proves $\rm{(i)}\iff\rm{(ii)}\iff \rm{(iii)}$. Finally, $\rm{(i)}\iff\rm{(iv)}$ also follows by Proposition \ref{p1}(iv) and Corollary \ref{corcxrpd}.
\end{proof}  

\subsection*{Proof of Theorem \ref{mainci} when $c=2$}

\if0
\begin{lem} \label{212}
Let $R$ be a local ring, and let $\Hdim\in \{\pd,\Gdim\}$.
\begin{enumerate}[(a)]
\item Let $M$ and $F$ be $R$-modules such that $F$ is free.
Then $\rHdim(M\oplus F)=\rHdim(M)$.
\item Let $0 \to F \to N \to M \to 0$ be a short exact sequence such that $F$ is free.
Then $\rHdim(M)\le \rHdim(N)$.
\end{enumerate}
\end{lem}

\begin{proof}
If we have a reducing $\Hdim$-sequence $K_0,\dots,K_r$ of an $R$-module $M$, then for each $R$-free module $F$ we can take suitable free $R$-modules $G_1,G_2,\dots,G_r$ which give a reducing $\Hdim$-sequence of $K_0, K_1\oplus G_1,\dots, K_r\oplus G_r$ of $M\oplus F$.
Therefore we have an inequality $\rHdim(M\oplus F)\le \rHdim(M)$.
It means that on the assertion $(\text{a})$, we only need to check the converse inequality.

We prove $(\text{a})$ and $(\text{b})$ simultaneously by showing that the following claim holds.
\begin{claim*}
Let $r\ge 0$ be an integer.
Then 
\begin{enumerate}
\item[$(\text{a}_r)$] Let $M$ and $F$ be $R$-modules such that $F$ is free.
If $\rHdim(M\oplus F)\le r$, then $\rHdim(M)\le \rHdim(M\oplus F)$.
\item[$(\text{b}_r)$] Let $0\to F \to N \to M \to 0$ be an exact sequence of $R$-modules such that $F$ is free.
If $\rHdim(N)\le r$, then $\rHdim(M)\le \rHdim(N)$.
\end{enumerate}
\end{claim*}
Obviously, both $(\text{a}_0)$ and $(\text{b}_0)$ hold.
We show that $(\text{b}_{r-1})$ implies $(\text{a}_r)$, then $(\text{a}_r)$ with $(\text{b}_{r-1})$ implies $(\text{b}_r)$, and so induction on $r$ works.
Thus let $r$ be positive.
Assume $(\text{b}_{r-1})$ holds.
Let $M$ and $F$ be $R$-modules such that $F$ is free and $\rHdim(M\oplus F)\le r$.
Then there exists a short exact sequence $0 \to (M\oplus F)^{\oplus a} \to K \xrightarrow[]{p} \syz^{n}(M\oplus F)^{\oplus b} \to 0$, where $a,b\ge 1$, $n\ge 0$ and $\rHdim(K)=\rHdim(M\oplus F)-1$.
Take a free $R$-module $G$ such that $\syz^n(M\oplus F)^{\oplus b}\cong \syz^nM^{\oplus b}\oplus G$.
Since the map $K \to \syz^n(M\oplus F)^{\oplus b}$ is surjective and $G$ is free, $K$ has a decomposiion $K=K'\oplus L$, where $L$ is isomorphic to $G$ and maps on to $G$ via $p$, and hence we can remove $G$ from the sequence to get a short exact sequence $0\to (M\oplus F)^{\oplus a} \xrightarrow[]{\alpha} K' \to \syz^{n}M^{\oplus b} \to 0$.
Consider a commutative diagram
\[
\begin{tikzcd}
& 0 \ar[d] & 0 \ar[d] & &\\
& F^{\oplus a} \ar[d] \ar[r,equal] & F^{\oplus a} \ar[d] & &\\
0 \ar[r] & (M\oplus F)^{\oplus a} \ar[r,"\alpha"] \ar[d] & K' \ar[r] \ar[d] & \syz^{n}M^{\oplus b} \ar[r] \ar[d,equal] & 0\\
0 \ar[r] & M^{\oplus a} \ar[r] \ar[d] & X \ar[r] \ar[d] & \syz^nM^{\oplus b} \ar[r] & 0\\
& 0 & 0 & &
\end{tikzcd}
\]
which is obtained by taking push-out of $\alpha\colon (M\oplus F)^{\oplus a} \to K'$ with a canonical surjection $(M\oplus F)^{\oplus a} \to M^{\oplus a}$, so rows and columns are exact. 
Since $\rHdim(K)=\rHdim(M\oplus F)-1\le r-1$, we may apply $(\text{b}_{r-1})$ to the vertical exact sequence $0 \to F^{\oplus a} \to K' \to X \to 0$ to see $\rHdim(X)\le \rHdim(K)$.
Then we also have a short exact sequence $0 \to M^{\oplus a} \to X \to \syz^n M^{\oplus b} \to 0$, which certificates $\rHdim (M)\le \rHdim(X)+1\le \rHdim(K)+1=\rHdim(M\oplus F)$.

Now assume $(\text{b}_{r-1})$ and $(\text{a}_r)$ holds.
Consider a short exact sequence $0 \to F \to N \to M \to 0$ such that $F$ is free and $\rHdim(N)\le r$.
Note that $\syz^n N\cong \syz^n M$ holds for each $n\ge 1$.
Take a short exact sequence $0 \to N^{\oplus a} \to K \to \syz^n N^{\oplus b} \to 0$, where $a,b\ge 1$, $n\ge 0$ and $\rHdim(K)=\rHdim(N)-1$.
Observe the following push-out diagram  
\[
\begin{tikzcd}
& 0 \arrow[d] & 0 \arrow[d] & & \\
& F^{\oplus a} \arrow[d] \arrow[r, equal] & F^{\oplus a} \arrow[d] & & \\
0 \arrow[r] & N^{\oplus a} \arrow[d] \arrow[r] & K \arrow[r] \arrow[d] & \Omega^n N^{\oplus b} \arrow[r] \arrow[d, equal] & 0 \\
0 \arrow[r] & M^{\oplus a} \arrow[d] \arrow[r] & L \arrow[r] \arrow[d]  & \Omega^n N^{\oplus b} \arrow[r] & 0 \\
& 0 & 0 & &  
\end{tikzcd}
\]
with exact rows and columns.
Since $(\text{b}_{r-1})$ holds and $\rHdim(K)\le r-1$, the middle vertical row gives $\rHdim (L)\le \rHdim (K)$.
We also have a short exact sequence $0\to M^{\oplus a} \to L \to \syz^n N^{\oplus b} \to 0$.
Thus if $n\ge 1$, by using $\syz^n M\cong \syz^n N$, it follows that $\rHdim (M)\le \rHdim (L)+1\le \rHdim (K)+1=\rHdim(N)$.
If $n=0$, then we further take a pull-back diagram
\[
\begin{tikzcd}
 & & 0 \ar[d] & 0 \ar[d] & \\
0 \ar[r] & M^{\oplus a} \ar[r] \ar[d,equal] & Y \ar[r] \ar[d] & F^{\oplus b} \ar[r] \ar[d] & 0 \\
0 \ar[r] & M^{\oplus a} \ar[r] & L \ar[r] \ar[d] & N^{\oplus b} \ar[r] \ar[d] & 0\\
 & & M^{\oplus b} \ar[r,equal] \ar[d] & M^{\oplus b} \ar[d] &\\
 & & 0 & 0 &
\end{tikzcd}
\]
with exact rows and columns.
Since $F$ is free, $Y$ splits into a direct sum $M^{\oplus a}\oplus F^{\oplus b}$.
It shows that there exists a short exact sequence $0 \to M^{\oplus a}\oplus F^{\oplus b} \to L \to M^{\oplus b} \to 0$.
After adding free modules in each terms if needed,  we have a short exact sequence of the form $0 \to (M \oplus F^{\oplus c})^{\oplus a} \to L \oplus F^{\oplus d} \to (M \oplus F^{\oplus c})^{\oplus b} \to 0$.
Hence we see that $\rHdim(M\oplus F^{\oplus c})\le \rHdim(L \oplus F^{\oplus d})+1\le rHdim(L) + 1 \le \rHdim(K)+1=\rHdim(N)$ for some free $R$-module $G$.
Then using the assumption $(\text{a}_r)$, we get $\rHdim(M)\le \rHdim(M\oplus F^{\oplus c}) \le \rHdim(N)$.
\end{proof}

\begin{lem} \label{lem57} Let $(R,\fm)$ be a local ring and $M$ be a totally reflexive $R$-module.
If there is a short exact sequence of $R$-modules
\begin{equation} \tag{\ref{lem57}.1}
0 \to M^{\oplus a} \to K \to \syz^n M^{\oplus b} \to 0
\end{equation}
such that $a, b\geq 1, n\geq 0$ are integers, then 
there is a short exact sequence
\[
0 \to (M^*)^{\oplus b} \to \syz^n(K^*)\oplus F \to \syz^n (M^*)^{\oplus a} \to 0,
\]
with some free $R$-module $F$. 
\end{lem}

\begin{proof} Applying $(-)^*$ to the sequence (\ref{lem57}.1), we get a short exact sequence
\begin{equation} \notag{} 
0 \to (\syz^n M^{\oplus b})^* \to K^* \to (M^{\oplus a})^* \to 0.
\end{equation}
Taking $n$-th syzygies, we obtain a short exact sequence
\[
0 \to \syz^n((\syz^n M)^*)^{\oplus b} \to \syz^n(K^*)\oplus F' \to \syz^n (M^*)^{\oplus a} \to 0
\]
for some free $R$-module $F'$. Since $M$ is totally reflexive, so $M^*\cong \syz^n((\syz^n M)^*)\oplus G$ for some free $R$-module $G$. So putting $F:=F'\oplus G$, the above exact sequence gives 
\[
0 \to (M^*)^{\oplus b} \to \syz^n(K^*)\oplus F \to \syz^n (M^*)^{\oplus a} \to 0.
\]  
\end{proof}

\begin{prop}\label{57} Let $(R,\fm)$ be a local ring and let $M$ be a totally reflexive $R$-module.
Then it follows $\rpdim M=\rpdim M^*$.
\end{prop}

\begin{proof} We proceed to prove $\rpdim M^{\ast}\leq \rpdim M$. Hence we assume $\rpdim M<\infty$ and proceed by induction on $\rpdim(M)$.

If $\rpdim M=0$, then $\pd M<\infty$, so $M$ is free, so $M^*$ is free, hence $\rpdim M^*=0$. We can choose an exact sequence as (\ref{lem57}.1) in Lemma \ref{lem57} such that $\rpdim K=\rpdim M-1$. Since $K$ is totally reflexive, so by induction hypothesis, we get $\rpdim K^*\leq \rpdim K$. So, using \cite[Proposition 3.8]{redring}, we get $\rpdim (\syz^n(K^*)\oplus F)=\rpdim \syz^n(K^*)\le \rpdim K^*\le \rpdim K$. This completes the induction argument and shows 
$\rpdim M^{\ast}\leq \rpdim M$.

Now we apply the above argument to $M^{\ast}$ and conclude $\rpdim M=\rpdim M^{\ast \ast}\leq \rpdim M^{\ast}$. 
Consequently we obtain $\rpdim M=\rpdim M^*$.
\end{proof}
\fi  
\begin{prop} \label{c1} Let $R$ be a local ring and let $M$ be a totally reflexive $R$-module. Assume $\rpdim_R(M)\leq 2$.
Then it follows that $\cx_R(M)\leq 2$ or $\cx_R(M^*) \leq 2$.
\end{prop}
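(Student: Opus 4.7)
My approach is to exploit the $R$-duality between $M$ and $M^\ast$ to handle cases where Theorem~\ref{corcx} by itself fails to bound $\cx_R(M)$. The case $\pd_R(M)<\infty$ being trivial, I would assume $\rpdim_R(M)\in\{1,2\}$ and pick a reducing $\pd$-sequence $M=K_0,K_1,\ldots,K_r$ with $r\in\{1,2\}$, fitting into short exact sequences
\[
E_i\colon\ 0\to K_{i-1}^{\oplus a_i}\to K_i\to \syz^{n_i}_R K_{i-1}^{\oplus b_i}\to 0\qquad (i=1,\ldots,r),
\]
with $\pd_R(K_r)<\infty$. Since totally reflexive modules form a resolving class closed under syzygies, every $K_i$ is totally reflexive; in particular $K_r$, being totally reflexive of finite projective dimension, is free, and so is $K_r^\ast$. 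Applying Lemma~\ref{lem57} to each $E_i$ then yields the dualized short exact sequences
\[
E_i^\ast\colon\ 0\to (K_{i-1}^\ast)^{\oplus b_i}\to \syz^{n_i}_R(K_i^\ast)\oplus F_i\to \syz^{n_i}_R (K_{i-1}^\ast)^{\oplus a_i}\to 0.
\]

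The next step is to bound $\cx_R(K_1)$ and $\cx_R(K_1^\ast)$ by $1$. If $r=1$ this is immediate since $K_1$ is free. If $r=2$, the middle terms of $E_2$ and $E_2^\ast$ both have complexity $0$, so I would split into cases according to the sign of $a_2-b_2$. If $a_2<b_2$, Theorem~\ref{corcx}(i) applied to $E_2$ forces $\cx_R(K_1)=0$, so $K_1$ (totally reflexive with finite $\pd$) is free, whence $K_1^\ast$ is free as well. The case $a_2>b_2$ is symmetric via $E_2^\ast$. When $a_2=b_2$, Theorem~\ref{corcx}(ii) applied to $E_2$ and to $E_2^\ast$ gives $\cx_R(K_1)\le 1$ and $\cx_R(K_1^\ast)\le 1$ respectively.

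Finally, I would compare $a_1$ and $b_1$ and apply Theorem~\ref{corcx} once more. When $a_1\le b_1$, parts (i) or (ii) of Theorem~\ref{corcx} applied to $E_1$ give $\cx_R(M)\le \cx_R(K_1)+1\le 2$. When $a_1>b_1$, the sequence $E_1^\ast$ instead falls under part~(i) of Theorem~\ref{corcx} (since $b_1<a_1$), producing $\cx_R(M^\ast)=\cx_R(K_1^\ast)\le 1$. In either case, $\cx_R(M)\le 2$ or $\cx_R(M^\ast)\le 2$, as desired.

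The main obstacle is the branch $a>b$ in Theorem~\ref{corcx}, which only guarantees $\cx_R(M)\in\{\cx_R(K),\infty\}$ and thus cannot, on its own, rule out $\cx_R(M)=\infty$. The point of the argument is that this troublesome branch for the sequence $E_i$ becomes the well-behaved $a<b$ branch for the dualized sequence $E_i^\ast$, which is precisely why the statement has to be phrased as a disjunction involving both $M$ and $M^\ast$.
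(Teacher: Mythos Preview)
Your proof is correct and follows the same underlying strategy as the paper's---Theorem~\ref{corcx} combined with the duality Lemma~\ref{lem57}---but the organization differs in a way worth noting. The paper takes only \emph{one} step of a reducing $\pd$-sequence, obtaining $0\to M^{\oplus a}\to K\to\syz^n_R M^{\oplus b}\to 0$ with $\rpdim_R(K)\le 1$, and then invokes Proposition~\ref{p1}(i) (the already-proved $c=1$ case) to get $\cx_R(K)\le 1$ directly. In the dual branch $a>b$, the paper uses Proposition~\ref{57} to transfer $\rpdim_R(K)\le 1$ to $\rpdim_R(K^\ast)\le 1$ and applies Proposition~\ref{p1} again.

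You instead unroll both steps of the reducing sequence and carry the dual sequences $E_i^\ast$ along from the start, which lets you bound $\cx_R(K_1)$ and $\cx_R(K_1^\ast)$ simultaneously by a direct case split on $a_2$ versus $b_2$. This is slightly more hands-on but has the virtue of bypassing Proposition~\ref{57} entirely: you never need to know that $\rpdim_R(K^\ast)=\rpdim_R(K)$ in general, only that the specific dualized sequences $E_i^\ast$ exist, which is just Lemma~\ref{lem57}. The paper's route is more modular and scales better if one were to push to higher $c$, while yours is more self-contained for $c=2$.
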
  

\begin{proof} By assumption, we have an exact sequence \begin{equation}
0 \to M^{\oplus a} \to K \to \Omega^n_R M^{\oplus b} \to 0,
\end{equation}
where $a, b \ge 1$, $n \ge 0$, and $\rpdim_R(K) \le 1$. Since $K$ is totally reflexive, we get $\cx_R(K)\le 1$ by Proposition \ref{p1}. If $a\le b$, then $\cx_R(M)\le \cx_R(K)+1\le 2$ by Theorem \ref{corcx}(i)(ii). Next we assume $a>b$. By Lemma \ref{lem57} and Proposition \ref{57} we have an exact sequence \[
0 \to (M^*)^{\oplus b} \to \syz^n_R(K^*)\oplus F \to \syz^n_R (M^*)^{\oplus a} \to 0,
\] where $\rpdim_R K^*=\rpdim_R K\le1$. Then \cite[Proposition 3.8]{redring} yields $\rpdim_R (\syz^n_R(K^*)\oplus F) \le 1$. Since $\syz^n_R(K^*)\oplus F$ is totally reflexive, we obtain $\cx_R (\syz^n_R(K^*)\oplus F)\le 1$ by Proposition \ref{p1}. Thus $a>b$ now implies $\cx_R(M^*)\le \cx_R (\syz^n_R(K^*)\oplus F)\le 1$ by Theorem \ref{corcx}(iii). 
\end{proof}  



\begin{prop} \label{l4} Let $(R,\m,k)$ be a local Cohen--Macaulay ring of dimension $d$.
Assume that there exists a non-zero maximal Cohen--Macaulay module $C$ of finite injective dimension such that one of the following holds
\begin{enumerate}[\rm(1)]
    \item  $\cx_R\big(\Hom_R(\syz^d_Rk,C)\big)<\infty$
    
    \item $\cx_R(C)<\infty$ (for e.g., $R$ is Gorenstein) and $\cx_R\big(\Hom_R(\syz^n_Rk,C)\big)<\infty$ for some integer $n \ge d$.
\end{enumerate}

Then, $R$ is a complete intersection. Moreover, in case \rm{(1)}, it holds that $\codim R=\cx_R (\Hom_R(\syz^d_Rk,C))$, and similarly in case \rm{(2)}, it holds that $\codim R=\cx_R (\Hom_R(\syz^n_Rk,C))$. 
\end{prop}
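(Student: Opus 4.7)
The plan is to first reduce the hypothesis to the case $C=\omega$, where $\omega=\omega_R$ is the canonical module of $R$. Since $R$ is Cohen--Macaulay and admits a nonzero MCM module of finite injective dimension, Sharp's theorem gives that $\omega$ exists and that $C\cong \omega^{\oplus s}$ for some $s\ge 1$. Because the complexity is unaffected by passage to a finite direct sum, I may assume $C=\omega$ throughout.

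The technical backbone is the following exact sequence. Set $N=\syz^n_R k$ (with $n=d$ in case (1) and any $n\ge d$ in case (2)), so that $N$ is MCM. Applying $\Hom_R(-,\omega)$ to the truncated minimal free resolution
\[
0\to N\to F_{n-1}\to\cdots\to F_0\to k\to 0,
\]
and using that $\omega$ has depth and injective dimension both equal to $d$, one obtains $\Ext_R^i(k,\omega)=0$ for $i\ne d$ together with $\Ext_R^d(k,\omega)\cong k$ by local duality. Splicing the resulting cohomology data produces, for $d\ge 1$, an exact sequence
\[
(\star)\quad 0\to \omega^{b_0}\to \omega^{b_1}\to\cdots\to \omega^{b_{n-1}}\to \Hom_R(N,\omega)\to 0
\]
whose unique nontrivial cohomology is $k$, sitting in homological position $d$; here $b_i=\beta_i^R(k)$. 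The case $d=0$ is easy and handled separately.

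For case (2), I would break $(\star)$ into short exact sequences and invoke the subadditivity of complexity: from a short exact sequence $0\to A\to B\to C\to 0$ each of $\cx_R(A)$, $\cx_R(B)$, $\cx_R(C)$ is at most the maximum of the other two. Chaining these bounds from both ends and across the $k$-cohomology in position $d$ yields
\[
\cx_R(k)\le \max\bigl(\cx_R(\omega),\,\cx_R(\Hom_R(N,\omega))\bigr)<\infty.
\]
Then $R$ is a complete intersection by Gulliksen's theorem (or the complete intersection case of Theorem \ref{clas}), and $\codim R=\cx_R(k)$. The reverse inequality extracted from the same splitting, combined with the standard fact $\cx_R(\omega)\le \codim R$ over any complete intersection, then upgrades this to $\codim R=\cx_R(\Hom_R(N,\omega))$.

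The heart of the argument, and the main obstacle, will be case (1): here we no longer have $\cx_R(\omega)<\infty$ at our disposal and the naive subadditivity argument breaks down. The key principle I would invoke is that the canonical duality $(-)^{\dagger}=\Hom_R(-,\omega)$ is an involutive equivalence on the stable category of MCM modules and preserves complexity, i.e., $\cx_R(M)=\cx_R(M^{\dagger})$ for every MCM module $M$ with either side finite. Applied to $M=\syz^d_R k$ this gives $\cx_R(\syz^d_R k)<\infty$, hence $\cx_R(k)<\infty$, and the conclusion follows as in case (2) with $\codim R=\cx_R(k)=\cx_R(\Hom_R(\syz^d_R k,\omega))$. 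To establish this complexity-preserving property outside the Gorenstein setting, I would apply $\RHom_R(-,\omega)$ to the sequence $(\star)$, together with its analog obtained by replacing $k$ by an arbitrary MCM module, and exploit the MCM reflexivity $(M^{\dagger})^{\dagger}\cong M$ to trap $\cx_R(M)$ and $\cx_R(M^{\dagger})$ between identical polynomial bounds.
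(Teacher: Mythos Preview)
Your reduction to $C=\omega$ via Sharp's theorem and the argument for case (2) are sound; the paper argues (2) by dualizing only the segment of the resolution between $\syz^n_R k$ and $\syz^d_R k$ and then invoking (1), but the underlying mechanism is the same subadditivity of complexity along an exact sequence.

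Case (1), however, has a genuine gap. The assertion that $\cx_R(M)=\cx_R(M^{\dagger})$ for maximal Cohen--Macaulay $M$ whenever one side is finite is false over any non-Gorenstein Cohen--Macaulay ring: take $M=R$, so $\cx_R(R)=0$ while $M^{\dagger}=\omega$ has $\cx_R(\omega)=\infty$ (finite projective dimension of $\omega$ would make $\omega$ free, hence $R$ Gorenstein). Thus no argument along the lines you sketch---applying $\RHom_R(-,\omega)$ to $(\star)$ and invoking reflexivity---can establish this claim. Concretely, for a general MCM module $M$ the dualized resolution only yields an exact sequence relating $M^{\dagger}$, copies of $\omega$, and $(\syz^n_R M)^{\dagger}$, and subadditivity then bounds $\cx_R(M^{\dagger})$ in terms of $\cx_R(\omega)$ and $\cx_R((\syz^n_R M)^{\dagger})$; there is no a priori control on $\cx_R(\omega)$. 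Even over Gorenstein rings, $\cx_R(M)$ and $\cx_R(M^*)$ measure growth at opposite ends of a complete resolution and need not agree outside the complete intersection case, so the principle you invoke essentially presupposes the conclusion.

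The paper handles (1) by induction on $d=\dim R$. The base case $d=0$ is immediate: $\Hom_R(k,C)$ is a nonzero $k$-vector space, so the hypothesis is literally $\cx_R(k)<\infty$. For the inductive step one picks a regular element $x\in\fm\setminus\fm^2$, uses the splitting $(\syz^d_R k)/x(\syz^d_R k)\cong \syz^d_{R/(x)}k\oplus \syz^{d-1}_{R/(x)}k$ together with the base-change isomorphism $\Hom_R(\syz^d_R k,C)\otimes_R R/(x)\cong \Hom_{R/(x)}\big((\syz^d_R k)/x(\syz^d_R k),\,C/xC\big)$, and reads off that $\cx_{R/(x)}\big(\Hom_{R/(x)}(\syz^{d-1}_{R/(x)}k,\,C/xC)\big)<\infty$, which is precisely the hypothesis one dimension down. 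Once $R$ is known to be a complete intersection, the equality $\codim R=\cx_R(\Hom_R(\syz^d_R k,C))$ is obtained from the Avramov--Buchweitz formula $\cx_R(\Hom_R(\syz^d_R k,C))=\cx_R(\syz^d_R k)+\px_R(C)$, not from your sequence $(\star)$.
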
   

\begin{proof} (1) We first prove that $R$ is a complete intersection by induction on $d=\dim R$. If $d=0$, then $\Hom_R(k,C)$ is a non-zero $k$-vector space. Therefore the hypothesis implies $\cx_R (k)<\infty$, and which implies $R$ is a complete intersection. Now assume $d>0$. Take an $R$-regular (and hence $C$-regular) element  $x\in\fm\setminus \fm^2$.
Then $(\syz^d_R k)/x(\syz^d_R k) \cong \syz^d_{R/(x)} k \oplus \syz^{d-1}_{R/(x)} k$ by \cite[Corollary 5.3]{summ}. Since $\Ext^{>0}_R(\syz^d_Rk,C)=0$, we have 
\begin{align*}
\Hom_R(\syz^d_R k, C)\otimes_R R/(x) &\cong \Hom_{R/xR}\left((\syz^d_R k)/x(\syz^d_R k), C/xC\right) \\
&\cong \Hom_{R/xR}\left(\syz^d_{R/(x)} k, C/xC\right)\oplus \Hom_{R/xR}\left(\syz^{d-1}_{R/(x)} k, C/xC\right)  	
\end{align*}
by \cite[3.3.3(a)]{bh}.
Since $x$ is $C$-regular, it is also $\Hom_R(\syz^d_R k,C)$-regular. From this, we have:

\begin{align*}\cx_{R/xR}\left(\Hom_{R/xR}\left(\syz^d_{R/(x)} k, C/xC\right)\oplus \Hom_{R/xR}\left(\syz^{d-1}_{R/(x)} k, C/xC\right) \right)&=\cx_{R/xR}\left(\Hom_R(\syz^d_R k,C)\otimes_R R/(x)\right)\\&=\cx_R\left(\Hom_R(\syz^d_R k,C)\right)<\infty. 
\end{align*}   
Hence by \cite[4.2.4(3)]{av}, we get 
$$
\max \left\{\cx_{R/xR}\left(\Hom_{R/xR}\left(\syz^d_{R/(x)} k, C/xC\right)\right), \cx_{R/xR}\left(\Hom_{R/xR}\left(\syz^{d-1}_{R/(x)} k, C/xC\right)\right)\right\}<\infty.
$$ 
In particular, $\cx_{R/xR} \left(\Hom_{R/xR}\left(\syz^{d-1}_{R/(x)} k, C/xC\right)\right)<\infty$. Since $C/xC$ is maximal Cohen--Macaulay $R/(x)$-module of finite injective dimension, the induction hypothesis yields that $R/xR$ is a complete intersection and hence so is $R$.

Now since we have proved $R$ is a complete intersection, and since $\Ext^{>0}_R(\syz^d_R k, C)=0$, we get by \cite[Thm. II(1) and 1.5(3)]{ABu} that $\cx_R(\Hom_R(\syz^d_R k, C))=\text{px}_R (\Hom_R(\syz^d_R k, C))=\cx_R(\syz^d_R k)+\text{px}_R (C)\ge \cx_R(k)$. Thus, $\cx_R(\Hom_R(\syz^d_R k, C))=\cx_R(k)=\codim R$.  

(2) The $n=d$ case is already covered by part (1). Now assume there exists an integer $n>d$ such that $\cx_R (\Hom_R(\syz^n_Rk,C))<\infty$. Applying $\Hom_R(-,C)$ to the exact sequence 
$$
0\to \syz^n_R k\to R^{\oplus b_{n-1}} \to \cdots \to R^{\oplus b_d}\to \syz^d_R k \to 0
$$ 
and remembering $\Ext^{>0}_R(\syz^d_Rk,C)=0$, we get an exact sequence 
$$
0\to \Hom_R(\syz^d_R k, C)\to C^{\oplus b_d}\to \cdots \to C^{\oplus b_{n-1}}\to \Hom_R(\syz^n_R k, C)\to 0.
$$
Since $\cx_R(C)<\infty$ and $\cx_R (\Hom_R(\syz^n_Rk,C))<\infty$ by hypothesis, we get $\cx_R (\Hom_R(\syz^d_Rk,C))<\infty$. Therefore $R$ is a complete intersection by part (1).  Finally, by replacing $d$ by $n$ in the last part of the argument for (1), it similarly follows that  $\cx_R(\Hom_R(\syz^n_R k, C))=\cx_R(k)=\codim R.$
\end{proof}  

\begin{cor}\label{trnew} Let $(R,\m,k)$ be a local ring. If there exists an integer $n\ge \depth R$ such that  $\rpdim_R(\Tr_R \syz^n_R k)\le 2$, then $R$ is a complete intersection of codimension at most two.  
\end{cor}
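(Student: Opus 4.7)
The plan is to reduce the statement to a complexity dichotomy on $\syz^n_R k$ (or its $R$-dual) and then invoke Propositions \ref{c1} and \ref{l4}. First I would observe that since $\rpdim_R(\Tr_R \syz^n_R k)\leq 2<\infty$, the chain of inequalities in \ref{rdim} gives $\rGdim_R(\Tr_R \syz^n_R k)<\infty$, so the implication $\text{(viii)}\Rightarrow\text{(i)}$ of Theorem \ref{Gdimcor} forces $R$ to be Gorenstein. In particular $\depth R=\dim R$, so the hypothesis $n\geq \depth R$ ensures that $\syz^n_R k$, and hence also its $R$-dual $(\syz^n_R k)^*$, is totally reflexive.

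The next step is to transfer the reducing projective dimension bound from $\Tr_R \syz^n_R k$ to $(\syz^n_R k)^*$. Using the standard stable isomorphism $\syz^2_R\Tr_R \syz^n_R k \oplus F \cong (\syz^n_R k)^*$ (already exploited in the proof of Proposition \ref{p1}(iv)), together with \cite[Corollary 3.13]{redring} (for the syzygy) and \cite[Proposition 3.8]{redring} (to discard the free summand $F$), I would conclude that $\rpdim_R((\syz^n_R k)^*)\leq 2$. At this point Proposition \ref{c1} applies to the totally reflexive module $(\syz^n_R k)^*$, yielding the dichotomy
\[
\cx_R((\syz^n_R k)^*)\leq 2 \quad\text{or}\quad \cx_R(\syz^n_R k)=\cx_R((\syz^n_R k)^{**})\leq 2,
\]
where in the second alternative I am using that $(\syz^n_R k)^{**}\cong \syz^n_R k$ for a totally reflexive module.

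Each branch of the dichotomy then yields the conclusion. If $\cx_R(\syz^n_R k)\leq 2$, then $\cx_R(k)\leq 2$ since syzygy preserves complexity, and Gulliksen's theorem gives that $R$ is a complete intersection of codimension at most two. If instead $\cx_R((\syz^n_R k)^*)\leq 2$, I would apply Proposition \ref{l4}(2) with $C=R$: since $R$ is Gorenstein, $C=R$ is a nonzero maximal Cohen--Macaulay module of finite injective dimension with $\cx_R(C)=0<\infty$, and the hypothesis gives $\cx_R(\Hom_R(\syz^n_R k,R))=\cx_R((\syz^n_R k)^*)<\infty$ with $n\geq \dim R$; Proposition \ref{l4}(2) then forces $R$ to be a complete intersection with $\codim R=\cx_R((\syz^n_R k)^*)\leq 2$. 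Thus in either case $R$ is a complete intersection of codimension at most two. The two genuine obstacles, namely the Gorenstein reduction and the complexity dichotomy for totally reflexive modules of $\rpdim\leq 2$, have already been surmounted in Theorem \ref{Gdimcor} and Proposition \ref{c1}, respectively, so the remaining work is essentially assembly.
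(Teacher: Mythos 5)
Your proof is correct and follows essentially the same route as the paper: reduce to Gorensteinness via Theorem \ref{Gdimcor}(viii), transfer the $\rpdim$ bound through the stable isomorphism $\syz^2_R\Tr_R\syz^n_R k \oplus F \cong (\syz^n_R k)^*$ using \cite[Proposition 3.8 and Corollary 3.13]{redring}, invoke Proposition \ref{c1} for the complexity dichotomy, and then conclude via Proposition \ref{l4}. The only (very slight) difference is that you make explicit the two branches of the dichotomy, using Gulliksen's theorem directly when $\cx_R(\syz^n_R k)\le 2$ and Proposition \ref{l4}(2) with $C=R$ when $\cx_R((\syz^n_R k)^*)\le 2$, whereas the paper compresses both cases into a single appeal to Proposition \ref{l4}.
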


\begin{proof} The hypothesis along with Theorem \ref{Gdimcor} implies that $R$ is Gorenstein. Let $d=\dim R$. There exists a free module $F$ such that $\syz^2\Tr_R \syz^n_R k \oplus F \cong  (\syz^{n}_R k)^*$. By \cite[Proposition 3.8 and Corollary 3.13]{redring}, we have $\rpdim_R((\syz^{n}_R k)^*) = \rpdim_R(\syz^2\Tr_R \syz^n_R k) = \rpdim_R(\Tr_R \syz^n_R k)\le 2$. Since $R$ is Gorenstein and $n\ge \dim R$, $(\syz^{n}_R k)^*$ is  totally reflexive, and thus $\cx_R((\syz^n_R k)^*)\le 2$ or $\cx_R(\syz^n_R k)\le 2$ holds by Proposition \ref{c1}. Applying Proposition \ref{l4} (with $R=C$), we conclude that $R$ is a complete intersection of codimension less than or equal to $2$.
\end{proof}

\begin{cor} \label{t1} Let $(R, \fm, k)$ be a Gorenstein local ring of dimension $d$. Then, the following are equivalent:

\begin{enumerate}[\rm(1)]
    \item $R$ is a complete intersection of codimension at most two.
    
    \item For every integer $n\ge 1$, $\rpdim_R(\syz^n_Rk)\le 2$.  
    
    \item There exists an integer $n\ge 1$ such that $\rpdim_R(\syz^n_Rk)\le 2$. 
\end{enumerate}   
\end{cor}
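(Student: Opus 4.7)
The plan is to prove the cycle (1) $\Rightarrow$ (2) $\Rightarrow$ (3) $\Rightarrow$ (1).

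For (1) $\Rightarrow$ (2): over a complete intersection of codimension at most $2$, every finitely generated $R$-module has finite complete intersection dimension and hence reducible complexity by \cite[Proposition 2.2]{bergh}. Corollary \ref{corcxrpd}(ii) then gives $\rpdim_R(M)=\cx_R(M)\le \codim R\le 2$ for every $R$-module $M$, in particular for each $\syz^n_R k$. The implication (2) $\Rightarrow$ (3) is trivial.

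The content of the corollary is the implication (3) $\Rightarrow$ (1). Suppose $\rpdim_R(\syz^n_R k)\le 2$ for some $n\ge 1$. Iterating \cite[Corollary 3.13]{redring} yields $\rpdim_R(\syz^m_R k)\le 2$ for every $m\ge n$. Choosing $m\ge \max(n,d)$ ensures, since $R$ is Gorenstein, that $\syz^m_R k$ is totally reflexive while retaining $\rpdim_R(\syz^m_R k)\le 2$. Applying Proposition \ref{c1} to this totally reflexive module then produces the dichotomy: either $\cx_R(\syz^m_R k)\le 2$, or $\cx_R((\syz^m_R k)^*)\le 2$.

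In the second alternative, Proposition \ref{l4}(2) applied with $C=R$ (a nonzero maximal Cohen--Macaulay module of finite injective dimension, since $R$ is Gorenstein) concludes that $R$ is a complete intersection with $\codim R = \cx_R((\syz^m_R k)^*) \le 2$. In the first alternative, $\cx_R(k)=\cx_R(\syz^m_R k)\le 2<\infty$, and then Gulliksen's theorem---the same input exploited in the $d=0$ base case within the proof of Proposition \ref{l4}(1)---yields that $R$ is a complete intersection of codimension $\cx_R(k)\le 2$. The main obstacle is organizing the reduction from arbitrary $n\ge 1$ to a syzygy index past $d$ without losing the reducing projective dimension bound; after that the result slots neatly into the Proposition \ref{c1} / Proposition \ref{l4} template already developed for Corollary \ref{trnew}.
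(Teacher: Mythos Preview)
Your proof is correct and follows essentially the same route as the paper: reduce to a syzygy index at least $d$ via \cite[Corollary 3.13]{redring}, apply Proposition~\ref{c1} to the resulting totally reflexive module, and finish with Proposition~\ref{l4} (taking $C=R$). Your explicit split into the two alternatives from Proposition~\ref{c1}---handling $\cx_R(\syz^m_R k)\le 2$ directly via Gulliksen rather than forcing it through Proposition~\ref{l4}---is a mild clarification of the paper's terser ``Proposition~\ref{l4} implies'', but the argument is the same.
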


\begin{proof} $(1)\implies (2)$ Follows from Corollary \ref{corcxrpd}. $(2)\implies (3)$ is obvious. 
$(3)\implies (1)$ Using \cite[Corollary 3.13]{redring} we can pass to a higher syzygy to assume that there exists an integer $n\ge d$ such that $\rpdim_R(\syz^n_Rk)\le 2$. Then $\syz^n_R k$ is totally reflexive, and therefore proposition \ref{c1} shows that either $\cx (\syz^n_R k)\le 2$ or $\cx ((\syz^n_R k)^*) \le 2$.
Then Proposition \ref{l4} implies that $R$ is a complete intersection of codimension less than or equal to $2$.    
\if0
If $R$ is a complete intersection of codimension at most $2$, then $\rpdim_R(k) \le 2$ by Corollary \ref{corcxrpd}. Conversely, assume $R$ is Gorenstein, $\dim R=d$, and $\rpdim_R(k) \le 2$.By \cite[Corollary 3.13]{redring} we then have $\rpdim_R(\syz^d_R k)\le 2$.
\fi 
\end{proof} 


We are now ready to give a proof of $c=2$ case of Theorem \ref{mainci}:     

\begin{proof} [Proof of $c=2$ case of Theorem \ref{mainci}]  Note that (ii) $\Longrightarrow$ (iii) follows from Theorem \ref{Gdimcor} and (iii)  $\Longrightarrow$ (i) is due to Corollary \ref{t1}. Since (i) $\Longrightarrow$ (ii) is clear from Corollary \ref{corcxrpd},  this proves $\rm{(i)}\iff \rm{(ii)}\iff \rm{(iii)}$. Finally, (iv)$\iff$(i) follows by Corollaries \ref{corcxrpd} and  \ref{trnew}.
\end{proof}

Now we finally give a proof of Theorem \ref{corcx}. We start with an argument on sequences of positive real numbers.

\begin{chunk} 
Let $c \ge 0$ be an integer. We say that a sequence $\{a_n\}_{n=0}^\infty$ of real numbers has {\it polynomial growth of degree $c-1$} if it is eventually non-negative and there is a real number $A >0$ such that $a_n \le A n^{c-1}$ for all $n \gg 0$. Note that if $\{a_n\}_{n=0}$ has polynomial growth of degree $c-1$, then $\underset{n\to \infty}{\lim}\, a_n/n^{c} = 0$.
For example, for an $R$-module $M$, $\cx(M) \le c$ if and only if the Betti sequence $\{\beta_{n}^R(M)\}_{n=0}^\infty$ has polynomial growth of degree $c-1$. The following lemma is elementary.
\end{chunk}

\begin{lem}\label{poly} Let $c\geq 0$ be an integer, $f(t)\in \bR[t]$ be a polynomial of degree $c-1$ with a positive leading term and let $u$ be a real number such that $0 < u \le 1$.
We consider the sequence $\{a_n\}_{n=0}^\infty$ given by $a_n = \sum_{l=0}^n u^{n-l} f(l)$.
Then $\{a_n\}_{n=0}^\infty$ has polynomial growth of degree $c$. Furthermore, if $u < 1$, then $\{a_n\}_{n=0}^\infty$ has polynomial growth of degree $c-1$. 
\end{lem}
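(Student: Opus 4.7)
My plan is to separate the two cases $u=1$ and $0<u<1$, exploiting the recursion $a_n = u\cdot a_{n-1} + f(n)$ with $a_0 = f(0)$ that follows immediately from the definition of $a_n$. Let $A_0>0$ denote the leading coefficient of $f$.

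In the case $u=1$, the recursion telescopes to $a_n = \sum_{l=0}^n f(l)$. I will invoke the standard fact (Faulhaber, or induction on the degree) that for any polynomial $f$ of degree $c-1$ with leading coefficient $A_0$, the summatory function $\sum_{l=0}^n f(l)$ is itself a polynomial in $n$ of degree $c$ with positive leading coefficient $A_0/c$. This yields both that $a_n\ge 0$ for $n\gg 0$ and that $a_n\le An^c$ for some constant $A$ and large $n$, establishing polynomial growth of degree $c$.

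In the case $0<u<1$, I will establish the stronger conclusion of polynomial growth of degree $c-1$. For the upper bound, choose $C>0$ with $|f(l)|\le C(1+l)^{c-1}$ for all $l\ge 0$; reindexing via $k=n-l$ gives
\[
|a_n|\;\le\; C\sum_{k=0}^n u^k (1+n-k)^{c-1}\;\le\; C(1+n)^{c-1}\sum_{k=0}^\infty u^k \;=\;\frac{C(1+n)^{c-1}}{1-u},
\]
where I use $(1+n-k)^{c-1}\le(1+n)^{c-1}$ (valid since $c-1\ge 0$). For eventual non-negativity, pick $L$ with $f(l)\ge\tfrac{A_0}{2}l^{c-1}$ for $l\ge L$, and split $a_n$ into the pieces $l<L$ and $l\ge L$. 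The first piece is bounded in absolute value by a constant times $u^{n-L+1}$ and hence decays exponentially, while the second piece is bounded below already by its $l=n$ term, namely $f(n)\ge\tfrac{A_0}{2}n^{c-1}$. Combining, $a_n\ge\tfrac{A_0}{4}n^{c-1}$ for $n$ large.

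I do not expect any serious obstacle; this is an elementary real-analysis estimate. The only point to watch is the degree arithmetic: a polynomial with positive leading coefficient forces $c-1\ge 0$, so $c\ge 1$, which is exactly what is needed to apply $(1+n-k)^{c-1}\le(1+n)^{c-1}$. The case $c=0$ does not occur under the hypotheses (the zero polynomial has no positive leading coefficient), so nothing extra needs to be said.
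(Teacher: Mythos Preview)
Your proof is correct. For the case $u<1$, you take a genuinely different route from the paper. The paper argues by induction on $c$: it rewrites $(1-u)a_n = -u^{n+1}f(0) - \sum_{l=0}^{n-1} u^{n-l}\big(f(l+1)-f(l)\big) + f(n)$ and applies the induction hypothesis to the middle sum, since $f(t+1)-f(t)$ is a polynomial of degree $c-2$ with positive leading term. Your approach avoids induction entirely, bounding $|a_n|$ directly via the crude monotonicity $(1+n-k)^{c-1}\le(1+n)^{c-1}$ and the geometric series, and handling eventual non-negativity by splitting off the finitely many terms where $f$ may be negative. Your argument is shorter and more elementary; the paper's argument has the incidental advantage of pinning down the asymptotic $a_n \sim f(n)/(1-u)$, but this precision is not needed for the statement as given.
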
 

\begin{proof}
The case of $u=1$ is easy. We shall assume $u< 1$ and proceed by induction on $c$ to show $\{a_n\}_{n=0}^\infty$ has polynomial growth of degree $c-1$. If $c=0$, there is nothing to prove. Consider the case of $c \ge 1$. Then 
$$
a_n - ua_n = \sum_{l=0}^n u^{n-l} f(l) - u\sum_{l=0}^n u^{n-l} f(l) = - u^{n+1} f(0) - \sum_{l=0}^{n-1} u^{n-i}(f(l+1) -f (l)) + f(n)
$$
and hence
\begin{equation} \tag{\ref{poly}.1}
a_n = \dfrac{1}{1-u} \left\{ - u^{n+1} f(0) - \sum_{l=0}^{n-1} u^{n-i}(f(l+1) -f (l)) + f(n)\right\}.  
\end{equation}
Because $f(t+1)- f(t)$ is a polynomial with a positive reading term, the induction hypothesis shows that $\{\sum_{l=0}^{n-1} u^{n-i}(f(l+1) -f (l))\}_{n=0}^\infty$ has polynomial growth of degree $c-2$.
Thus, (\ref{poly}.1) shows $a_n \le \dfrac{1}{1-u}(f(n) + 1)$ for all $n \gg 0$. Moreover, $\underset{n\to \infty}{\lim}\, a_n/n^{c-1} = \underset{n\to \infty}{\lim}\, f(n)/(1-u)n^{c-1} > 0$ yields that $\{a_n\}_{n=0}^\infty$ is eventually non-negative.
Therefore, we are done.
\end{proof}

Now, we are ready to prove Theorem \ref{corcx}.

\begin{proof}[Proof of Theorem \ref{corcx}] Since $\beta_i(K)\le a\beta_{i+n}(M)+b\beta_i(M)$, $\cx_R(K)\le \cx_R(M)$. Take a real number $A>0$ such that $\beta_i(K) \le A i^{c-1}$ for all $i\ge 1$.
Set $u:=a/b$. 

First, consider the case of $a \le b$ i.e., $u \le 1$.
From the short exact sequence, there are inequalities
$$
b \beta_{i+n+1}(M) \le \beta_{i+1}(K) + a\beta_{i}(M) \le A (i+1)^{c-1} + a\beta_{i}(M)
$$
and hence we  get 
\begin{equation}\tag{\ref{corcx}.1}\label{eq1}
\beta_{i+n+1}(M) \le \frac{A}{b} (i+1)^{c-1} + u\beta_{i}(M)
\end{equation}
for all $i \ge 1$.

\begin{claim}\label{claim1}
For integers $r,i\ge 1$, we have an inequality
$$
\beta_{i+r(n +1)}(M) \le \frac{A}{b}\sum_{l=0}^{r-1}u^{r-l-1} (i+l(n+1)+1)^{c-1} + u^{r} \beta_{i}(M).
$$	
\end{claim}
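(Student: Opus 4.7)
The plan is to prove the claim by induction on $r$, using the single-step inequality \eqref{eq1} as the engine. The base case $r=1$ is immediate from \eqref{eq1}: the sum reduces to the single term with $l=0$, giving $\frac{A}{b}(i+1)^{c-1} + u\beta_i(M)$, which matches the statement exactly.

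For the inductive step, I would assume the claim holds for some $r\ge 1$ and every $i\ge 1$, and then derive the bound for $r+1$. The key idea is to apply the induction hypothesis after shifting the index $i$ to $i+n+1$: writing $i+(r+1)(n+1) = (i+n+1) + r(n+1)$, the induction hypothesis gives
$$
\beta_{i+(r+1)(n+1)}(M) \le \frac{A}{b}\sum_{l=0}^{r-1}u^{r-l-1}\bigl(i+n+1+l(n+1)+1\bigr)^{c-1} + u^{r}\beta_{i+n+1}(M).
$$
Then I would estimate $u^r\beta_{i+n+1}(M)$ by multiplying \eqref{eq1} by $u^r$, obtaining $u^r\beta_{i+n+1}(M) \le \frac{A}{b}u^r(i+1)^{c-1} + u^{r+1}\beta_i(M)$.

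To finish, I would reindex the sum by substituting $l' = l+1$ (so $l'$ runs from $1$ to $r$), rewriting $i+n+1+l(n+1)+1 = i+l'(n+1)+1$ and $u^{r-l-1} = u^{(r+1)-l'-1}$. The leftover term $\frac{A}{b}u^r(i+1)^{c-1}$ is precisely the $l'=0$ contribution in the sum $\sum_{l'=0}^{r}u^{(r+1)-l'-1}(i+l'(n+1)+1)^{c-1}$, so combining yields
$$
\beta_{i+(r+1)(n+1)}(M) \le \frac{A}{b}\sum_{l'=0}^{r}u^{(r+1)-l'-1}\bigl(i+l'(n+1)+1\bigr)^{c-1} + u^{r+1}\beta_i(M),
$$
which is the desired bound for $r+1$. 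There is no real obstacle here; the only thing that requires care is the reindexing so that the isolated term fits cleanly into the enlarged sum. This lemma is a purely mechanical consequence of \eqref{eq1}; its interest comes later, when it is combined with Lemma \ref{poly} to control the growth of the Betti numbers of $M$ and thereby to compare $\cx_R(M)$ with $\cx_R(K)$.
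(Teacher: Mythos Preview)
Your proof is correct and follows essentially the same induction on $r$ as the paper, the only cosmetic difference being that you apply \eqref{eq1} at the bottom (to pass from $\beta_{i+n+1}(M)$ to $\beta_i(M)$) whereas the paper applies it at the top (to pass from $\beta_{i+r(n+1)}(M)$ to $\beta_{i+(r-1)(n+1)}(M)$) before invoking the induction hypothesis. The reindexing is handled correctly.
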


\begin{proof}[Proof of Claim $1$]
We use the induction on $r$.
The case of $r=1$ is nothing but the inequality (\ref{eq1}).
Assume $r \ge 2$ and the inequality
$$
\beta_{i+(r-1)(n +1)}(M) \le \frac{A}{b}\sum_{l=0}^{r-2}u^{r-l-2} (i+l(n+1)+1)^{c-1} + u^{r-1} \beta_{i}(M)
$$
holds true. Then we use the inequality (\ref{eq1}) again, we obtain
\begin{align*}
\beta_{i+r(n +1)}(M) &\le \frac{A}{b} (i+r(n+1) -n)^{c-1} + u\beta_{i+(r-1)(n+1)}(M) \\
&= \frac{A}{b} (i+(r-1)(n+1) +1)^{c-1} + u\beta_{i+(r-1)(n+1)}(M) \\
&\le \frac{A}{b} (i+(r-1)(n+1) +1)^{c-1} + u\left\{\frac{A}{b}\sum_{l=0}^{r-1}u^{r-l-2} (i+l(n+1)+1)^{c-1} + u^{r-1} \beta_{i}(M) \right\} \\
&= \frac{A}{b}\sum_{l=0}^{r-1}u^{r-l-1} (i+l(n+1)+1)^{c-1} + u^{r} \beta_{i}(M),
\end{align*}
where the second inequality uses the induction hypothesis.
\end{proof}

Set $B:= \max\{\beta_i(M) \mid i=1,2,\ldots,n+1\}$.
For an integer $m \ge 2(n+1)$, write $m=r(n+1) + i$ for some $i \in \{1,2,\ldots, n+1\}$ and $r \ge 1$. Then one has
$$
\beta_{m}(M) \le \frac{A}{b}\sum_{l=0}^{r-1}u^{r-l-1} (i+l(n+1)+1)^{c-1} + u^{r} \beta_{i}(M) \le \frac{A}{b}\sum_{l=0}^{r-1}u^{r-l-1} (i+l(n+1)+1)^{c-1} + B.
$$

Applying Lemma \ref{poly} to the first term, we conclude the statements $(1)$ and $(2)$.

Assume that $a > b$ i.e., $u > 1$.
From the short exact sequence, there are inequalities 
$$
a\beta_i(M) \le \beta_i(K) + b\beta_{i+n+1}(M) \le Ai^{c-1} + b\beta_{i+n+1}(M)
$$ 
and hence we get 
\begin{equation}\tag{\ref{corcx}.2}\label{eq2}
\beta_{i+n+1}(M) \ge u\beta_i(M) - \dfrac{A}{b}i^{c-1}
\end{equation}
for all $i \ge 1$.

\begin{claim}\label{claim2}
For any integers $r,i\ge 1$, we have an inequality
$$
\beta_{i+r(n+1)}(M)	\ge u^r \left\{\beta_i(M) - \frac{A}{b}i^{c-1} \sum_{l=0}^{r-1}u^{-l-1}\left(1+\dfrac{l(n+1)}{i}\right)^{c-1} \right\}.
$$
\end{claim}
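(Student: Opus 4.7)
The plan is to prove Claim 2 by induction on $r\ge 1$, in direct parallel to the proof of Claim 1, but using the reverse inequality (\ref{corcx}.2) that was just established from the short exact sequence in the case $a>b$ (so $u>1$).

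For the base case $r=1$, the desired inequality reads
$$
\beta_{i+(n+1)}(M)\ge u\left\{\beta_i(M)-\tfrac{A}{b}i^{c-1}\cdot u^{-1}\right\}=u\beta_i(M)-\tfrac{A}{b}i^{c-1},
$$
which is exactly (\ref{corcx}.2). For the inductive step, assume the inequality at $r-1$:
$$
\beta_{i+(r-1)(n+1)}(M)\ge u^{r-1}\left\{\beta_i(M)-\tfrac{A}{b}i^{c-1}\sum_{l=0}^{r-2}u^{-l-1}\left(1+\tfrac{l(n+1)}{i}\right)^{c-1}\right\}.
$$
Then I would apply (\ref{corcx}.2) with index $i+(r-1)(n+1)$ in place of $i$ to get
$$
\beta_{i+r(n+1)}(M)\ge u\,\beta_{i+(r-1)(n+1)}(M)-\tfrac{A}{b}\bigl(i+(r-1)(n+1)\bigr)^{c-1},
$$
substitute the inductive hypothesis, and simplify.

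The final bookkeeping is the only mildly delicate piece, but it telescopes cleanly: the new correction term satisfies
$$
\tfrac{A}{b}\bigl(i+(r-1)(n+1)\bigr)^{c-1}=\tfrac{A}{b}i^{c-1}\left(1+\tfrac{(r-1)(n+1)}{i}\right)^{c-1}=u^{r}\cdot\tfrac{A}{b}i^{c-1}\cdot u^{-r}\left(1+\tfrac{(r-1)(n+1)}{i}\right)^{c-1},
$$
which is precisely $u^{r}\cdot\tfrac{A}{b}i^{c-1}$ times the $l=r-1$ summand of the desired sum. Combining it with the $l=0,\ldots,r-2$ summands coming from the inductive hypothesis (which get a factor of $u\cdot u^{r-1}=u^{r}$ after multiplication by $u$) yields the full sum $\sum_{l=0}^{r-1}u^{-l-1}(1+l(n+1)/i)^{c-1}$, completing the induction.

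There is no real obstacle here; the proof is a direct mirror of the argument for Claim 1 (with inequalities reversed since $u>1$), and the only thing to watch is that the exponent of $u$ on the correction terms matches correctly when the $l=r-1$ term is peeled off. After Claim 2 is established, the plan for part (iii) of the theorem will be to argue that, provided $\cx_R(M)<\infty$, the bracketed factor $\beta_i(M)-\frac{A}{b}i^{c-1}\sum_{l\ge 0}u^{-l-1}(1+l(n+1)/i)^{c-1}$ is bounded below for $i$ fixed (since the geometric series in $u^{-1}$ converges because $u>1$), so that letting $r\to\infty$ either forces $\beta_i(M)\le\frac{A}{b}i^{c-1}\sum_{l\ge 0}u^{-l-1}(1+l(n+1)/i)^{c-1}$ for all $i$ (giving $\cx_R(M)\le c=\cx_R(K)$, hence equality via $\cx_R(K)\le\cx_R(M)$), or else yields the required exponential growth contradicting $\cx_R(M)<\infty$.
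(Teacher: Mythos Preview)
Your proof is correct and follows essentially the same approach as the paper's own proof: induction on $r$, with the base case being exactly (\ref{corcx}.2) and the inductive step obtained by applying (\ref{corcx}.2) at the shifted index $i+(r-1)(n+1)$ and then substituting the induction hypothesis. Your bookkeeping for absorbing the new correction term as the $l=r-1$ summand is in fact more explicit than what the paper writes, and your sketch of how Claim 2 is then used to finish part (iii) also matches the paper's conclusion.
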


\begin{proof}[Proof of Claim $2$]
As in the proof of Claim \ref{claim1}, we use the induction on $r$.
The inequality holds if $r = 1$ by (\ref{eq2}).
Consider the case of $r \ge 2$ and assume 
$$
\beta_{i+(r-1)(n+1)}(M)	\ge u^{r-1} \left\{\beta_i(M) - \frac{A}{b} i^{c-1}\sum_{l=0}^{r-2}u^{-l-1}\left(1+\dfrac{l(n+1)}{i}\right)^{c-1} \right\}.
$$
Then using (\ref{eq2}), we obtain
\begin{align*}
\beta_{i+r(n+1)}(M)	&\ge u \beta_{i+(r-1)(n+1)}(M) - \frac{A}{b} \left(i+(r-1)(n+1)\right)^{c-1} \\
&\ge u\left[ u^{r-1}\left\{\beta_i(M) - \frac{A}{b}i^{c-1} \sum_{l=0}^{r-2}u^{-l-1}\left(1+\dfrac{l(n+1)}{i}\right)^{c-1}\right\}\right] - \frac{A}{b} \left(i+(r-1)(n+1)\right)^{c-1}  \\
&\ge u^r \left\{\beta_i(M) - \frac{A}{b}i^{c-1} \sum_{l=0}^{r-1}u^{-l-1}\left(1+\dfrac{l(n+1)}{i}\right)^{c-1} \right\},
\end{align*}
where the second inequality follows from the induction hypothesis.
\end{proof}

We note that for any integers $r,i \ge 1$ the following inequalities hold:
$$
\sum_{l=0}^{r-1}u^{-l-1}\left(1+\dfrac{l(n+1)}{i}\right)^{c-1} \le \sum_{l=0}^{r-1}u^{-l-1}\left(1+l(n+1)\right)^{c-1} \le \sum_{l=0}^{\infty}u^{-l-1}\left(1+l(n+1)\right)^{c-1},
$$
where the last sum $C := \sum_{l=0}^{\infty}u^{-l-1}\left(1+l(n+1)\right)^{c-1}$ is finite as $u > 1$.
Then we get inequalities
\begin{align}\tag{\ref{corcx}.3}\label{eq3}
\begin{aligned}
\beta_{i+r(n+1)}(M)	&\ge u^r \left\{\beta_i(M) - \frac{A}{b} i^{c-1}\sum_{l=0}^{r-1}u^{-l-1}\left(1+\dfrac{l(n+1)}{i}\right)^{c-1} \right\} \\
& \ge u^{r}\left\{\beta_i(M) - \dfrac{AC}{b} i^{c-1}\right\}
\end{aligned}
\end{align}
by Claim \ref{claim2} for any integers $r, i \ge 1$.

Now, we assume $\cx_R(M)\ne \cx_R(K)$. As $c=\cx_R(K)\le \cx_R(M)$, so then $\cx_R(M) > c$ and we now prove $\cx_R(M) = \infty$.  
Then there is a sufficiently large integer $k$ such that  
$
\beta_{k}(M) > \frac{C}{b} k^{c-1} +1.
$
Combining this inequality with (\ref{eq3}), we conclude that there is an integer $k$ such that $\beta_{k+r(n+1)}(M) \ge u^r$ for any integer $r\ge 1$. This means that $\cx_R(M)=\infty$. 
\end{proof}
   
\begin{rem} Let $R$ be a local ring and let $M$ and $N$ be $R$-modules. If
$f_R(M, N) := \inf \{j \ge 0 \mid l_R(\Ext_R^i(M, N)) < \infty \mbox{ for all } i \ge j\} < \infty$, then one can define
$$\beta^i_R(M, N)= l_R(\Ext_R^i(M, N)) \text{ for all } i \ge f_R(M, N)$$ 
and
$$
\cx_R(M, N) = \inf \{c \ge 0 \mid \exists r> 0 \mbox{ s.t. } \beta^i_R(M, N) \le r i^{c-1}\,\, (\forall i \ge f_R(M, N))\} 
$$ 
Note, If $N = k$, then $f_R(M, k) = 0$, $\beta_R^i(M, k) = \beta^i_R(M)$ is the  Betti number of $M$, and $\cx_R(M, k) = \cx_R(M)$ is the  complexity of $M$.  

An argument verbatim to that of the proof of Theorem \ref{corcx} also establishes the following result:  

\label{redcx} Let 
$$
0 \to M^{\oplus a} \to K \to \Omega^n_R M^{\oplus b} \to 0  
$$
be a short exact sequence, $X$ an $R$-module such that $f:=f_R(M, X) < \infty$ and $\cx_R(K, X) =c < \infty$.
\begin{enumerate}[\rm(1)]
\item
If $a < b$, then $\cx_R(M, X) = \cx_R(K, X)$.
\item	
If $a = b$, then $\cx_R(M, X)\le \cx_R(K, X) +1$.
\item
If $a > b$, then either $\cx_R(M, X)= \cx_R(K, X)$ or $\cx_R(M, X)=\infty$. 
\end{enumerate}

Applying this to $X=R$ and $M$ locally free on punctured spectrum, one sees that $\cx_R(M,R)=\gcx_R(M)$ is exactly the Gorenstein complexity, as in the notation of \cite[Definition 4.1]{at}. In this case, the above inequalities give relations between Gorenstein complexities of modules that fit into a short exact sequences, and the Gorenstein complexity and $\rGdim$ version of Corollary \ref{corcxrpd} holds (see \cite[Proposition 4.3]{at}). 
\end{rem} 

\if0 
\new{remove these}

\com{
We conclude by giving a further characterization of complete-intersection rings, among deformation of Golod rings, by using Corollary \ref{corcxrpd}. First we record a quick observation about reducing projective dimension over such rings. 

\begin{prop}\label{1} Let $R=S/(x_1,...,x_n)$, where $(S, \fn)$ is a local Golod ring and $x_1,...,x_n \subseteq \fn$ is an $S$-regular sequence. Let $M$ be an $R$-module of finite complexity. Then $\cx_R M=\rpdim_R M\le n+1$. Moreover, if $S$ is not a hypersurface, then $\cx_RM=\rpdim_R M\le n$. 
\end{prop} 

\begin{proof} Since $\cx_R M<\infty$, so $\cx_S M<\infty$. 

First assume $S$ is not a hypersurface. Since $\cx_S M<\infty$. Then by \cite[Corollary 2.7]{av} we have $\pd_S M<\infty$ i.e. $\cx_S M=0$. Hence, $\cdim_R M<\infty$, and $\cx_R M\le \cx_S M+n=n$. That $\cx_RM=\rpdim_R M\le n$ holds, follow by Corollary \ref{corcxrpd}.  

 If now $S$ is a hypersurface, then $R$ is a complete intersection of co-dimension $n+1$, and then again the conclusion is immediate by Corollary \ref{corcxrpd}. 
\end{proof}

As a  consequence of Proposition \ref{1} and Corollary \ref{corcxrpd}, we deduce the following characterization.   

\begin{cor} Let $R=S/(x_1,...,x_n)$, where $(S, \fn)$ is a local Golod ring and $x_1,...,x_n \subseteq \fn$ is an $S$-regular sequence.  Then the following are equivalent: 
\begin{enumerate}[\rm(1)]
    \item There exists an $R$-module $M$ such that $\cx_R M<\infty$ and $\rpdim_R M\ge n+1$. 
    \item $R$ is a complete intersection of codimension $n+1$. 
    \item There exists an $R$-module $M$ such that $\cx_R M=\rpdim_R M=n+1$.  
\end{enumerate} 
\end{cor}

\begin{proof} (1)$\implies$ (2):  If (1) holds, then $S$ must be a hypersurface by Proposition \ref{1}. Hence, $R$ is a complete intersection of co-dimension $n+1$. 
$(2)\implies (3)$: One can take the residue field of $R$ and apply Corollary \ref{corcxrpd}.    
$(3)\implies (1)$: This is clear.
\end{proof}}   
\fi




\if0
\todo[inline]{Check and remove \ref{nzd} and \ref{nzdse}. Also 6.7 and 6.10 can be removed or moved}

\begin{lem}\label{nzd} Let $(R,\m)$ be a local ring, $x\in R$ be $R$-regular, and  $M$ an $R/(x)$-module.
Let $\Hdim\in \{\pd,\Gdim\}$.
Then $\rHdim_R(\syz_R M)\le \rHdim_{R/(x)}(M)$.
\end{lem}  

\begin{proof}
We prove the claim by induction on $r=\rHdim_{R/xR}(M)$.
If $r=0$, then $\Hdim_{R/xR}(M)<\infty$, and then $\Hdim_R(M)<\infty$ (as $x$ is $R$-regular), and so $\Hdim_R(\syz_R M)<\infty$.
So assume $r>0$.
We have an exact sequence $0\to M^{\oplus a}\to K\to \syz_{R/xR}^n M^{\oplus b} \to 0$ of $R/(x)$-modules, where $\rHdim_{R/xR}(K)\le r-1$, $a,b$ are positive integers, and $n\ge 0$ is an integer.
Applying $\syz_R$, we have $0\to \syz_R M^{\oplus a}\to \syz_R K\oplus R^{\oplus c}\to \syz_R\syz_{R/xR}^n M^{\oplus b} \to 0$, for some integer $c\ge 0$.
Since $x$ is $R$-regular, so $\syz_R\syz_{R/xR}^n M\cong \syz_R^{n+1}(M) \oplus R^{\oplus u}$ (\cite[Lemma 4.2.]{fiber19}) for some $u\ge 0$.
So we get $0\to \syz_R M^{\oplus a}\to \syz_R K\oplus R^{\oplus c}\to \syz_R^{n+1}(M)^{\oplus b} \oplus R^{\oplus ub} \to 0$.
So we get an exact sequence $0\to \syz_R M^{\oplus a}\to L \to \syz_R^{n+1}(M)^{\oplus b} \to 0$, where $L\oplus R^{\oplus ub}\cong \syz_R K\oplus R^{\oplus c}$.
Now, $\rHdim_{R/xR}(K)\le r-1$, so by induction hypothesis, $\rHdim_R(\syz_R K)\le \rHdim_{R/xR}(K)$.
So, $\rHdim_R(L\oplus R^{\oplus ub})=\rHdim_R(\syz_R K\oplus R^{\oplus c})\le \rHdim_{R/xR}(K)$.
Then by \ref{212} we get $\rHdim_R(L)\le \rHdim_{R/xR}(K)$.
So the exact sequence $0\to \syz_R M^{\oplus a}\to L \to \syz_R^{n}(\syz_R M)^{\oplus b} \to 0$ implies $\rHdim_R(\syz_R M)\le \rHdim_R(L)+1=\rHdim_{R/xR}(K)+1\le \rHdim_{R/xR}(M)+1$.
\if
Without assuming \ref{compare}(ii), we would only get $\rHdim_R(\syz_R^2 M)<\infty$. Indeed,   

Applying $\syz_R$ one more time to $0\to \syz_R M^{\oplus a}\to \syz_R K\oplus R^{\oplus c}\to \syz_R^{n+1}(M)^{\oplus b} \oplus R^{\oplus ub} \to 0$ and remembering $\syz_R (\syz_R^{n+1}(M)^{\oplus b} \oplus R^{\oplus ub})\cong \syz^{n+2}_R M^{\oplus b}$, we get $0\to \syz_R^2 M^{\oplus a}\to \syz_R^2 K\oplus R^{\oplus f}\to \syz_R^{n+2}(M)^{\oplus b}  \to 0$ for some $f\ge 0$. As $\rHdim_{R/xR}(K)\le r-1$, so by induction hypothesis, $\rHdim_R(\syz^2_R K)<\infty$, so $\rHdim_R(\syz^2_R K\oplus R^{\oplus f})<\infty$, so the exact sequence $0\to \syz_R^2 M^{\oplus a}\to \syz_R^2 K\oplus R^{\oplus f}\to \syz_R^{n}(\syz^2_RM)^{\oplus b} \to 0$ yields $\rHdim_R(\syz^2_R M)<\infty$.  This finishes the inductive step, hence the proof. 
\fi
\end{proof}

\begin{cor}\label{nzdse}
Let $x_1,\cdots,x_n$ be an $R$-regular sequence.
Let $M$ be an $R/(x_1,\cdots,x_n)R$-module.
Let $\Hdim\in \{\pd,\Gdim\}$.
\new{Then $\rHdim_R(\syz_R^{n} M) \le \rHdim_{R/(x_1,\cdots,x_n)R}(M)$.}
\old{If $\rHdim_{R/(x_1,\cdots,x_n)R}(M)$ is finite, then $\rHdim_R(\syz_R^{n} M)$ is finite.}
\end{cor}

\begin{proof}  We prove this by induction on $n$. The $n=1$ case is clear by Lemma\ref{nzd}. Let $n\ge 2$, and set $S:=R/(x_1,\cdots,x_{n-1})R$. Then $x_n$ is $S$-regular, and $R/(x_1,\cdots,x_n)R=S/x_nS$. \new{By Lemma \ref{nzd}, $\rHdim_S(\syz_S M) \le \rHdim_{S/x_nS}(M)$. Moreover, one has $\rHdim_R(\syz_R^{n-1}(\syz_SM)) \le \rHdim_{S}(\Omega_SM)$ by induction hypothesis. By \cite[Lemma 4.2]{fiber19} we have  $\syz^{n-1}_R(\syz_{S}M)\cong \syz^{n}_RM\oplus F$ for some free $R$-module $F$. Now we conclude the inequality from Lemma \ref{212}.}

\old{Since $\rHdim_{S/x_nS}(M)<\infty$, so by Lemma\ref{nzd}, $\rHdim_S(\syz_S M)<\infty$ i.e. $\rHdim_{R/(x_1,\cdots,x_{n-1})R}\left(\syz_{R/(x_1,\cdots,x_{n-1})R}M\right)<\infty$. By induction hypothesis, $\rHdim_R\left(\syz^{n-1}_R\syz_{R/(x_1,\cdots,x_{n-1})R}M \right)<\infty$.  By \cite[Lemma 4.2]{fiber19} we have  $\syz^{n-1}_R\syz_{R/(x_1,\cdots,x_{n-1})R}M\cong \syz^{n}_RM\oplus F$ for some free $R$-module $F$. Hence, $\rHdim_R(\syz^{n}_RM\oplus F)<\infty$. Then by  \ref{compare}(ii), $\rHdim_R(\syz_R^{n} M)$ is finite.} 
\end{proof} 

\begin{chunk}\label{hsyz} Let $N$ be a module and $\Hdim\in \{\pd,\Gdim\}$. Following a similar argument  as in \cite[5.3(i)]{CA}, it can  be shown (to modify the proof to make it work for $n_i=0$ too, set $F_i=G_i\oplus F_{i-1}^{\oplus 2}$ when $n_i=0$ for $i\ge 1$) that $\rHdim_R (\syz_R N) \le \rHdim_R (N)$.  
\end{chunk}  

\begin{lem} \label{lemgood} Let $R$ be a local ring and let $M$ be an $R$-module. Assume there is a non zero-divisor $x\in \fm$ on $R$ such that $xM=0$. Let $\Hdim\in \{\pd,\Gdim\}$. Then it follows that $\rHdim_{R/xR}(M)\leq \rHdim_R(\Omega_R M)+1\le \rHdim_R(M)+1$.
\end{lem}

\begin{proof}  First we remark that by \cite[Proposition 2.9.]{acck} and its proof, it holds that if $x\in \m$ is regular on $R$ and on a module $N$, then $\rHdim_{R/xR} (N/xN) \le \rHdim_R (N)$. So in particular, if $x\in \m$ is $R$-regular, then for any module $M$, we have $\rHdim_{R/xR} \left(\dfrac{\syz_R M}{x\cdot \syz_RM} \right) \le \rHdim_R \left(\syz_R M\right)$. 

Now, assume there is a non zero-divisor $x\in \fm$ on $R$ such that $xM=0$. Then, by tensoring the syzygy sequence of $M$ with $R/xR$, we obtain an exact sequence of $R$-modules of the form
\begin{equation}\tag{\ref{lemgood}.1}
0 \to \Tor_1^R(M,R/xR) \to \Omega_R M \otimes_R (R/xR) \to F\otimes_R(R/xR) \to M/xM \to 0,
\end{equation}
where $F$ is free. Note that $\Tor_1^R(M,R/xR) \cong M/xM=M$ since $xM=0$. Hence (\ref{lemgood}.1) yields the following exact sequence of $R/xR$-modules:
\begin{equation}\tag{\ref{lemgood}.2}
0 \to M \to \Omega_R M \otimes_R (R/xR) \to \Omega_{R/xR}(M) \to 0.
\end{equation}  
 
So we get 

$$\rHdim_{R/xR}(M)\leq \rHdim_{R/xR} \left(\dfrac{\syz_R M}{x\cdot \syz_RM} \right)+1\le \rHdim_R \left(\syz_R M\right)+1\le \rHdim_R(M)+1$$, where the last inequality is \ref{hsyz}.  
\end{proof}

\begin{cor}\label{regse} Let $x_1,\cdots,x_n$ be an $R$-regular sequence. Let $\Hdim\in \{\pd,\Gdim\}$. For any $R/(x_1,\cdots,x_n)R$-module $M$, we have $\rHdim_{R/(x_1,\cdots,x_n)R}(M)\le \rHdim_R(\syz^n_RM)+n\le \rHdim_R(M)+n$.
\end{cor}  

\begin{proof} Only need to prove $\rHdim_{R/(x_1,\cdots,x_n)R}(M)\le \rHdim_R(\syz^n_RM)+n$. We do this by induction on $n$, the base case $n=1$ being true by Lemma \ref{lemgood}. Now let $n>1$ and and set $S:=R/(x_1,\cdots,x_{n-1})R$. Then $x_n$ is $S$-regular, and $R/(x_1,\cdots,x_n)R=S/x_nS$. By base case, $\rHdim_{R/(x_1,\cdots,x_n)R}(M)=\rHdim_{S/x_nS}(M)\le \rHdim_S(\syz_S M)+1$. By induction hypothesis, $\rHdim_S(\syz_S M)\le \rHdim_R(\syz^{n-1}_{R}\syz_{R/(x_1,\cdots,x_n)R}M)+n-1$. So, $\rHdim_{R/(x_1,\cdots,x_n)R}(M)\le \rHdim_R(\syz^{n-1}_{R}\syz_{R/(x_1,\cdots,x_n)R}M)+n-1+1$. By \cite[Lemma 4.2]{fiber19} we have  $\syz^{n-1}_R\syz_{R/(x_1,\cdots,x_{n-1})R}M\cong \syz^{n}_RM\oplus F$ for some free $R$-module $F$. So, $\rHdim_{R/(x_1,\cdots,x_n)R}(M)\le \rHdim_R(\syz^{n}_RM\oplus F)+n\le \rHdim_R(\syz^n_RM)+n$.  
\end{proof}

\begin{lem}\label{lemreg}
Let $R$ be a Cohen-Macaulay local ring and $M$ be a maximal Cohen-Macaulay module. For an $R$-regular sequence $\underline{x}$, one has $\rpdim_{R/(\underline{x})}(M/\underline{x}M) \le \rpdim_{R}(M)$.
\end{lem}

\begin{proof}
It suffices to show that $\rpdim_{R/(x)}(M/xM) \le \rpdim_{R}(M)$ for a $R$-regular element. We shall denote $\overline{(-)} = (-) \otimes_R R/(x)$. For a exact sequence $0 \to \syz_R M \to F \to M \to 0$ with $F$ free, we have a short exact sequence $0 \to \overline{\syz_R M} \to \overline{F} \to \overline{M} \to 0$ by tensoring with $\overline{R}$. This sequence means that there is an isomorphism $\syz_{\overline{R}}\overline{M} \cong \overline{\syz_R M}$. Using induction, we have an isomorphism $\syz_{\overline{R}}^n\overline{M}\cong\overline{\syz_R^n M}$. Tensoring reducing projective sequences of $M$ with $\overline{R}$, we obtain reducing projective sequences of $\overline{M}$ as an $\overline{R}$-module. This conclude the inequality. 
\end{proof}

Deformations of G-regular rings need not be G-regular: in fact, if $S$ is G-regular, $x\in \m$ is $S$-regular, then $S/xS$ is G-regular if and only if $x\notin \m^2$ (\cite[Proposition 4.6.]{Greg}). However, the following shows that deformations of G-regular local rings still have the desirable property that if a module has finite $\rGdim$, then it also has finite $\rpdim$

\begin{lem}\label{defo} Let $S$ be a G-regular local ring, and let $R=S/(x_1,\cdots,x_n)S$, where $x_1,\cdots,x_n$ is an $S$-regular sequence. Then, $\rpdim_R M\le n+\rpdim_S(\syz^n_SM)=n+\rGdim_S (\syz^n_SM)\le n+\rGdim_S (M)$, for every $R$-module $M$. 
\end{lem}

\begin{proof} We have $\rpdim_R(M)=\rpdim_{S/(x_1,\cdots,x_n)S}(M)\le \rpdim_S (\syz^n_SM)+n=\rGdim_S (\syz^n_SM)+n$, where the inequality is by Corollary\ref{regse} and the last equality holds since $S$ is G-regular.  
\end{proof}

\begin{prop}\label{mindef} Let $(R,\m,k)$ be a deformation of a local CM ring of minimal multiplicity, i.e. $R=S/(x_1,\cdots,x_n)S$, where $(S,\mathfrak{n},k)$ is local CM ring of minimal multiplicity, and $x_1,\cdots,x_n \in \mathfrak n$ is an $S$-regular sequence. Then, $\rpdim_R (k)<\infty$. 
\end{prop}  

\begin{proof} If $S$ is Gorenstein, then $S$ is a hypersurface, and so $R$ is a complete intersection, and there is nothing to prove. So assume $S$ is not Gorenstein. Then $S$ is G-regular. Then the claim follows by Lemma \ref{defo} and \ref{min}. 
\end{proof}

\begin{prop} Let $R$ be the deformation of a G-regular local ring i.e. $R=S/(x_1,\cdots,x_n)S$, where $S$ is a G-regular local ring and $x_1,\cdots,x_n$ is an $S$-regular sequence. Let $M$ be an $R$-module. If $\rGdim_R(M)<\infty$, then $\rpdim_R(M)<\infty$.  
\end{prop}

\begin{proof}
Let $\rGdim_{S/(x_1,\cdots,x_n)S}(M)<\infty$.  Then by Corollary\ref{nzdse}, we get $\rGdim_S(\syz^n_S M)<\infty$. We also have $\rpdim_R (M)\le n+\rGdim_S(\syz^n_SM)$ by Lemma \ref{defo}. Thus $\rpdim_R(M)<\infty$.
\end{proof}
\fi


\section*{acknowledgements}
The authors thank Jesse Cook for his comments, suggestions, and help during the preparation of this manuscript. The authors also thank Tokuji Araya for his help and discussions on earlier versions of this manuscript.



\end{document}